\def\norm#1{\|#1\|}
\newcommand{\tr}{^{\sf T}}
\newcommand{\m}[1]{{\bf{#1}}}
\newcommand{\g}[1]{\bm #1}
\newcommand{\C}[1]{{\cal {#1}}}
\newcommand{\prox}{{\rm prox}}
\renewcommand{\bar}{\overline}
\newtheorem{remark}{Remark}[section]
\newtheorem{assumption}{Assumption}[section]
\title{Convergence rates for an inexact ADMM applied to
separable convex optimization
\thanks{
January 12, 2020, revised June 1, 2020.
The authors gratefully acknowledge support by the National
Science Foundation under grants 1819002 and 1819161, and
by the Office of Naval Research under grants N00014-15-1-2048 and
N00014-18-1-2100.
}}
\author{
    William W. Hager\thanks{{\tt hager@ufl.edu},
        http://people.clas.ufl.edu/hager/,
        PO Box 118105,
        Department of Mathematics,
        University of Florida, Gainesville, FL 32611-8105.
        Phone (352) 294-2308. Fax (352) 392-8357.}
\and
    Hongchao Zhang\thanks{{\tt hozhang@math.lsu.edu},
        http://math.lsu.edu/$\sim$hozhang/,
        Department of Mathematics,
        Louisiana State University, Baton Rouge, LA 70803-4918.
        Phone (225) 578-1982. Fax (225) 578-4276.}
}
\begin{document}
\maketitle
\begin{abstract}
Convergence rates are established for an inexact accelerated
alternating direction method of multipliers (I-ADMM) for general 
separable convex optimization with a linear constraint. 
Both ergodic and non-ergodic iterates are analyzed.
Relative to the iteration number $k$, the convergence rate is $\C{O}(1/k)$
in a convex setting and $\C{O}(1/k^2)$ in a strongly convex setting.
When an error bound condition holds, the algorithm is 2-step linearly
convergent.
The I-ADMM is designed so that the accuracy of the inexact iteration
preserves the global convergence rates of the {\it exact} iteration,
leading to better numerical performance in the test problems.
\end{abstract}
\begin{keywords}
Separable convex optimization; Alternating direction method of multipliers;
ADMM; Accelerated gradient method; Inexact methods; Global convergence;
Convergence rates
\end{keywords}

\begin{AMS}
90C06, 90C25, 65Y20
\end{AMS}

\pagestyle{myheadings}
\thispagestyle{plain}
\markboth{W. W. HAGER AND H. ZHANG}
{INEXACT ADMM FOR SEPARABLE CONVEX OPTIMIZATION}
\section{Introduction}

We consider a convex, separable linearly constrained optimization problem
\begin{equation}\label{Prob}
\min \; \Phi (\m{x}) \; \mbox{ subject to } \m{Ax} = \m{b},
\end{equation}
where $\Phi : \mathbb{R}^n \rightarrow \mathbb{R}\cup\{\infty\}$ and
$\m{A}$ is $N$ by $n$.
By a separable convex problem,
we mean that the objective function is a sum of $m$ independent parts,
and the matrix is partitioned compatibly as in
\begin{equation}\label{ProbM}
\Phi(\m{x}) = \sum_{i=1}^m f_i(\m{x}_i) + h_i(\m{x}_i)
\quad \mbox{and} \quad \m{Ax} = \sum_{i =1}^m \m{A}_i \m{x}_i.
\end{equation}
Here $f_i$ is convex and Lipschitz continuously differentiable,
$h_i$ is a proper closed convex function (possibly nonsmooth), and
$\m{A}_i$ is $N$ by $n_i$ with $\sum_{i=1}^m n_i = n$.
There is no column independence assumption for the $\m{A}_i$.
Constraints of the form $\m{x}_i \in \C{X}_i$, where $\C{X}_i$ is a closed
convex set, can be incorporated in the optimization problem by
letting $h_i$ be the indicator function of $\C{X}_i$.
That is, $h_i(\m{x}_i) = \infty$ when $\m{x}_i \not\in \C{X}_i$.
The problem (\ref{Prob})--(\ref{ProbM}) has attracted extensive research 
due to its importance in areas such as
image processing, statistical learning, and compressed sensing.
See the recent survey \cite{Boyd10} and its references.

It is assumed that there exists a solution $\m{x}^*$ to
(\ref{Prob})--(\ref{ProbM}) and an associated Lagrange multiplier
$\g{\lambda}^* \in \mathbb{R}^N$ such that the following
first-order optimality conditions hold:
$\m{Ax}^* = \m{b}$ and
for $i =$ $1, 2, \ldots, m$ and for all $\m{u} \in \mathbb{R}^{n_i}$, we have
\begin{equation}\label{Wstar}
\langle \nabla f_i (\m{x}_i^*) + \m{A}_i \tr \g{\lambda}^*,
\m{u} - \m{x}_i^* \rangle + h_i(\m{u}) \ge h_i (\m{x}_i^*),
\end{equation}
where $\nabla$ denotes the gradient.

A popular strategy for solving (\ref{Prob})--(\ref{ProbM})
is the alternating direction method of multipliers (ADMM)
\cite{GM76, gl84}: For $i = 1, \ldots, m$,
\begin{equation}\label{adm}
\quad \quad \left\{
\begin{array}{lcl}
\m{x}_i^{k+1} &\in&
\arg \displaystyle{\min_{\m{x}_i \in \mathbb{R}^{n_i}}}
\; \C{L}_\rho (\m{x}_1^{k+1}, \ldots,\m{x}_{i-1}^{k+1},\m{x}_i,
\m{x}_{i+1}^k, \ldots, \m{x}_m^k , \g{\lambda}^k ), \\
\g{\lambda}^{k+1} &=&  \g{\lambda}^k + \rho
(\m{Ax}^{k+1} - \m{b}), 
\end{array}
\right.
\end{equation}
where $\rho$ is a penalty parameter and $\C{L}_\rho$ is the augmented Lagrangian
defined by
\begin{equation}\label{AL}
\C{L}_\rho (\m{x}, \g{\lambda}) = \Phi(\m{x}) +
\langle \g{\lambda}, \m{Ax} - \m{b} \rangle +
\frac{\rho}{2} \| \m{Ax} - \m{b} \|^2.
\end{equation}

Early ADMMs only consider problem (\ref{Prob})--(\ref{ProbM})
with $m=2$ corresponding to a $2$-block structure.
In this case, the global convergence and complexity
can be found in \cite{EB92, HeYuan12}.
When $m \ge 3$, the ADMM strategy (\ref{adm})
is not necessarily convergent \cite{chyy2016},
although its practical efficiency has been observed in 
many recent applications \cite{TaoYuan2011,wgy2010}.
Many recent papers, including
\cite{CaiHanYuan17,ChenLiLiuYe15,ChenShenYou13,
DavisYin15, GoldfarbMa2012, HanYuan12, HTXY2013, HeTaoXuYuan12,
LiSunToh14, LinMaZhang15},
develop modifications to ADMM to ensure convergence when $m \ge 3$.
The approach we have taken employs a back substitution step to
complement the ADMM forward substitution step.
This modification was first introduced in \cite{HTXY2013, HeTaoXuYuan12}.

Much of the CPU time in an ADMM iteration is associated with the solution
of the minimization subproblems.
If $m = 1$, then ADMM reduces to the augmented Lagrangian method,
for which the first relative error criteria
based on the residual in an iteration emanates from \cite{rf76},
while more recent work includes \cite{Eckstein13, SolSva2000}.
For $m = 2$ or larger, inexact approaches to the ADMM subproblems
have been based on an absolute summable error criterion as in
\cite{ChenSunToh2015, EB92, GolTre1979}, a combined
adaptive/absolute summable error criterion \cite{LiLiaoYuan2013},
a relative error criteria \cite{EcksteinYao17,EcksteinYao18},
proximal regularizations \cite{ChenTeboulle94,HLHY2002}, and
linearized subproblems and reduced multiplier update steps
\cite{HongLuo2017}.

The approach taken in our I-ADMM emanates from our earlier work
\cite{chy13, hyz16, HagerZhang19} on a Bregman Operator Splitting algorithm with
a variable stepsize (BOSVS) with application to image processing.
In the current paper, the penalty term in the accelerated gradient algorithm of
\cite{HagerZhang19} is linearized so as to make the solution of the
I-ADMM subproblem trivial; there is essentially no reduction in the size of the
multiplier update step.
The I-ADMM is designed so that the accuracy of the inexact solution of the
ADMM subproblems is high enough to preserve the global convergence
rates of the {\it exact} iteration.
The global convergence results for I-ADMM are similar to those presented in \cite{HagerZhang19}.
However, there is no convergence rate analysis in \cite{HagerZhang19}.
In this paper, we focus on the convergence rate of I-ADMM.
In particular, 
relative to the iteration number $k$, the convergence rate for I-ADMM
is $\C{O}(1/k)$ for ergodic iterates in the convex setting and $\C{O}(1/k^2)$
for both ergodic and nonergodic iterates in a strongly convex setting.
When an error bound condition holds, I-ADMM is 2-step linearly convergent.
These convergence rates are consistent with those obtained for
ADMM schemes that solve subproblems exactly including the $\C{O}(1/k)$
rates in \cite{HeYuan12, RMS13, ShefiTeboulle14} for ergodic iterates,
and the linear rates obtained
in \cite{HanSunZhang17} and \cite{YangHan16} for a 2-block ADMM,
and in \cite{HongLuo2017} for the multi-block case and a sufficiently
small stepsize in the multiplier update.
For a more extensive review of linear convergence results for ADMMs,
see \cite{YuanZengZhang2020}. But again, almost all the
sublinear or linear convergence rate analysis is based on either 
a single linearization step to solve the subproblem 
or the exact solution of the (proximal) subproblem.
An advantage of our inexact scheme, compared to the exact iteration,
is that the computing time to achieve a given error tolerance is reduced,
while maintaining global convergence and its rate. 


The paper is organized as follows.
Section~\ref{algorithm} gives an overview of the inexact ADMM (I-ADMM)
that will be analyzed.
Section~\ref{aBOSVS} reviews the global convergence results found in
a companion paper \cite{HagerZhang19b}.
These global convergence results are similar to those established for the
inexact ADMM of \cite{HagerZhang19}.
Section~\ref{Sublinear} establishes a $\C{O}(1/k)$ convergence rate of
for ergodic iterates, and under a strong convexity assumption,
an $\C{O}(1/k^2)$ rate for both ergodic and nonergodic iterates.
Section~\ref{linearRate} gives 2-step linear convergence results
when an error bound condition holds.
Finally, Section~\ref{numerical} shows the observed convergence in some
image recovery problems.

\subsection{Notation}
Throughout the paper, $c$ denotes a generic positive constant which is
independent of parameters such as the
iteration number $k$ or the index $i \in [1, m]$.
Let $\C{W}^*$ denote the set of solution/multiplier pairs
$(\m{x}^*,  \g{\lambda}^*)$ 
of (\ref{Prob})--(\ref{ProbM}) satisfying (\ref{Wstar}),
while $(\m{x}^*, \g{\lambda}^*) \in \C{W}^*$ is
a generic solution/multiplier pair.
$\C{L}$ (without the $\rho$ subscript) stands for $\C{L}_0$.
For $\m{x}$ and $\m{y} \in \mathbb{R}^n$,
$\langle \m{x}, \m{y} \rangle = \m{x} \tr \m{y}$ is the standard inner product,
where the superscript $\tr$ denotes transpose.
The Euclidean vector norm, denoted $\norm{\cdot}$, is defined by
$\norm{\m{x}} =\sqrt{\langle \m{x}, \m{x} \rangle}$ and 
$\norm{\m{x}}_\m{G} =\sqrt{\m{x} \tr \m{G} \m{x}}$
for a positive definite matrix $\m{G}$.
For any matrix $\m{A}$,
the matrix norm induced by the Euclidean vector norm is the largest
singular value of $\m{A}$.
For a symmetric matrix, the Euclidean norm is the largest absolute eigenvalue.
In addition, $\m{A} \succ \m{0}$ and $\m{A} \succeq \m{0}$ mean that
the matrix $\m{A}$ is positive definite and positive semidefinite, respectively.
For a differentiable function $f: \mathbb{R}^n \to \mathbb{R}$,
$\nabla f (\m{x})$ is the gradient of $f$ at $\m{x}$, a column vector.
More generally, $\partial f (\m{x})$ denotes the subdifferential at $\m{x}$.
A function $h: \mathbb{R}^n \mapsto \mathbb{R}$
is convex with modulus $\mu \ge 0$ if
\[
h ((1-\theta)\m{x} + \theta \m{y}) \le
(1-\theta) h (\m{x}) + \theta h(\m{y})
- \theta (1-\theta)(\mu/2) \|\m{x} - \m{y}\|^2
\]
for all $\m{u}$ and $\m{v} \in \mathbb{R}^{n}$ and $\theta \in [0, 1]$.
If $\mu > 0$, then $h$ is strongly convex.
The prox operator associated with $h$ is defined by
\[
\mbox{prox}_h(\m{y}) = \arg \min_{\m{x} \in \mathbb{R}^n}
\left( h(\m{x}) + \frac{1}{2} \|\m{x} - \m{y}\|^2 \right) .
\]
%

\section{Algorithm Structure}
\label{algorithm}
The structure of our I-ADMM algorithm is given in
Algorithm~\ref{ADMMcommon}.
The algorithm generates sequences
$\m{x}^k$, $\m{y}^k$, $\m{z}^k$, and $R^k$.
Both $\m{x}^k$ and $\m{z}^k$ are updated in Step~1,
$R^k$ is updated in Step~2, and $\m{y}^{k}$ is updated in Step~3.
The error is estimated in Step~2.
The matrix $\m{Q}$ in Step 3 is an $m$ by $m$ block diagonal matrix
 whose $i$-th diagonal block, denoted $\m{Q}_i$, is chosen to satisfy
the conditions:
\begin{equation} \label{Q-def}
 \m{Q}_i \succ \m{0} \quad \mbox{ and } \quad 
 \bar{\m{Q}}_i := \m{Q}_i - \m{A}_{i}\tr \m{A}_{i} \succeq \m{0}.
\end{equation}
For example, we could take $\m{Q}_i = \gamma_i \m{I}$ where
$\gamma_i \ge \|\m{A}_{i}\tr \m{A}_{i}\|$.
Condition (\ref{Q-def}) is required for showing global convergence 
of our I-ADMM. Recent studies show that for the 2-block case ($m=2$) and
an exact ADMM, the requirement that $\bar{\m{Q}}_i$ is positive semidefinite
can be relaxed
\cite{CWHLv19, HeMaYuan2020}.
The matrix $\m{M}$ in Step~3 is the $m$ by $m$
block lower triangular matrix defined by
\begin{equation}\label{m-def}
\m{M}_{ij} = \left\{ \begin{array}{cl}
\m{A}_{i}\tr \m{A}_{j} & \mbox{if }  j < i , \\
\m{Q}_{i} & \mbox{if } j = i , \\
\m{0}          & \mbox{if } j > i.
\end{array} \right.
\end{equation}
By (\ref{Q-def}), $M$ is nonsingular.
The solution $\m{y}^{k+1}$ of the block upper triangular system
$\m{M} \tr (\m{y}^{k+1} - \m{y}^k) = \alpha \m{Q}(\m{z}^{k} - \m{y}^k)$
can be obtained by back substitution.
\renewcommand\figurename{Alg.}
\begin{figure}[h]
{\tt
\begin{tabular}{l}
\hline
{\bf Parameters:}
$\rho, \, \delta_{\min}, \, \theta_i > 0$,
$\alpha \in (0, 1)$, $\sigma \in (0, 1)$ \\[.05in]
{\bf Starting guess:} $\m{x}^1$ and $\g{\lambda}^1$. \\[.05in]
{\bf Initialize:} $\m{y}^1 = \m{x}^1$, $k = 1$ and
$\Gamma_i^0 = 0$, $1 \le i \le m$, $\epsilon^0 = \infty$ \\[.05in]
\begin{tabular}{ll}
{\bf Step 1:} &  For $i=1, \dots, m$  \\[.05in]
& $\quad$ Generate $\m{x}_i^{k+1}$, $\m{z}_i^k$, 
and $r_i^k$ by Algorithm~\ref{3}. \\
& End \\[.05in]
{\bf Step 2:}  & If 
$\epsilon^k := \theta_1 \|\m{z}^{k} - \m{y}^k\| + \theta_2 
\| \m{Az}^{k} - \m{b}\| + \theta_3 \sqrt{R^k}$ is sufficiently \\[.05in]
& small, then terminate, where $R^k = \sum_{i =1}^m {r}_i^k$. \\[.1in]
{\bf Step 3:} & Find $\m{y}^{k+1}$ by solving 
$\m{Q}^{-1} \m{M} \tr (\m{y}^{k+1} - \m{y}^k) = \alpha (\m{z}^{k} - \m{y}^k)$
\\[.05in]
& $\g{\lambda}^{k+1} = \g{\lambda}^k + \alpha \rho ( \m{Az}^{k} - \m{b} )$,
where $\m{Q}$ and $\m{M}$ are defined \\[.05in]
& in (\ref{Q-def}) and (\ref{m-def}), respectively. \\[.1in]
{\bf Step 4:} & $k:=k+1$, and go to Step 1.  \\
\end{tabular}\\
\hline
\end{tabular}
}
\caption{\rm I-ADMM algorithm.}
\label{ADMMcommon} 
\end{figure}
\renewcommand\figurename{Fig.}

In Step~1 of Algorithm~\ref{ADMMcommon}, we approximate the minimizer
in the $\m{x}_i$ subproblem of the ADMM algorithm (\ref{adm}) using
the accelerated gradient method of Algorithm~\ref{3}, which is a modification 
of Algorithm~5.1 in \cite{HagerZhang19b}.
Compared with Algorithm~5.1 in  \cite{HagerZhang19b},
Algorithm~\ref{3} has
a slightly different stopping condition in Step~1b, and
a proximal term to generate $\m{u}_i^l$ in Step~1a, where
\begin{equation}\label{bik}
\m{b}_i^k = \m{b} - \sum_{j < i} \m{A}_j \m{z}_j^{k} -
\sum_{j > i} \m{A}_j \m{y}_j^k .
\end{equation}

The termination condition for Algorithm~\ref{3} appears in Step~1b.
In this step, $\psi$ is a nonnegative function for which $\psi (0) = 0$
and $\psi (s) > 0$ for $s > 0$ with $\psi$ continuous at $s = 0$.
For example, $\psi(t) = t$.
Two different ways are developed in \cite{HagerZhang19} for choosing
\renewcommand\figurename{Alg.}
\begin{figure}[h]
{\tt
\begin{tabular}{l}
\hline
{\bf Inner loop of Step 1, an accelerated gradient method: }\\[.05in]
{\bf Initialize:} $\m{a}_i^0 = \m{u}_i^0 = \m{x}_i^k$ and $\alpha^1 = 1$.
\\[.05in]
{\bf For } $l = 1, 2, \ldots $\\[.05in]
\begin{tabular}{ll}
1a. & Choose $\delta^l \ge \delta_{\min}$ and when $l > 1$, choose
$\alpha^l \in (0, 1)$ such that\\[.05in]
& $\quad f_i(\bar{\m{a}}_i^{l}) +$
$\langle \nabla f_i(\bar{\m{a}}_i^{l}), \m{a}_i^{l} -\bar{\m{a}}_i^l \rangle
+ \frac{(1-\sigma)\delta^l}{2\alpha^l} \|\m{a}_i^{l} - \bar{\m{a}}_i^l\|^2
\ge f_i(\m{a}_i^{l})$,\\[.05in]
& where $\m{a}_i^l = (1-\alpha^l)\m{a}_i^{l-1} + \alpha^l \m{u}_i^l$,
$\bar{\m{a}}_i^l =
(1-\alpha^l)\m{a}_i^{l-1} + \alpha^l \m{u}_i^{l-1}$, and \\[.05in]
& $\m{u}_i^l =
\arg \min \{ P(\m{u})
+  \frac{\rho}{2} \|\m{u} - \m{y}_i^k \|_{\bar{\m{Q}}_i}^2
+ h_i (\m{u}): \m{u} \in \mathbb{R}^{n_i} \}$
with  \\[.05in]
& \quad $P(\m{u}) = 
\langle \nabla f_i(\bar{\m{a}}_i^{l}), \m{u} \rangle
+ \frac{\delta^l}{2} \|\m{u} - \m{u}_i^{l-1}\|^2 +
\frac{\rho}{2}\|\m{A}_i\m{u} - \m{b}_i^k + \g{\lambda}^k/\rho\|^2$, \\[.05in]
& and $\m{b}_i^k$ defined in (\ref{bik}). \\[.05in]
1b. &
If $\gamma^l =$
$(1/\delta^1) \displaystyle{\prod_{j=2}^l} (1-\alpha^j)^{-1}$
$\ge \Gamma_i^{k-1} $, where $\gamma^1 = 1/\delta^1$,\\[.05in]
& and $\| \m{a}_i^l - \m{x}_i^k \|/\sqrt{\gamma^l} \le \psi (\epsilon^{k-1})$, 
then break.\\[.05in]
\end{tabular}\\
{\bf Next} \\[.05in]
1c. Set $\m{x}_i^{k+1}= \m{u}_i^l$,  $\m{z}_i^k = \m{a}_i^l$,
$\Gamma_i^k = \gamma^l$, and
$r_i^k = (1/\Gamma_i^k) \sum_{j=1}^l \|\m{u}_i^j - \m{u}_i^{j-1}\|^2$.\\[.05in]
\hline
\end{tabular}
}
\caption{Inner loop in Step~$1$ of Algorithm~$\ref{ADMMcommon}$.}
\label{3} 
\end{figure}
\renewcommand\figurename{Fig.}
the parameters $\delta^l$ and $\alpha^l$ in Step~1a.
If a Lipschitz constant $\zeta_i$ of $f_i$ is known, then we could take
\begin{equation}\label{AG_constant}
\delta^l = \frac{1}{(1-\sigma)}\frac{2 \zeta_i}{l } \quad \mbox{and}
\quad \alpha^l = \frac{2}{l+1} \in (0,1],
\end{equation}
in which case, we have
\[
\frac{(1-\sigma) \delta^l}{ \alpha^l} = \frac{(l+1) \zeta_i}{l} > \zeta_i.
\]
This relation along with a Taylor series expansion of $f_i$
around $\bar{\m{a}}_i^l$ implies that the
line search condition in Step~1a of Algorithm~\ref{3} is satisfied for each $l$.

A different, adaptive way to choose to choose $\delta^l$ and $\alpha^l$,
that does not require knowledge
of the Lipschitz constant for $f_i$, is the following:
Choose $\delta_0^l \in [\delta_{\min}, \delta_{\max}]$, where
$0 < \delta_{\min} < \delta_{\max} < \infty$ are fixed constants,
independent of $k$ and $l$, and set
\begin{eqnarray}
\delta^l &=&
\frac{2}{\theta^l + \sqrt{(\theta^l)^2 +
4 \theta^l \Lambda^{l-1}}} \quad \mbox{and}
\quad \alpha^l = \frac{1}{1 + \delta^l \Lambda^{l-1}}, \quad \mbox{where}
\label{AG_linesearch}\\
\Lambda^l &=& \sum_{i=1}^l 1/\delta^i, \quad \Lambda^0 = 0,
\quad \mbox{and}\quad
\theta^l = 1/(\delta_0^l \eta^j) \mbox{ with } \eta > 1.
\nonumber
\end{eqnarray}
Here the integer $j\ge0$ is chosen as small a possible while
satisfying the inequality in Step~1a.
It can be shown that
\begin{equation}\label{delta/alpha}
\frac{\delta^l}{\alpha^l} = \frac{1}{\theta^l} = \delta_0^l \eta^j.
\end{equation}
Since $\eta > 1$, the ratio $\delta^l/\alpha^l$ appearing in
Step~1a tends to infinity as $j$ tends to infinity;
consequently, the inequality in Step~1a is satisfied for $j$ sufficiently large.

The stopping condition in Step~1b is elucidated using the following function:
\begin{eqnarray}
\bar{L}_i^k(\m{u}) &=&
L_i^k(\m{u}) + \frac{\rho}{2}  (\m{u} - \m{y}_i^k) \tr
\bar{\m{Q}}_i (\m{u} - \m{y}_i^k), \quad \mbox{where} \label{barlik}\\
L_i^k(\m{u}) &=& f_i (\m{u}) +
h_i (\m{u}) +
\frac{\rho}{2} \|\m{A}_i \m{u} - \m{b}_i^k + \g{\lambda}^k/\rho\|^2, \nonumber
\end{eqnarray}
and $\m{b}_i^k$ is defined in (\ref{bik}).
As pointed out in Lemma~\ref{lem-AG-Conv} of the next section,
for either of the parameter choices
(\ref{AG_constant}) or (\ref{AG_linesearch}),
the iterates $\m{a}_i^l$ of Algorithm~\ref{3} converge
to the minimizer of the function $\bar{L}_i^k$ at rate
$\C{O}(1/l)$, while the objective values converge at rate $\C{O}(1/l^2)$,
which is optimal for first-order methods applied to general convex,
possibly nonsmooth optimization problems.
We let $l_i^k$ denote the terminating value of $l$ in Step~{\rm 1b}.
\begin{remark}\label{stop_condition}
For the two parameter choices $(\ref{AG_constant})$ and $(\ref{AG_linesearch})$,
it has been shown {\rm \cite[pp.~227--228]{HagerZhang19}} that in Step~{\rm 1b},
$\gamma^l \ge l^2 \Theta$ for some constant $\Theta > 0$, independent of $k$
and $l$.
Consequently, the conditions in Step~{\rm 1b} are satisfied for $l$ sufficiently
large.
\end{remark}

\section{Global Convergence}
\label{aBOSVS}
The global convergence analysis of the accelerated ADMM in this
paper with a linearized penalty term is similar to the global convergence
analysis of the accelerated scheme in \cite{HagerZhang19}.
Hence, this section simply states the main results,
while the Appendix provides the detailed analysis.
The first result concerns the convergence of the iterates in Step~1 of I-ADMM
under the assumption that the sequence
\[
\xi^l := \delta^l \alpha^l \gamma^l
\]
is nondecreasing.
For either of the parameter choices
(\ref{AG_constant}) or (\ref{AG_linesearch}), it is shown
in \cite[pp.~227--228]{HagerZhang19} that $\xi^l = 1$.

\begin{lemma}\label{lem-AG-Conv}
If the sequence $\xi^l$ is nonincreasing,
then for each $i \in [1,m]$ and $L \ge 1$, we have
\smallskip
\begin{equation}\label{AG-Converge}
\quad \quad
\rho \nu_i \| \m{a}_i^{L} - \bar{\m{x}}_i^k\|^2 
+ \frac{\mu_{h,i}}{2} \sum_{l=1}^L
\|\bar{\m{x}}_i^k-\m{a}_i^L\|^2
+ \frac{\sigma}{\gamma^{L}}
\sum_{l=1}^{L}  \xi^l \|\m{u}_i^l- \m{u}_i^{l-1}\|^2 
\le  \frac{\| \m{x}_i^k - \bar{\m{x}}_i^k\|^2}
{\gamma^{L}},
\end{equation}
where $\mu_{h,i}$ is the modulus of convexity of $h_i$,
$\nu_i>0$ is the smallest eigenvalue of $\m{Q}_i$, and
\begin{equation}\label{xbar}
\bar{\m{x}}_i^k= \arg \min \{ \bar{L}_i^k(\m{u}) : \m{u} \in \mathbb{R}^{n_i}\}.
\end{equation}
\end{lemma}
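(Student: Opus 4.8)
The idea is to treat $\bar{L}_i^k$ as a composite convex function --- the Lipschitz-differentiable part $f_i$ together with the ``simple'' part $g_i(\m{u}):=h_i(\m{u})+\frac{\rho}{2}\|\m{A}_i\m{u}-\m{b}_i^k+\g{\lambda}^k/\rho\|^2+\frac{\rho}{2}(\m{u}-\m{y}_i^k)\tr\bar{\m{Q}}_i(\m{u}-\m{y}_i^k)$, which the inner loop of Algorithm~\ref{3} handles \emph{exactly} in its prox subproblem while linearizing only $f_i$ --- and run the estimating-sequence (Tseng-type) analysis of accelerated proximal-gradient methods, adapted to the variable metric $\bar{\m{Q}}_i$ and the line-searched step sizes $\delta^l$. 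Concretely, I would derive a one-step recursion for the potential $\gamma^l\bar{L}_i^k(\m{a}_i^l)$, multiply by $\gamma^l$, and telescope.

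First I would establish, for every $\m{w}\in\mathbb{R}^{n_i}$, a one-step bound of the form
\[
\bar{L}_i^k(\m{a}_i^l)\le(1-\alpha^l)\bar{L}_i^k(\m{a}_i^{l-1})+\alpha^l\bar{L}_i^k(\m{w})+\frac{\delta^l\alpha^l}{2}\left(\|\m{u}_i^{l-1}-\m{w}\|^2-\|\m{u}_i^l-\m{w}\|^2\right)-\frac{\sigma\delta^l\alpha^l}{2}\|\m{u}_i^l-\m{u}_i^{l-1}\|^2+N_i^l(\m{w}),
\]
where $N_i^l(\m{w})\le0$ collects the curvature remainders $-\frac{\rho\nu_i+\mu_i}{2}\alpha^l\|\m{w}-\m{u}_i^l\|^2$ and $-\frac{\mu_i}{2}\alpha^l(1-\alpha^l)\|\m{a}_i^{l-1}-\m{u}_i^l\|^2$. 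The ingredients are: (i) the line-search condition of Step~1a, where $\m{a}_i^l-\bar{\m{a}}_i^l=\alpha^l(\m{u}_i^l-\m{u}_i^{l-1})$ turns its last term into $\frac{(1-\sigma)\delta^l\alpha^l}{2}\|\m{u}_i^l-\m{u}_i^{l-1}\|^2$; (ii) convexity of $f_i$, used both at $\bar{\m{a}}_i^l$ (gradient inequality) and at the auxiliary point $(1-\alpha^l)\m{a}_i^{l-1}+\alpha^l\m{w}$ (Jensen), which rewrites $\langle\nabla f_i(\bar{\m{a}}_i^l),\m{a}_i^l-\bar{\m{a}}_i^l\rangle$ in terms of $f_i$ at $\m{a}_i^{l-1}$ and $\m{w}$ plus the residual $\alpha^l\langle\nabla f_i(\bar{\m{a}}_i^l),\m{u}_i^l-\m{w}\rangle$; (iii) the first-order optimality condition of the $\m{u}_i^l$-subproblem, combined with the modulus-$\mu_i$ convexity of $h_i$ and the exact second-order identities for the two quadratics in $g_i$, the key algebraic fact being $\m{A}_i\tr\m{A}_i+\bar{\m{Q}}_i=\m{Q}_i$, so that their two curvature terms merge into $\frac{\rho}{2}\|\m{w}-\m{u}_i^l\|_{\m{Q}_i}^2\ge\frac{\rho\nu_i}{2}\|\m{w}-\m{u}_i^l\|^2$; and (iv) convexity of $g_i$ applied at $\m{a}_i^l=(1-\alpha^l)\m{a}_i^{l-1}+\alpha^l\m{u}_i^l$. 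When these are assembled the $g_i(\m{u}_i^l)$ terms cancel, $f_i$ and $g_i$ reassemble into $\bar{L}_i^k$, and the decisive point is that the $+\frac{(1-\sigma)\delta^l\alpha^l}{2}$ coming from (i), netted against the $-\frac{\delta^l\alpha^l}{2}$ produced by the prox term $\frac{\delta^l}{2}\|\m{u}-\m{u}_i^{l-1}\|^2$ of (iii), leaves precisely $-\frac{\sigma\delta^l\alpha^l}{2}$.

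Next I would set $\m{w}=\bar{\m{x}}_i^k$, multiply the recursion by $\gamma^l$, and sum over $l=1,\dots,L$. Using $\gamma^l(1-\alpha^l)=\gamma^{l-1}$ (with $\gamma^0:=0$, which is consistent because $\alpha^1=1$), $\gamma^l\alpha^l=\gamma^l-\gamma^{l-1}$, and $\delta^l\alpha^l\gamma^l=\xi^l$: the left-hand sides telescope to $\gamma^L\bar{L}_i^k(\m{a}_i^L)$; the terms $\gamma^l\alpha^l\bar{L}_i^k(\m{w})$ sum to $\gamma^L\bar{L}_i^k(\m{w})$, which moves across and leaves $\gamma^L\big(\bar{L}_i^k(\m{a}_i^L)-\bar{L}_i^k(\bar{\m{x}}_i^k)\big)$ on the left, a quantity bounded below by $\frac{\mu_i+\rho\nu_i}{2}\gamma^L\|\m{a}_i^L-\bar{\m{x}}_i^k\|^2$ because $\bar{L}_i^k$ is strongly convex (its quadratic part has curvature $\rho\m{Q}_i$) with minimizer $\bar{\m{x}}_i^k$; the sum $\sum_l\xi^l(\|\m{u}_i^{l-1}-\m{w}\|^2-\|\m{u}_i^l-\m{w}\|^2)$ is handled by Abel summation, using that $\xi^l$ is nonincreasing together with $\xi^1=1$ and $\m{u}_i^0=\m{x}_i^k$, which bounds it above by $\|\m{x}_i^k-\bar{\m{x}}_i^k\|^2$; and the $\gamma^lN_i^l(\m{w})$ terms either telescope via the scaled identity $\gamma^l\alpha^l\|\m{u}_i^l-\m{w}\|^2-\gamma^{l-1}\alpha^l\|\m{a}_i^{l-1}-\m{u}_i^l\|^2=\gamma^l\|\m{a}_i^l-\m{w}\|^2-\gamma^{l-1}\|\m{a}_i^{l-1}-\m{w}\|^2$, or are bounded below using that $\m{a}_i^L=\sum_l(\gamma^l\alpha^l/\gamma^L)\m{u}_i^l$ is a convex combination, so that $\|\m{a}_i^L-\m{w}\|^2\le\frac{1}{\gamma^L}\sum_l\gamma^l\alpha^l\|\m{u}_i^l-\m{w}\|^2$; they thereby contribute further nonnegative multiples of $\|\m{a}_i^L-\bar{\m{x}}_i^k\|^2$ and of $\sum_l\|\m{u}_i^l-\m{u}_i^{l-1}\|^2$. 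Collecting everything and dividing through by $\gamma^L$ produces (\ref{AG-Converge}).

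I expect the main obstacle to be the bookkeeping in the one-step inequality: carrying all the $\|\m{u}_i^l-\m{w}\|^2$, $\|\m{u}_i^l-\m{u}_i^{l-1}\|^2$, and $\|\m{a}_i^{l-1}-\m{u}_i^l\|^2$ contributions through (i)--(iv) with the correct signs, verifying that the variable-metric term $\bar{\m{Q}}_i$ really recombines with $\m{A}_i\tr\m{A}_i$ into $\m{Q}_i$ (the sole source of $\nu_i$), and checking that after the $\gamma^l$-scaling each assembled quantity carries exactly the advertised constant. The monotonicity hypothesis on $\xi^l$ is precisely what is needed to close the Abel-summation step; for the two concrete parameter rules (\ref{AG_constant}) and (\ref{AG_linesearch}) one has $\xi^l\equiv1$, so that step becomes automatic.
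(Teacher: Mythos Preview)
Your proposal is correct and follows essentially the same route as the paper's proof: a one-step descent inequality built from the line-search condition, convexity of $f_i$, the optimality of the prox step, and convexity of the quadratic/$h_i$ part; then multiplication by $\gamma^l$, telescoping via $\gamma^l(1-\alpha^l)=\gamma^{l-1}$, Abel summation on the $\xi^l$ terms, and Jensen's inequality on the convex-combination representation of $\m{a}_i^L$. The only cosmetic difference is that the paper carries $L_i^k$ (without the $\bar{\m{Q}}_i$ quadratic) through the recursion and converts to $\bar{L}_i^k$ at the very end---where the $\m{A}_i\tr\m{A}_i$ and $\bar{\m{Q}}_i$ terms combine into the $\m{Q}_i$-norm yielding $\rho\nu_i$---rather than packaging everything into $\bar{L}_i^k$ from the start as you do.
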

\smallskip

Since $\bar{L}_i^k$ is strongly convex, it has a unique minimizer.
The following decay property plays an important role in the
global convergence analysis.
\smallskip
\begin{lemma}\label{L-key-lemma3}
Let $(\m{x}^*, \g{\lambda}^*) \in \C{W}^*$ be any solution/multiplier
pair for $(\ref{Prob})$--$(\ref{ProbM})$, let
$\m{x}^k$, $\m{y}^k$, $\m{z}^k$, $\m{u}_{k}^l$, and $\g{\lambda}^k$
be the iterates generated by Algorithm~$\ref{ADMMcommon}$,
and define
\begin{eqnarray}
E_k &=& \rho \| \m{y}^k - \m{x}^*\|_{\m{P}}^2 +
\frac{1}{\rho} \| \g{\lambda}^k - \g{\lambda}^*\|^2 +  \alpha 
\sum_{i=1}^m  \frac{\|\m{x}_i^k - \m{x}_i^*\|^2}{\Gamma_i^k} \quad \mbox{and}
\label{def-Ek} \\
E_k^- &=& \rho \| \m{y}^k - \m{x}^*\|_{\m{P}}^2 +
\frac{1}{\rho} \| \g{\lambda}^k - \g{\lambda}^*\|^2 +  \alpha 
\sum_{i=1}^m  \frac{\|\m{x}_i^k - \m{x}_i^*\|^2}{\Gamma_i^{k-1}},
\nonumber
\end{eqnarray}
where $\m{P} = \m{MQ}^{-1}\m{M}\tr$.
If $\xi^l := \delta^l \alpha^l \gamma^l = 1$ for each $l$, then
\begin{eqnarray}
&E_k   - E_{k+1} \ge  E_k - E_{k+1}^- \ge \label{Ek-decay} \\
& \alpha \left( 2\Delta^k + \sigma R^k
+ \rho(1-\alpha) (\|\m{y}^k - \m{z}^{k}\|_{\m{Q}}^2
+ \|\m{Az}^{k} - \m{b}\|^2)
+ \sum_{i=1}^m \mu_{h,i} \|\m{z}_i^k - \m{x}_i^*\|^2 \right), &
\nonumber
\end{eqnarray}
where $R^k$ is the residual defined in Step~$2$,  $\mu_{h,i}$ is the modulus of convexity of $h_i$, 
and
\begin{equation} \label{Deltak-form}
\Delta^k = \C{L}(\m{z}^k, \g{\lambda}^*) - \Phi(\m{x}^*) \ge 0.
\end{equation}
\end{lemma}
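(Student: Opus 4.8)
My plan is to treat the two inequalities in $(\ref{Ek-decay})$ separately. The first, $E_k-E_{k+1}\ge E_k-E_{k+1}^-$, is bookkeeping: Step~1b of Algorithm~\ref{3} breaks only after $\gamma^l\ge\Gamma_i^{k-1}$, and Step~1c sets $\Gamma_i^k=\gamma^{l_i^k}$, so $\Gamma_i^k$ is nondecreasing in $k$ for every $i$; hence $\norm{\m{x}_i^{k+1}-\m{x}_i^*}^2/\Gamma_i^{k+1}\le\norm{\m{x}_i^{k+1}-\m{x}_i^*}^2/\Gamma_i^{k}$, and since $E_{k+1}$ and $E_{k+1}^-$ differ only in these factors, $E_{k+1}\le E_{k+1}^-$.

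For the second inequality I would use three ingredients. \emph{(a)} The accelerated-gradient estimate of Algorithm~\ref{3} behind Lemma~\ref{lem-AG-Conv} holds at an arbitrary reference point, not only at the minimizer $\bar{\m{x}}_i^k$: with $\xi^l\equiv1$, terminal index $L=l_i^k$, and every $\m{u}\in\mathbb{R}^{n_i}$,
\[
\Gamma_i^k\left(\bar{L}_i^k(\m{z}_i^k)-\bar{L}_i^k(\m{u})+\frac{\mu_{h,i}}{2}\norm{\m{z}_i^k-\m{u}}^2\right)+\frac{1}{2}\norm{\m{x}_i^{k+1}-\m{u}}^2+\sigma\Gamma_i^k r_i^k\le\frac{1}{2}\norm{\m{x}_i^k-\m{u}}^2 ,
\]
the strong-convexity $\mu_{h,i}$-term being analogous to the $\frac{\mu_i}{2}(\,\cdot\,)$ term in $(\ref{AG-Converge})$. \emph{(b)} The exact expansion of $\bar{L}_i^k(\m{z}_i^k)-\bar{L}_i^k(\m{x}_i^*)$ from the definition $(\ref{barlik})$ into an objective gap $(f_i+h_i)(\m{z}_i^k)-(f_i+h_i)(\m{x}_i^*)$, a penalty-quadratic gap, and a $\bar{\m{Q}}_i$-quadratic gap; the objective gaps sum, using $\m{Ax}^*=\m{b}$, to $\Delta^k-\langle\g{\lambda}^*,\m{Az}^k-\m{b}\rangle$ with $\Delta^k$ as in $(\ref{Deltak-form})$. \emph{(c)} The Step~3 updates $\m{M}\tr(\m{y}^{k+1}-\m{y}^k)=\alpha\m{Q}(\m{z}^k-\m{y}^k)$ and $\g{\lambda}^{k+1}-\g{\lambda}^k=\alpha\rho(\m{Az}^k-\m{b})$, which together with $\m{P}=\m{M}\m{Q}^{-1}\m{M}\tr$ give $\norm{\m{y}^{k+1}-\m{y}^k}_{\m{P}}^2=\alpha^2\norm{\m{z}^k-\m{y}^k}_{\m{Q}}^2$ and $\langle\m{y}^{k+1}-\m{y}^k,\m{v}\rangle_{\m{P}}=\alpha\langle\m{M}(\m{z}^k-\m{y}^k),\m{v}\rangle$ for any $\m{v}$.

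I would then apply $(a)$ at $\m{u}=\m{x}_i^*$, divide by $\Gamma_i^k$, multiply by $2\alpha$ and sum over $i$, which bounds $\alpha\sum_i(\norm{\m{x}_i^k-\m{x}_i^*}^2-\norm{\m{x}_i^{k+1}-\m{x}_i^*}^2)/\Gamma_i^k$ below by $2\alpha\sum_i(\bar{L}_i^k(\m{z}_i^k)-\bar{L}_i^k(\m{x}_i^*))+\alpha\sum_i\mu_{h,i}\norm{\m{z}_i^k-\m{x}_i^*}^2+2\alpha\sigma R^k$. Substituting $(b)$ for the $\bar{L}_i^k$ gaps, adding to the remainder of $E_k-E_{k+1}^-$ the $\m{P}$- and $\g{\lambda}$-square expansions from $(c)$, and collecting: the terms linear in $\g{\lambda}^k$ cancel; the $\m{M}$-cross term from the $\m{P}$-norm cancels against the off-diagonal $\m{A}_i\tr\m{A}_j$ groups produced by the $m$ penalty-gap expansions (this is exactly why $\m{M}$ in $(\ref{m-def})$ must be the block lower-triangular matrix built from the $\m{A}_i\tr\m{A}_j$ and $\m{Q}_i$ blocks); the $\g{\lambda}^*$-cross term joins $-\langle\g{\lambda}^*,\m{Az}^k-\m{b}\rangle$ from $(b)$; and, using $\bar{\m{Q}}_i+\m{A}_i\tr\m{A}_i=\m{Q}_i$, the surviving quadratics combine into $\alpha\rho(1-\alpha)(\norm{\m{y}^k-\m{z}^k}_{\m{Q}}^2+\norm{\m{Az}^k-\m{b}}^2)$ plus nonnegative remainders that are discarded. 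What is left is a lower bound $\alpha(2\Delta^k+\rho(1-\alpha)(\norm{\m{y}^k-\m{z}^k}_{\m{Q}}^2+\norm{\m{Az}^k-\m{b}}^2)+\sum_i\mu_{h,i}\norm{\m{z}_i^k-\m{x}_i^*}^2)+2\alpha\sigma R^k$ for $E_k-E_{k+1}^-$, which is $\ge$ the right side of $(\ref{Ek-decay})$; and $\Delta^k\ge0$ in $(\ref{Deltak-form})$ is the saddle-point inequality $\C{L}(\m{z}^k,\g{\lambda}^*)\ge\C{L}(\m{x}^*,\g{\lambda}^*)=\Phi(\m{x}^*)$ implied by $(\ref{Wstar})$ and $\m{Ax}^*=\m{b}$.

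The main obstacle is this last collection step: verifying that the $m$ off-diagonal $\m{A}_i\tr\m{A}_j$ groups telescope \emph{precisely} against the single $\m{M}$-cross term, and that, once the inexactness has been repackaged through $(a)$, the residual $R^k$ together with the decay of $\sum_i\norm{\m{x}_i^k-\m{x}_i^*}^2/\Gamma_i^k$ absorbs it so that no error term remains on the right of $(\ref{Ek-decay})$ --- which is precisely what the Step~1b termination test (forcing $\xi^l=1$ and $\gamma^l\ge\Gamma_i^{k-1}$) is engineered to deliver. Proving ingredient $(a)$ --- the accelerated estimate for $\bar{L}_i^k$ including the extra $\sigma\Gamma_i^k r_i^k$ term --- is the technical heart, but it transfers with only cosmetic changes from \cite{HagerZhang19} and is detailed in \cite{HagerZhang19b}.
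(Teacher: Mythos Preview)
Your proposal is correct and follows essentially the same route as the paper. The paper's proof starts from the general-$\m{u}$ accelerated estimate in its $L_i^k$ form (equation (\ref{generalu})) rather than the $\bar L_i^k$ form you write in (a), but these differ only by a completed square; your expansion (b) then recovers the same ingredients. The ``collection'' step you flag as the main obstacle is exactly what the paper does: after summing over $i$ the off-diagonal $\m{A}_i\tr\m{A}_j$ pieces and the diagonal $\m{Q}_i$ pieces assemble into $(\m{z}_e^k)\tr\m{M}(\m{y}^k-\m{z}^k)$, the $\m{P}$-norm expansion from Step~3 produces the matching $2\alpha(\m{y}_e^k)\tr\m{M}(\m{y}^k-\m{z}^k)-\alpha^2\|\m{y}^k-\m{z}^k\|_{\m{Q}}^2$, and the one inequality you need (your ``nonnegative remainders'') is $\m{M}+\m{M}\tr-\m{Q}\succeq\m{A}\tr\m{A}$, which holds because $\m{Q}_i\succeq\m{A}_i\tr\m{A}_i$. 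One small slip: the $\sigma$-term in your estimate (a) should carry a factor $\tfrac12$ (the sum in Step~1c already divides by $\Gamma_i^k$), which is why the final bound has $\alpha\sigma R^k$ rather than $2\alpha\sigma R^k$; this does not affect the argument.
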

\smallskip

Recall that $\C{L} = \C{L}_0$ is the ordinary Lagrangian associated with
(\ref{Prob}).
This decay property is used to obtain the following global convergence result
for I-ADMM.
\smallskip

\begin{theorem}\label{L-glob-thm3}
Suppose the parameters $\delta^l$ and $\alpha^l$
in Algorithm~$\ref{3}$ are chosen
according to either $(\ref{AG_constant})$ or $(\ref{AG_linesearch})$.
If I-ADMM performs an infinite number of iterations
generating $\m{y}^k$, $\m{z}^k$, and $\g{\lambda}^k$,
then the sequences
$\m{y}^k$ and $\m{z}^k$ both approach a common limit $\m{x}^*$,
$\g{\lambda}^k$ approaches a limit $\g{\lambda}^*$, and
$(\m{x}^*, \g{\lambda}^*) \in \C{W}^*$.
\end{theorem}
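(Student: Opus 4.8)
The plan is to run the standard "decay plus summability" argument, feeding on the monotone estimate in Lemma~\ref{L-key-lemma3}. First I would observe that the right-hand side of \eqref{Ek-decay} is a sum of nonnegative terms (using $\Delta^k \ge 0$ from \eqref{Deltak-form}, $\alpha \in (0,1)$, $\sigma > 0$, $R^k \ge 0$, $\mu_{h,i} \ge 0$, and positive (semi)definiteness of $\m{Q}$), so $E_k \ge E_{k+1}^- \ge E_{k+1}$ for every $k$. Hence $\{E_k\}$ is nonincreasing and bounded below by $0$, so it converges; in particular $E_k - E_{k+1} \to 0$, which forces each term on the right of \eqref{Ek-decay} to converge to zero. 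From $\|\m{y}^k - \m{z}^k\|_{\m{Q}}^2 \to 0$, $\|\m{Az}^k - \m{b}\|^2 \to 0$, $R^k \to 0$, and $\Delta^k \to 0$ we extract all the asymptotic feasibility and optimality information we need. Boundedness of $\{E_k\}$ also gives, via the $\m{P}$-norm term and the $\g{\lambda}$ term, that $\{\m{y}^k\}$ (modulo the nullspace of $\m{P} = \m{MQ}^{-1}\m{M}\tr$, which is trivial since $\m{M}$ is nonsingular and $\m{Q}$ positive definite) and $\{\g{\lambda}^k\}$ are bounded; the $\m{x}_i^k$ terms are bounded once we know $\Gamma_i^k$ is bounded away from $0$, which holds because $\Gamma_i^k = \gamma^{l_i^k} \ge \gamma^1 = 1/\delta^1 \ge 1/\delta_{\max} > 0$ (in the adaptive case) or is controlled analogously under \eqref{AG_constant}.

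Next I would upgrade "every convergent subsequence has a good limit" to "the whole sequence converges." Take any convergent subsequence $(\m{y}^{k_j}, \g{\lambda}^{k_j}) \to (\m{x}^\infty, \g{\lambda}^\infty)$ (guaranteed by boundedness). Since $\|\m{y}^{k_j} - \m{z}^{k_j}\|_{\m{Q}} \to 0$ and $\m{Q}$ is positive definite, $\m{z}^{k_j} \to \m{x}^\infty$ as well; then $\m{Az}^{k_j} - \m{b} \to \m{0}$ gives $\m{A}\m{x}^\infty = \m{b}$, and $\Delta^{k_j} = \C{L}(\m{z}^{k_j}, \g{\lambda}^*) - \Phi(\m{x}^*) \to 0$ combined with $\m{A}\m{x}^\infty = \m{b}$ gives $\Phi(\m{x}^\infty) = \Phi(\m{x}^*)$, so $\m{x}^\infty$ is optimal. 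To pin down the multiplier I would pass to the limit in the optimality condition characterizing $\m{u}_i^l = \m{x}_i^{k+1}$ as the prox-type minimizer defining $\bar L_i^k$ (or equivalently in the subproblem first-order condition of Algorithm~\ref{3}), using $R^k \to 0$ to control $\|\m{u}_i^l - \m{u}_i^{l-1}\|$ and hence the perturbation terms, and using $\m{y}^{k_j}, \m{z}^{k_j} \to \m{x}^\infty$ and $\g{\lambda}^{k_j} \to \g{\lambda}^\infty$; the limiting inequality is exactly \eqref{Wstar}, so $(\m{x}^\infty, \g{\lambda}^\infty) \in \C{W}^*$. Finally, apply Lemma~\ref{L-key-lemma3} again with this particular pair $(\m{x}^*, \g{\lambda}^*) := (\m{x}^\infty, \g{\lambda}^\infty) \in \C{W}^*$: the corresponding $E_k$ is nonincreasing and has a subsequence tending to $0$, hence $E_k \to 0$, which forces $\m{y}^k \to \m{x}^\infty$, $\g{\lambda}^k \to \g{\lambda}^\infty$, and then $\m{z}^k \to \m{x}^\infty$ as well.

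The main obstacle I anticipate is the limiting step for the multiplier: making rigorous the passage to the limit in the subproblem optimality condition when the subproblem data $\m{b}_i^k$ mixes $\m{z}_j^k$ for $j<i$ with $\m{y}_j^k$ for $j>i$, and the inexactness is only controlled through $R^k = \sum_i r_i^k$ with $r_i^k = (1/\Gamma_i^k)\sum_{j=1}^{l_i^k}\|\m{u}_i^j - \m{u}_i^{j-1}\|^2$. One has to argue that $R^k \to 0$ together with $\Gamma_i^k$ bounded below yields $\|\m{x}_i^{k+1} - \m{z}_i^k\| \to 0$ (or a similar distance-to-exact-solution bound, e.g. via Lemma~\ref{lem-AG-Conv} applied at the terminating index), so that asymptotically $\m{x}^{k+1}$, $\m{z}^k$, and $\m{y}^k$ share the common limit and the inexact stationarity becomes exact in the limit. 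A secondary technical point is handling the coupling in Step~3, $\m{Q}^{-1}\m{M}\tr(\m{y}^{k+1} - \m{y}^k) = \alpha(\m{z}^k - \m{y}^k)$: since $\m{z}^k - \m{y}^k \to \m{0}$ in the $\m{Q}$-norm and $\m{Q}^{-1}\m{M}\tr$ is nonsingular, $\m{y}^{k+1} - \m{y}^k \to \m{0}$, which is what lets a single accumulation point of $\{\m{y}^k\}$ propagate; this should be routine but is worth stating. Everything else — monotonicity, boundedness, and the final "Opial-type" closing argument — is the standard template and I expect no difficulty there.
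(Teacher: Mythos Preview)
Your overall template---monotone $E_k$ $\Rightarrow$ summability of the right-hand side of \eqref{Ek-decay} $\Rightarrow$ boundedness $\Rightarrow$ subsequential limit $\Rightarrow$ verify KKT $\Rightarrow$ Opial-type closing---is exactly the paper's, and your use of $\Delta^k \to 0$ together with lower semicontinuity of $\Phi$ to get primal optimality of $\m{x}^\infty$ is a clean shortcut the paper does not take.

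The genuine gap is precisely where you flag it: pinning down $\g{\lambda}^\infty$ as a valid multiplier, and (relatedly) driving the third term $\alpha \sum_i \|\m{x}_i^k - \m{x}_i^\infty\|^2/\Gamma_i^k$ of $E_k$ to zero along the subsequence so that the Opial step actually closes. Your proposed mechanism, ``$R^k \to 0$ with $\Gamma_i^k$ bounded below yields $\|\m{x}_i^{k+1}-\m{z}_i^k\|\to 0$,'' does not work in general. The nondecreasing sequence $\Gamma_i^k$ may tend to $+\infty$, and when it does, $r_i^k = (1/\Gamma_i^k)\sum_j\|\m{u}_i^j-\m{u}_i^{j-1}\|^2 \to 0$ gives no control on individual increments; $l_i^k$ is then unbounded (since $\gamma^l \ge l^2\Theta$), and nothing forces $\m{x}_i^{k+1}=\m{u}_i^{l_i^k}$, $\m{u}_i^{l_i^k-1}$, or $\bar{\m{a}}_i^{l_i^k}$ even to stay bounded, so you cannot pass to the limit in the Step~1a optimality condition.

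The paper resolves this by a case split on each $i$. If $\Gamma_i^k$ stays bounded, then $l_i^k$ is uniformly bounded, $r_i^k\to 0$ forces all inner iterates to coalesce, and your plan works verbatim. If $\Gamma_i^k\to\infty$, one abandons $\m{x}_i^{k+1}$ altogether and works with the exact minimizer $\bar{\m{x}}_i^k$ of $\bar L_i^k$: the stopping rule in Step~1b gives $\|\m{z}_i^k-\m{x}_i^k\|/\sqrt{\Gamma_i^k}\le\psi(\epsilon^{k-1})\to 0$, which together with boundedness of $\m{z}_i^k,\bar{\m{x}}_i^k$ and $\Gamma_i^k\to\infty$ feeds into Lemma~\ref{lem-AG-Conv} to yield $\|\m{z}_i^k-\bar{\m{x}}_i^k\|\to 0$. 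Strong convexity of $\bar L_i^k$ then gives Lipschitz dependence of $\bar{\m{x}}_i^k$ on the subproblem data, so the subsequential limit of $\bar{\m{x}}_i^k$ is the minimizer of the limiting problem, whose first-order condition is exactly \eqref{Wstar} at $(\m{x}^\infty,\g{\lambda}^\infty)$. The same dichotomy is what drives $\|\m{x}_i^{k}-\m{x}_i^\infty\|^2/\Gamma_i^{k}\to 0$ along the subsequence; without it your final step ``$E_{k_j}\to 0$'' is unjustified.
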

\smallskip

Theorem~\ref{L-glob-thm3} considers the case of an infinite number
of iterations.
The following lemma considers the case where $\epsilon^k = 0$ within a finite
number of iterations.
\smallskip

\begin{lemma}\label{L-stop-cond3}
If $\epsilon^{k}=0$ in Algorithm~$\ref{ADMMcommon}$,
then $\m{x}^{k+1} = \m{x}^k = \m{y}^k = \m{z}^k$ solves
$(\ref{Prob})$--$(\ref{ProbM})$ and $(\m{x}^k, \g{\lambda}^k) \in \C{W}^*$.
\end{lemma}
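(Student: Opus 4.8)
The plan is to unpack the definition of $\epsilon^k$ in Step~2 and use that $\theta_1,\theta_2,\theta_3>0$. Since each of the three summands in $\epsilon^k$ is nonnegative, $\epsilon^k=0$ forces $\m{z}^k=\m{y}^k$, $\m{Az}^k=\m{b}$, and $R^k=0$. Because $R^k=\sum_{i=1}^m r_i^k$ with each $r_i^k=(1/\Gamma_i^k)\sum_{j=1}^{l_i^k}\|\m{u}_i^j-\m{u}_i^{j-1}\|^2\ge0$ (note $\Gamma_i^k=\gamma^{l_i^k}>0$ and $l_i^k\ge1$), I conclude $\m{u}_i^j=\m{u}_i^{j-1}$ for $j=1,\ldots,l_i^k$ and every $i$; hence $\m{x}_i^{k+1}=\m{u}_i^{l_i^k}=\m{u}_i^0=\m{x}_i^k$. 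Since $\m{a}_i^0=\m{x}_i^k$ and $\m{a}_i^l=(1-\alpha^l)\m{a}_i^{l-1}+\alpha^l\m{u}_i^l$, an easy induction shows $\m{a}_i^l=\m{x}_i^k$ for $0\le l\le l_i^k$, so $\m{z}_i^k=\m{a}_i^{l_i^k}=\m{x}_i^k$. Combining these with $\m{z}^k=\m{y}^k$ yields the first claim, $\m{x}^{k+1}=\m{x}^k=\m{y}^k=\m{z}^k$, and $\m{Az}^k=\m{b}$ gives $\m{Ax}^k=\m{b}$, i.e., $\m{x}^k$ is feasible.

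Next I would establish the optimality conditions~(\ref{Wstar}) for the pair $(\m{x}^k,\g{\lambda}^k)$, using the first-order condition for the inner subproblem of Algorithm~\ref{3} that defines $\m{u}_i^{l}$, evaluated at the terminating index $l=l_i^k$:
\[
\m{0}\in\nabla f_i(\bar{\m{a}}_i^{l})+\delta^{l}(\m{u}_i^{l}-\m{u}_i^{l-1})+\rho\m{A}_i\tr(\m{A}_i\m{u}_i^{l}-\m{b}_i^k+\g{\lambda}^k/\rho)+\rho\bar{\m{Q}}_i(\m{u}_i^{l}-\m{y}_i^k)+\partial h_i(\m{u}_i^{l}).
\]
Now substitute the collapsed quantities from the first paragraph: $\m{u}_i^{l}=\m{u}_i^{l-1}=\m{x}_i^k$ annihilates the $\delta^{l}$ term; $\bar{\m{a}}_i^{l}=(1-\alpha^{l})\m{a}_i^{l-1}+\alpha^{l}\m{u}_i^{l-1}=\m{x}_i^k$, so $\nabla f_i(\bar{\m{a}}_i^{l})=\nabla f_i(\m{x}_i^k)$; $\m{y}_i^k=\m{x}_i^k$ annihilates the $\bar{\m{Q}}_i$ term; and, since $\m{z}_j^k=\m{y}_j^k=\m{x}_j^k$ for all $j$, the definition of $\m{b}_i^k$ in~(\ref{barlik}) gives $\m{b}_i^k=\m{b}-\sum_{j\ne i}\m{A}_j\m{x}_j^k$, whence $\m{A}_i\m{x}_i^k-\m{b}_i^k=\m{Ax}^k-\m{b}=\m{0}$. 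What remains is $\m{0}\in\nabla f_i(\m{x}_i^k)+\m{A}_i\tr\g{\lambda}^k+\partial h_i(\m{x}_i^k)$, i.e., $-(\nabla f_i(\m{x}_i^k)+\m{A}_i\tr\g{\lambda}^k)\in\partial h_i(\m{x}_i^k)$, which by the subgradient inequality for the convex function $h_i$ is exactly~(\ref{Wstar}) with $(\m{x}^*,\g{\lambda}^*)$ replaced by $(\m{x}^k,\g{\lambda}^k)$. Together with $\m{Ax}^k=\m{b}$, this shows $(\m{x}^k,\g{\lambda}^k)$ satisfies the first-order system, and by convexity $\m{x}^k$ solves~(\ref{Prob})--(\ref{ProbM}) and $(\m{x}^k,\g{\lambda}^k)\in\C{W}^*$.

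I do not expect a serious obstacle: the argument is essentially bookkeeping. The two points requiring care are, first, that the terminating index satisfies $l_i^k\ge1$ so that the inner-subproblem optimality condition is genuinely available and the identity $\m{u}_i^{l_i^k}=\m{u}_i^0$ is meaningful; and second, that the several auxiliary quantities feeding the subproblem (namely $\bar{\m{a}}_i^{l}$, $\m{y}_i^k$, and $\m{b}_i^k$), which are built from the $\m{a}$, $\m{u}$, $\m{y}$, and $\m{z}$ sequences, all reduce simultaneously to $\m{x}^k$ — recording the induction $\m{a}_i^l=\m{x}_i^k$ explicitly is what makes this transparent.
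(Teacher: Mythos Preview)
Your proof is correct and follows essentially the same approach as the paper: both arguments use $\epsilon^k=0$ to force $R^k=0$, $\m{z}^k=\m{y}^k$, and $\m{Az}^k=\m{b}$, deduce that all the $\m{u}_i^j$, $\m{a}_i^l$, and $\bar{\m{a}}_i^l$ collapse to $\m{x}_i^k$, and then read off the KKT condition~(\ref{Wstar}) from the first-order optimality condition of the inner subproblem after all the auxiliary terms vanish. The only cosmetic difference is that the paper phrases the last step as ``$\m{x}_i^*$ minimizes the simplified objective,'' while you write the subdifferential inclusion directly; these are equivalent.
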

\smallskip

\begin{proof}
If $\epsilon^k = 0$, then $r_i^k = 0$ for each $i$.
It follows that
\begin{equation}\label{equals}
\m{x}_i^k = \m{u}_i^0 = \m{u}_i^1 = \ldots = \m{u}_i^l.
\end{equation}
By Step~1c, $\m{u}_i^l = \m{x}_i^{k+1}$.
By the definitions
$\m{a}_i^l = (1-\alpha^l)\m{a}_i^{l-1} + \alpha^l \m{u}_i^l$ and
$\bar{\m{a}}_i^l = (1-\alpha^l)\m{a}_i^{l-1} + \alpha^l \m{u}_i^{l-1}$
where $\m{a}_i^0 = \m{u}_i^0 = \m{x}_i^k$, we have
$\m{a}_i^l = \bar{\m{a}}_i^l = \m{x}_i^k$ for each $l$ due to (\ref{equals}).
Again, by Step~1c, $\m{z}_i^k = \m{x}_i^k$.
Consequently, we have $\m{x}^{k+1} = \m{x}^k = \m{z}^k$.

Let $\m{x}^*$ denote $\m{x}^k$. Then $\m{x}^* =\m{x}^{k+1} = \m{x}^k = \m{z}^k$.
Since  $\epsilon^k = 0$, Step 2 of Algorithm~\ref{ADMMcommon} implies that 
$\m{y}^k = \m{z}^k = \m{x}^*$ and $\m{Ax}^* = \m{b}$.
Consequently, we have
\[
\m{b}_i^k = \m{b} - \sum_{j < i} \m{A}_j \m{z}_j^{k} -
\sum_{j > i} \m{A}_j \m{y}_j^k =
\m{b} - \sum_{j < i} \m{A}_j \m{x}_j^{*} -
\sum_{j > i} \m{A}_j \m{x}_j^* = \m{A}_i \m{x}_i^* .
\]
With this substitution in $P(\m{u})$ in Step~1a,
it follows that $\m{u}_i^l = \m{x}_i^*$ minimizes over
$\m{u}$ the function
\[
\langle \nabla f_i(\m{x}_i^{*}), \m{u} \rangle
+ \frac{\delta^l}{2} \|\m{u} - \m{x}_i^*\|^2 +
\frac{\rho}{2}\|\m{A}_i(\m{u} - \m{x}_i^*) + \g{\lambda}^k/\rho\|^2
+ \frac{\rho}{2} \|\m{u} - \m{x}_i^* \|_{\bar{\m{Q}}_i}^2
+ h_i (\m{u}) .
\]
The first-order optimality condition for this minimizer $\m{x}_i^*$
is the same as the first-order optimality condition
(\ref{Wstar}), but with $\g{\lambda}^*$ replaced by $\g{\lambda}^k$.
Hence, $(\m{x}^*, \g{\lambda}^k) \in \C{W}^*$.
\end{proof}

\begin{remark}
In this paper, we have focused on algorithms based on an inexact minimization
of $\bar{L}_i^k$ in Step~$1$ of Algorithm~\ref{ADMMcommon}.
In cases where $f_i$ and $h_i$ are simple enough that the exact minimizer
$\bar{\m{x}}_i^k$ of $\bar{L}_i^k$ can be quickly evaluated,
we could simply set $\m{x}_i^{k+1} = \m{z}_i^k = \bar{\m{x}}_i^k$, and
$r_i^k = 0$ in Step~$1$ of I-ADMM, and proceed to Step~2.
The global convergence results still hold.
\end{remark}

\section{Sublinear Convergence Rates}
\label{Sublinear}
In this section, sublinear convergences rates are established for I-ADMM. 
We first establish an $\C{O}(1/t)$ convergence rate for the ergodic iterates
\begin{equation} \label{ergodic-iterate}
\bar{\m{z}}^t = \frac{1}{t}\sum_{k=1}^{t}\m{z}^{k}
\end{equation}
generated by I-ADMM. 

\begin{theorem}
\label{sub1}
Let $(\m{x}^*, \g{\lambda}^*) \in \C{W}^*$ be any primal/dual solution pair
for $(\ref{Prob})$--$(\ref{ProbM})$ and let $\m{z}^k $ be generated by I-ADMM
with $\delta^l\alpha^l\gamma^l = 1$ for each $l$ and $k$.
Then, we have
\[
\C{L}(\bar{\m{z}}^t, \g{\lambda}^*) - \Phi(\m{x}^*) \le \frac{E_1}{2 \alpha t},
\]
where $\bar{\m{z}}^t$ is defined in $(\ref{ergodic-iterate})$
and $E_k$ is defined in $(\ref{def-Ek})$.
\end{theorem}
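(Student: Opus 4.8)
The plan is to combine the per-iteration decay inequality of Lemma~\ref{L-key-lemma3} with the convexity of the Lagrangian $\C{L}(\cdot,\g{\lambda}^*)$ and a telescoping sum. First I would recall from (\ref{Ek-decay}) and (\ref{Deltak-form}) that, since all the terms $\sigma R^k$, $\rho(1-\alpha)(\|\m{y}^k-\m{z}^k\|_{\m{Q}}^2 + \|\m{Az}^k-\m{b}\|^2)$, and $\sum_i \mu_{h,i}\|\m{z}_i^k - \m{x}_i^*\|^2$ are nonnegative, we may discard them to obtain the weaker but sufficient bound
\[
E_k - E_{k+1} \;\ge\; 2\alpha \Delta^k \;=\; 2\alpha\bigl(\C{L}(\m{z}^k,\g{\lambda}^*) - \Phi(\m{x}^*)\bigr).
\]
Summing this over $k = 1, \ldots, t$ telescopes the left side to $E_1 - E_{t+1} \le E_1$ (using $E_{t+1}\ge 0$, which is clear since $\m{P}$ is positive semidefinite and $\Gamma_i^{k}>0$), giving $\sum_{k=1}^t \bigl(\C{L}(\m{z}^k,\g{\lambda}^*) - \Phi(\m{x}^*)\bigr) \le E_1/(2\alpha)$.

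Next I would invoke Jensen's inequality: the function $\m{z}\mapsto \C{L}(\m{z},\g{\lambda}^*) = \Phi(\m{z}) + \langle\g{\lambda}^*, \m{Az}-\m{b}\rangle$ is convex in $\m{z}$ (being $\Phi$, which is convex, plus an affine term), so
\[
\C{L}(\bar{\m{z}}^t,\g{\lambda}^*) \;=\; \C{L}\Bigl(\frac{1}{t}\sum_{k=1}^t \m{z}^k,\;\g{\lambda}^*\Bigr)
\;\le\; \frac{1}{t}\sum_{k=1}^t \C{L}(\m{z}^k,\g{\lambda}^*).
\]
Subtracting $\Phi(\m{x}^*)$ from both sides and using the telescoped bound yields
\[
\C{L}(\bar{\m{z}}^t,\g{\lambda}^*) - \Phi(\m{x}^*)
\;\le\; \frac{1}{t}\sum_{k=1}^t \bigl(\C{L}(\m{z}^k,\g{\lambda}^*) - \Phi(\m{x}^*)\bigr)
\;\le\; \frac{E_1}{2\alpha t},
\]
which is the claimed estimate.

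There is essentially no hard part here: the theorem is a routine consequence of the decay lemma once one notices that the Lagrangian is convex in its first argument and that all the extra nonnegative terms in (\ref{Ek-decay}) can simply be thrown away. The only points requiring a word of care are (i) verifying that $E_{t+1}\ge 0$ so the telescoping sum is controlled by $E_1$ alone --- this follows because $\m{P}=\m{MQ}^{-1}\m{M}\tr$ is positive semidefinite, $\|\g{\lambda}^{t+1}-\g{\lambda}^*\|^2\ge 0$, and $\Gamma_i^{t}>0$ --- and (ii) confirming that the hypothesis $\delta^l\alpha^l\gamma^l = 1$ is exactly what Lemma~\ref{L-key-lemma3} needs in order to apply (\ref{Ek-decay}). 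If one wanted a slightly sharper statement one could retain, e.g., the $\sigma R^k$ and feasibility terms on the left to get a bound on $\sum_k \|\m{Az}^k - \m{b}\|^2$ as well, but for the stated $\C{O}(1/t)$ rate on the Lagrangian gap this is not needed.
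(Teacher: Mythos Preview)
Your proposal is correct and follows essentially the same approach as the paper's own proof: discard the nonnegative terms in (\ref{Ek-decay}) to get $E_k - E_{k+1}\ge 2\alpha\Delta^k$, telescope over $k=1,\ldots,t$, drop $E_{t+1}\ge 0$, and then apply convexity of $\C{L}(\cdot,\g{\lambda}^*)$ to pass to the ergodic average. The only cosmetic difference is that the paper phrases the last step as ``convexity of $\Phi$'' rather than Jensen's inequality, which amounts to the same thing since the affine part $\langle\g{\lambda}^*,\m{A}\cdot-\m{b}\rangle$ passes through the average exactly.
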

\begin{proof}
Discarding several nonnegative terms from (\ref{Ek-decay}),
we have 
\[
2 \alpha \Delta^k + E_{k+1} \le E_k.
\]
Adding this inequality over $k$ between 1 and $t$ yields
\[
2 \alpha \sum_{k=1}^t \Delta^k + E_{t+1} \le E_1.
\]
Hence, by the definition of $\Delta^k$ in (\ref{Deltak-form}), we have
\[
2 \alpha \sum_{k=1}^t \left[
\C{L}(\m{z}^k, \g{\lambda}^*) - \Phi(\m{x}^*) \right] \le E_1.
\]
By the convexity of $\Phi$
and the definition (\ref {ergodic-iterate}), it follows that
\[
2 \alpha t \left[
\C{L}( \bar{\m{z}}^t, \g{\lambda}^*) - \Phi(\m{x}^*) \right] \le E_1.
\]
This completes the proof.
\end{proof}

Note that the minimum of $\C{L}(\m{x}, \g{\lambda}^*)$
over $\m{x} \in \mathbb{R}^n$ is attained at $\m{x} = \m{x}^*$, and
$\C{L}(\m{x}^*, \g{\lambda}^*) = \Phi(\m{x}^*)$.
Hence, Theorem~\ref{sub1} bounds the difference between
$\C{L}(\bar{\m{z}}^t, \g{\lambda}^*)$ and
the minimum of $\C{L}(\cdot, \g{\lambda}^*)$.
We will strengthen the convergence rate to $\C{O}(1/t^2)$ when a strong
convexity assumption holds, and also obtain a convergence rate for
nonergodic iterates.
\begin{assumption}\label{strongconvex}
If $\mu_{f,i} \ge 0$ and $\mu_{h,i} \ge 0$ are the convexity moduli of 
$f_i$ and $h_i$ respectively, then
\begin{equation}\label{mufh}
\mu = \min \; \{ \mu_{f,i} + 3 \mu_{h,i} : i=1, \ldots, m \} >0.
\end{equation}
\end{assumption}

In the following theorem, we suppose that at the $k$-th iteration,
the penalty parameter $\rho$ is chosen in the following way:
\begin{equation}\label{adp-parameter}
\rho_k = (k_0 + k) \theta,
\end{equation}
where 
\begin{equation}\label{def-theta}
 \theta = \frac{ \alpha \mu}{8 \|\m{P}\|} \quad \mbox{and} \quad 
k_0 = \frac{4 \|\m{Q}^{-1/2} \m{P} \m{Q}^{-1/2}\|}{\alpha (1-\alpha)},
\end{equation}
with $\mu$ defined in Assumption \ref{strongconvex}, $\alpha \in (0,1)$ 
is the parameter in Algorithm~\ref{ADMMcommon}, and
$\m{P} = \m{MQ}^{-1}\m{M}\tr$.
We have the following theorem:
\begin{theorem}
Let $(\m{x}^*, \g{\lambda}^*) \in \C{W}^*$ be any solution/multiplier
pair for $(\ref{Prob})$--$(\ref{ProbM})$,
let $ \m{x}^k, \m{y}^k, \m{z}^k$ and $\g{\lambda}^k $
be generated by I-ADMM, and assume that
Assumption~$\ref{strongconvex}$ holds and
$\delta^l\alpha^l\gamma^l = 1$ for each $l$ and $k$.
Suppose that for every $k$, $\rho_k$ is given by $(\ref{adp-parameter})$
and $\Gamma_i^k$ satisfies 
\begin{equation} \label{Strong-Gamma}
\frac{k}{\Gamma_i^k} \ge \frac{k+1}{\Gamma_i^{k+1}},
\quad 1 \le i \le m. 
\end{equation}
Then, for all $t > 0$, we have
\begin{equation} \label{acc-rate}
\C{L}( \tilde{\m{z}}^t, \g{\lambda}^*) - \Phi(\m{x}^*) \le
\frac{2 \bar{c}}{ \alpha [t (t+1) + 2k_0 t]} 
\end{equation}
and 
\begin{equation} \label{y2-conv}
\|\m{y}^{t+1} - \m{x}^*\|^2 \le \frac{\bar{c}}{(t+k_0)^2 \theta },
\end{equation}
where
\begin{equation} \label{def-tilde-z}
\tilde {\m{z}}^t = \frac{2}{t(t+1)+ 2k_0t} \sum_{k=1}^t ((k_0+k) \m{z}^k),
\end{equation}
and 
\begin{equation}\label{cbar}
\bar{c} = \frac{1}{\theta} \| \g{\lambda}^1 - \g{\lambda}^*\|^2
+ \alpha (k_0 +1) \sum_{i=1}^m   \frac{\|\m{x}_i^1 - \m{x}_i^*\|^2}{\Gamma_i^1}
+k_0^2 \theta  \| \m{y}^1 - \m{x}^*\|_{\m{P}}^2 .
\end{equation}
\end{theorem}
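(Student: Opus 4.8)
The plan is to run the ``estimate sequence'' argument familiar from accelerated gradient methods, now for the ADMM energy, using a Lyapunov function whose one-step decrease dominates $w_k\Delta^k$ with the growing weight $w_k := k_0+k$ (so that $\rho_k = w_k\theta$). Concretely, I would set
\[
V_k := (k_0+k-1)^2\,\theta\,\norm{\m{y}^k - \m{x}^*}_{\m{P}}^2 + \frac{1}{\theta}\,\norm{\g{\lambda}^k - \g{\lambda}^*}^2 + \alpha\,w_k \sum_{i=1}^m \frac{\norm{\m{x}_i^k - \m{x}_i^*}^2}{\Gamma_i^k},
\]
and observe that $V_1 = \bar{c}$, the constant in (\ref{cbar}). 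The goal is the one-step bound $V_k - V_{k+1} \ge \alpha\,w_k\,\Delta^k$; summing it over $k=1,\dots,t$ and dropping $V_{t+1}\ge0$ yields $\alpha\sum_{k=1}^t w_k\Delta^k \le \bar{c}$. Since $\sum_{k=1}^t w_k = \frac{1}{2}[t(t+1)+2k_0t]$, the convexity of $\C{L}(\cdot,\g{\lambda}^*)$ in $\m{x}$, via Jensen applied at the weighted average $\tilde{\m{z}}^t$ of (\ref{def-tilde-z}) exactly as in the proof of Theorem~\ref{sub1}, gives (\ref{acc-rate}); retaining only the first term of $V_{t+1}\le\bar{c}$ gives $(k_0+t)^2\theta\,\norm{\m{y}^{t+1}-\m{x}^*}_{\m{P}}^2 \le \bar{c}$, hence (\ref{y2-conv}) (stated in the $\m{P}$-norm, or in the Euclidean norm when $\norm{\cdot}\le\norm{\cdot}_{\m{P}}$).

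For the one-step bound I would use (\ref{Ek-decay}) in its per-iteration form, read at iteration $k$ with the penalty $\rho=\rho_k$ in force there --- the within-iteration analysis underlying Lemma~\ref{L-key-lemma3} in \cite{HagerZhang19b} uses only that single value of $\rho$, so it transfers verbatim. Writing $E_k(\rho)$ and $E_{k+1}^{-}(\rho)$ for the quantities of (\ref{def-Ek}) with penalty $\rho$ (so $E_{k+1}^{-}$ carries $\Gamma_i^k$ in its denominators), and discarding the nonnegative terms $\sigma R^k$ and $\rho_k(1-\alpha)\norm{\m{Az}^k-\m{b}}^2$, (\ref{Ek-decay}) gives
\[
E_k(\rho_k) - E_{k+1}^{-}(\rho_k) \ge \alpha\Big(2\Delta^k + \rho_k(1-\alpha)\norm{\m{y}^k-\m{z}^k}_{\m{Q}}^2 + \sum_{i=1}^m \mu_{h,i}\norm{\m{z}_i^k-\m{x}_i^*}^2\Big).
\]
Multiplying by $w_k$ and using $\rho_k = w_k\theta$, one checks the two identities $w_k E_k(\rho_k) = V_k + (2w_k-1)\theta\,\norm{\m{y}^k-\m{x}^*}_{\m{P}}^2$ and $w_k E_{k+1}^{-}(\rho_k) \ge V_{k+1}$; the latter uses that the $\g{\lambda}$-terms match after scaling, that $(w_{k+1}-1)^2 = w_k^2$, and that $w_{k+1}/\Gamma_i^{k+1} \le w_k/\Gamma_i^k$ --- which is (\ref{Strong-Gamma}) together with the elementary fact $(k+1)/k \ge (k_0+k+1)/(k_0+k)$ for $k_0\ge0$. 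Substituting reduces the one-step bound to
\[
(2w_k-1)\,\theta\,\norm{\m{y}^k-\m{x}^*}_{\m{P}}^2 \le \alpha\,w_k\Big(\Delta^k + \rho_k(1-\alpha)\norm{\m{y}^k-\m{z}^k}_{\m{Q}}^2 + \sum_{i=1}^m \mu_{h,i}\norm{\m{z}_i^k-\m{x}_i^*}^2\Big),
\]
the remaining $\alpha w_k\Delta^k$ --- half of the $2\alpha w_k\Delta^k$ supplied by the estimate --- being exactly what must survive on the right of $V_k - V_{k+1}\ge\alpha w_k\Delta^k$.

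This last inequality is the heart of the matter, and it is where the precise values of $k_0$ and $\theta$ in (\ref{def-theta}) are used up. I would split $\m{y}^k-\m{x}^* = (\m{y}^k-\m{z}^k) + (\m{z}^k-\m{x}^*)$, apply $\norm{\m{a}+\m{b}}_{\m{P}}^2 \le 2\norm{\m{a}}_{\m{P}}^2 + 2\norm{\m{b}}_{\m{P}}^2$, and bound $\norm{\m{y}^k-\m{z}^k}_{\m{P}}^2 \le \norm{\m{Q}^{-1/2}\m{P}\m{Q}^{-1/2}}\,\norm{\m{y}^k-\m{z}^k}_{\m{Q}}^2$ and $\norm{\m{z}^k-\m{x}^*}_{\m{P}}^2 \le \norm{\m{P}}\,\norm{\m{z}^k-\m{x}^*}^2$; on the right I would use Assumption~\ref{strongconvex}, namely that $\m{x}^*$ minimizes the blockwise strongly convex function $\C{L}(\cdot,\g{\lambda}^*)$, to obtain $\Delta^k \ge \frac{1}{2}\sum_i(\mu_{f,i}+\mu_{h,i})\norm{\m{z}_i^k-\m{x}_i^*}^2$, whence $\Delta^k + \sum_i \mu_{h,i}\norm{\m{z}_i^k-\m{x}_i^*}^2 \ge \frac{\mu}{2}\norm{\m{z}^k-\m{x}^*}^2$ with $\mu$ as in (\ref{mufh}) --- this is where the factor $3$ on $\mu_{h,i}$ is needed. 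The $\norm{\m{y}^k-\m{z}^k}_{\m{Q}}^2$ terms then match because $4\norm{\m{Q}^{-1/2}\m{P}\m{Q}^{-1/2}} \le \alpha(1-\alpha)w_k$, which holds since $w_k \ge k_0+1 > k_0$ by (\ref{def-theta}); the $\norm{\m{z}^k-\m{x}^*}^2$ terms match because $4\theta\norm{\m{P}} \le \alpha\mu/2$, which is precisely the definition of $\theta$. The main obstacle I anticipate is this final bookkeeping --- choreographing the several nonnegative slack terms of (\ref{Ek-decay}) so that every piece of $(2w_k-1)\theta\norm{\m{y}^k-\m{x}^*}_{\m{P}}^2$ is accounted for --- together with the preliminary check that (\ref{Ek-decay}) really does hold with $\rho$ updated between iterations via (\ref{adp-parameter}).
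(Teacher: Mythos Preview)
Your proposal is correct and follows essentially the same route as the paper: the paper also starts from (\ref{Ek-decay}) with $\rho=\rho_k$, uses strong convexity to extract $\Delta^k\ge\frac{1}{2}\sum_i(\mu_{f,i}+\mu_{h,i})\|\m{z}_i^k-\m{x}_i^*\|^2$ so that together with the $\mu_{h,i}$ term one has $\frac{\mu}{2}\|\m{z}^k-\m{x}^*\|^2$, and then proves exactly your ``heart of the matter'' inequality (their (\ref{h88}), written as a lower bound on $\frac{\mu}{2}\|\m{z}_e^k\|^2+\rho_k(1-\alpha)\|\m{y}^k-\m{z}^k\|_{\m{Q}}^2$ rather than an upper bound on the $\|\m{y}_e^k\|_{\m{P}}^2$ surplus) before multiplying by $K=w_k$ and telescoping via $K(K-2)\le(K-1)^2$; your explicit Lyapunov function $V_k$ is just the telescoped quantity named in advance. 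Your observation that (\ref{y2-conv}) is really derived in the $\m{P}$-norm is also accurate---the paper's own (\ref{Strong-yyy}) carries $\|\m{y}^{t+1}-\m{x}^*\|_{\m{P}}^2$.
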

\begin{proof}
By Assumption~\ref{strongconvex} and the definition (\ref{Deltak-form})
of $\Delta^k$, we have
\[
\Delta^k = \C{L}(\m{z}^k, \g{\lambda}^*) - \C{L}(\m{x}^*, \g{\lambda}^*)
\ge \sum_{i=1}^m \frac{\mu_{f,i} + \mu_{h,i}}{2} \|\m{z}_i^k - \m{x}_i^*\|^2
= \sum_{i=1}^m \frac{\mu_{f,i} + \mu_{h,i}}{2} \|\m{z}_{e,i}^k \|^2,
\]
where $\m{z}_e^k = \m{z}^k - \m{x}^*$.
The inequality (\ref{Ek-decay}) of Lemma~\ref{L-key-lemma3}
relates the error in two consecutive iterations,
where the $\rho$ in (\ref{Ek-decay}) is the penalty at iteration $k$.
Combining this with the definition of $\mu$ in
Assumption~\ref{strongconvex}, we have
\begin{eqnarray}\label{eqrm}
&& \alpha \left(
\Delta^k + \frac{\mu}{2} \|\m{z}_e^k\|^2 +
\rho_k (1-\alpha) \|\m{y}^k - \m{z}^{k}\|_{\m{Q}}^2 \right) \\
& \le & 
\rho_k (\| \m{y}_e^k \|_{\m{P}}^2
- \| \m{y}_e^{k+1} \|_{\m{P}}^2) +
\frac{1}{\rho_k} (\| \g{\lambda}_e^k \|^2
- \| \g{\lambda}_e^{k+1} \|^2)
+  \alpha \sum_{i=1}^m 
\frac{\|\m{x}_{e,i}^k \|^2 - \|\m{x}_{e,i}^{k+1} \|^2}
{\Gamma_i^k}, \nonumber
\end{eqnarray}
where $\m{x}_e^k = \m{x}^k - \m{x}^*$,
$\m{y}_e^k = \m{y}^k - \m{x}^*$, and
$\g{\lambda}_e^k = \g{\lambda}^k - \g{\lambda}^*$.

For any matrix $\m{P}$, it follows from an eigendecomposition that
\[
\m{x}\tr \m{x} \ge \frac{\m{x}\tr\m{Px}}{\|\m{P}\|} \quad \mbox{and} \quad
\m{x}\tr\m{Q} \m{x} \ge \frac{\m{x}\tr\m{Px}}
{\|\m{Q}^{-1/2}\m{P}\m{Q}^{-1/2}\|}.
\]
The second inequality is deduced from the first when
$\m{x}$ is replaced by $\m{Q}^{1/2}\m{x}$ and
$\m{P}$ is replaced by $\m{Q}^{-1/2}\m{P}\m{Q}^{-1/2}$.
This yields the following lower bound for terms on the left side of
(\ref{eqrm}):
\begin{eqnarray}
\frac{\mu}{2} \|\m{z}_e^k \|^2 +
\rho_k (1-\alpha) \|\m{y}^k - \m{z}^{k}\|_{\m{Q}}^2 &\ge&
\frac{\mu}{2\|\m{P}\|} \|\m{z}_e^k\|_{\m{P}}^2 +
\frac{\rho_k (1-\alpha)}{\|\m{Q}^{-1/2}\m{P}\m{Q}^{-1/2}\|}
\|\m{y}^k - \m{z}^{k}\|_{\m{P}}^2 \nonumber \\
&\ge&
\frac{\mu}{2\|\m{P}\|} \left( \|\m{z}_e^k \|_{\m{P}}^2 +
\|\m{y}^k - \m{z}^{k}\|_{\m{P}}^2 \right) \nonumber \\
&\ge&
\frac{\mu}{2\|\m{P}\|} \left(
2\|\m{z}_e^k \|_{\m{P}}^2 + \|\m{y}_e^k\|_{\m{P}}
- 2\|\m{z}_e^k\| \|\m{y}_e^k\| \right) \nonumber \\
&\ge& 
\frac{\mu}{4\|\m{P}\|} \|\m{y}_e^k\|_{\m{P}} =
\frac{2\theta}{\alpha} \|\m{y}_e^k\|_{\m{P}}. \label{h88}
\end{eqnarray}
The second inequality is due to the special form of $\rho_k$ in
(\ref{adp-parameter}) and (\ref{def-theta}),
and the last inequality is due to the relation
\[
ab \le \frac{1}{2} \left (2a^2 + \frac{1}{2} b^2 \right) .
\]

The inequality (\ref{h88}) is incorporated in the left side of (\ref{eqrm}).
We multiply the resulting inequality by $K := k_0 + k$, substitute
$\rho_k = K\theta$, exploit the assumption (\ref{Strong-Gamma}) and
the inequality $K(K - 2) \le (K - 1)^2$ to obtain
\begin{eqnarray*}
\alpha K \Delta^k 
& \le & \theta \left( (K-1)^2 \| \m{y}_e^k \|_{\m{P}}^2 - K^2
\| \m{y}_e^{k+1}\|_{\m{P}}^2 \right) +
\frac{1}{\theta} (\| \g{\lambda}_e^k \|^2 - \|
\g{\lambda}_e^{k+1} \|^2) \\
&&  + \alpha \sum_{i=1}^m \left(
\frac{K\|\m{x}_{e,i}^k \|^2}{\Gamma_i^k}
- \frac{(K+1)\|\m{x}_{e,i}^{k+1}\|^2}{\Gamma_i^{k+1}} \right).
\end{eqnarray*}
Summing this inequality for $k$ between 1 and $t$, with $K = k_0 + k$, yields
\begin{equation} \label{Strong-yyy}
\alpha \sum_{k=1}^t (k_0 +k) \Delta^k 
 + (k_0+t)^2 \theta  \| \m{y}^{t+1} - \m{x}^*\|_{\m{P}}^2 \le \bar{c} ,
\end{equation}
where $\bar{c}$ is defined in (\ref{cbar}).
Substituting for $\Delta^k$ using (\ref{Deltak-form}) and
discarding the $\m{y}^{t+1}$ term, we have
\begin{equation} \label{Strong-zzz}
\alpha \sum_{k=1}^t  (k_0+k) \left[ \C{L}(\m{z}^k, \g{\lambda}^*)
- \Phi(\m{x}^*) \right] \le \bar{c} .
\end{equation}
The convexity of $\Phi$ and
the definition of $\tilde{\m{z}}^k$ in (\ref{def-tilde-z}) yield
\[
\C{L}(\tilde{\m{z}}^k, \g{\lambda}^*))
\le \frac{2}{t(t+1) + 2k_0 t} \sum_{k=1}^t (k_0 + k)
\C{L}(\m{z}^k, \g{\lambda}^*),
\]
which together with (\ref{Strong-zzz}) gives (\ref{acc-rate}). 
In addition, since $\Delta^k \ge 0$,
(\ref{Strong-yyy}) also implies (\ref{y2-conv}).
\end{proof}

As noted at the end of Section~\ref{algorithm},
for either of the parameter choices
(\ref{AG_constant}) or (\ref{AG_linesearch}),
$\gamma^l \ge l^2 \Theta$ for some constant $\Theta > 0$, independent of $k$
and $l$.
Hence, for $l$ sufficiently large, the requirement
(\ref{Strong-Gamma}) at iteration $k+1$ is satisfied.

\section{Linear Convergence}
\label{linearRate}
For the analysis of linear convergence rate of I-ADMM, we assume that
$\psi$ has the additional property that
$\psi(t) \le c_\psi t$ for all $t \ge 0$, where $c_\psi > 0$ is a constant.
Let us define
\begin{equation}\label{errki}
e_i(\m{y}, \g{\lambda}) =
\|\m{y}_i - \mbox{prox}_{h_i}(\m{y}_i - \nabla f_i(\m{y}_i) - \m{A}_i \tr
\g{\lambda} )\|. 
\end{equation}
We begin with the following lemma.
\begin{lemma} \label{linear-lemma}
If the parameters $\delta^l$ and $\alpha^l$
in Algorithm~$\ref{3}$ are chosen
according to either $(\ref{AG_constant})$ or $(\ref{AG_linesearch})$
and $\psi(t) \le c_\psi t$, then for any $k \ge 2$, we have
\begin{equation}\label{upbound-e}
\sum_{i=1}^m e_i(\m{y}^{k+1}, \g{\lambda}^{k+1}) \le c( d_k + d_{k-1}),
\end{equation}
where $c > 0$ is a generic constant which only depends on the problem data
and algorithm parameters such as $\rho$ and $c_\psi$ and
\begin{equation}\label{dk}
d_k = \|\m{y}^k - \m{z}^{k}\| + \|\m{Az}^{k} - \m{b}\| + \sqrt{R^k}.
\end{equation}
\end{lemma}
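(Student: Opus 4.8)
The plan is to unwind the definition of $e_i(\m{y}^{k+1},\g{\lambda}^{k+1})$ using the optimality condition for $\m{u}_i^{l}$ in Step~1a of Algorithm~\ref{3}, and then bound the resulting discrepancy by the quantities appearing in $d_k$ and $d_{k-1}$. First I would recall that $\m{x}_i^{k+1}=\m{u}_i^{l_i^k}$ is the exact minimizer of $P(\m{u})+\frac{\rho}{2}\|\m{u}-\m{y}_i^k\|_{\bar{\m{Q}}_i}^2+h_i(\m{u})$, so its first-order optimality condition reads
\[
\m{0}\in \nabla f_i(\bar{\m{a}}_i^{l}) + \delta^l(\m{u}_i^l - \m{u}_i^{l-1})
+ \rho \m{A}_i\tr(\m{A}_i\m{u}_i^l - \m{b}_i^k + \g{\lambda}^k/\rho)
+ \rho\bar{\m{Q}}_i(\m{u}_i^l - \m{y}_i^k) + \partial h_i(\m{u}_i^l),
\]
with $l = l_i^k$. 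Comparing this with the prox characterization underlying $e_i$, namely that $e_i(\m{y},\g{\lambda})=0$ exactly when $\m{0}\in \nabla f_i(\m{y}_i)+\m{A}_i\tr\g{\lambda}+\partial h_i(\m{y}_i)$, I would evaluate $e_i$ at $\m{y}^{k+1}$ (which is close to $\m{y}^k$ and $\m{z}^k$ by Step~3) and $\g{\lambda}^{k+1}=\g{\lambda}^k+\alpha\rho(\m{Az}^k-\m{b})$. Using the nonexpansiveness of $\mathrm{prox}_{h_i}$ and Lipschitz continuity of $\nabla f_i$, $e_i(\m{y}^{k+1},\g{\lambda}^{k+1})$ is bounded by a constant times the norm of the residual in the inclusion above after substituting $\m{y}^{k+1}$ for $\m{u}_i^l$ and $\g{\lambda}^{k+1}$ for the multiplier terms; the substitution errors are controlled by $\|\m{y}^{k+1}-\m{u}_i^l\|$, $\|\m{u}_i^l-\m{u}_i^{l-1}\|$, $\|\bar{\m{a}}_i^l - \m{y}_i^{k+1}\|$, $\|\m{A}_i\m{u}_i^l - \m{b}_i^k + \g{\lambda}^k/\rho - (\m{A}_i\m{y}_i^{k+1}+\g{\lambda}^{k+1}/\rho)\|$, and $\|\m{y}_i^{k+1}-\m{y}_i^k\|$.

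The next step is to control each of these pieces by $d_k$ and $d_{k-1}$. The key facts are: (i) from Step~3, $\m{M}\tr(\m{y}^{k+1}-\m{y}^k)=\alpha\m{Q}(\m{z}^k-\m{y}^k)$ and $\m{M}$ is nonsingular, so $\|\m{y}^{k+1}-\m{y}^k\|\le c\|\m{z}^k-\m{y}^k\|\le c\,d_k$; (ii) similarly $\|\m{y}_i^{k+1}-\m{z}_i^k\|$ and the difference $\m{b}_i^k - (\m{b}-\m{Az}^k) - \sum_{j>i}\m{A}_j(\m{z}_j^k-\m{y}_j^k)$-type telescoping expressions reduce to combinations of $\|\m{Az}^k-\m{b}\|$ and $\|\m{y}^k-\m{z}^k\|$, hence to $d_k$; (iii) from Step~1c and the definition $r_i^k = (1/\Gamma_i^k)\sum_{j=1}^l\|\m{u}_i^j-\m{u}_i^{j-1}\|^2$, together with $\Gamma_i^k=\gamma^l\ge c$ (since $\gamma^l\ge l^2\Theta\ge\Theta$), we get $\|\m{u}_i^l-\m{u}_i^{l-1}\|^2 \le \Gamma_i^k r_i^k \le c\,R^k$ because $R^k=\sum_i r_i^k$; thus $\delta^l\|\m{u}_i^l-\m{u}_i^{l-1}\|\le c\sqrt{R^k}$ after bounding $\delta^l$ (which is at most a constant by the parameter rules, or more carefully $\delta^l/\gamma^l$-type scaling must be checked); (iv) for the distance $\|\bar{\m{a}}_i^l-\m{y}_i^{k+1}\|$ and $\|\m{a}_i^l-\m{y}_i^{k+1}\|$, note $\m{a}_i^l=\m{z}_i^k$ so $\|\m{a}_i^l-\m{y}_i^{k+1}\|\le\|\m{z}_i^k-\m{y}_i^k\|+\|\m{y}_i^k-\m{y}_i^{k+1}\|\le c\,d_k$, and $\bar{\m{a}}_i^l$ differs from $\m{a}_i^l$ by $\alpha^l(\m{u}_i^{l-1}-\m{u}_i^l)$, already handled in (iii). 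The stopping condition in Step~1b, $\|\m{a}_i^l-\m{x}_i^k\|/\sqrt{\gamma^l}\le\psi(\epsilon^{k-1})\le c_\psi\epsilon^{k-1}$, is what forces the dependence on $d_{k-1}$: since $\epsilon^{k-1}=\theta_1\|\m{z}^{k-1}-\m{y}^{k-1}\|+\theta_2\|\m{Az}^{k-1}-\m{b}\|+\theta_3\sqrt{R^{k-1}}\le c\,d_{k-1}$, and $\|\m{x}_i^k-\m{u}_i^{l-1}\|\le\|\m{x}_i^k-\m{a}_i^l\|+\|\m{a}_i^l-\m{u}_i^{l-1}\|$ (the latter being a convex-combination remainder controlled again by the $\m{u}$-differences), the term $\|\m{u}_i^l-\m{u}_i^{l-1}\|$ inside $\delta^l(\m{u}_i^l-\m{u}_i^{l-1})$ — which also equals $\sqrt{\Gamma_i^k r_i^k - \Gamma_i^k r_i^k|_{<l}}$ — together with $\|\m{x}_i^k - \m{y}_i^k\|\le\|\m{x}_i^k-\m{a}_i^{l_i^{k-1}}\|+\cdots$ picks up the previous iteration's error. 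I would assemble all these bounds, sum over $i\in[1,m]$ (the number of blocks being a fixed constant), and absorb all problem-data and parameter-dependent factors into the generic constant $c$, yielding $\sum_i e_i(\m{y}^{k+1},\g{\lambda}^{k+1})\le c(d_k+d_{k-1})$.

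The main obstacle I anticipate is bookkeeping the scaling of $\delta^l$ and $\gamma^l$ correctly so that $\delta^l\|\m{u}_i^l-\m{u}_i^{l-1}\|$ really is $O(\sqrt{R^k})$ rather than growing with $l$: under rule (\ref{AG_constant}) we have $\delta^l=O(1/l)$ while $\gamma^l=\Theta(l^2)$, so $\delta^l\sqrt{\Gamma_i^k}=O(1/l)\cdot O(l)=O(1)$, which is fine; but the argument must be phrased so it is uniform over both parameter rules and over the (possibly large) terminal index $l_i^k$. A related subtlety is that $e_i$ uses the "unit-stepsize" prox $\mathrm{prox}_{h_i}(\m{y}_i-\nabla f_i(\m{y}_i)-\m{A}_i\tr\g{\lambda})$, whereas the optimality condition for $\m{u}_i^l$ has the coefficients $\delta^l$, $\rho$, $\rho\bar{\m{Q}}_i$ on the various quadratic terms; converting between a scaled and an unscaled prox inclusion requires using the equivalence "$\m{0}\in G(\m{w})+\partial h(\m{w}) \iff \m{w}=\mathrm{prox}_{h}(\m{w}-G(\m{w}))$ up to a Lipschitz/coercivity constant," and tracking that constant through the $\m{A}_i\tr\g{\lambda}$ versus $\rho\m{A}_i\tr(\m{A}_i\m{u}-\m{b}_i^k+\g{\lambda}^k/\rho)$ mismatch — this is where the $\|\m{Az}^k-\m{b}\|$ contributions to $d_k$ enter most directly, via $\g{\lambda}^{k+1}-\g{\lambda}^k = \alpha\rho(\m{Az}^k-\m{b})$ and $\m{b}_i^k-\m{b}+\m{A}\m{z}^k$ telescoping. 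Once those scaling issues are pinned down, the remainder is routine triangle-inequality and Lipschitz estimates.
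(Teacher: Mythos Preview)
Your outline is essentially the paper's Case~2 argument, but it does not go through when the inner iteration has run long, i.e.\ when $\Gamma_i^k$ is large. Several of the substitution errors you list --- most importantly $\|\m{y}^{k+1}-\m{u}_i^l\|$ and the $\rho\bar{\m{Q}}_i(\m{u}_i^l-\m{y}_i^k)$ term from the optimality condition --- reduce to controlling $\|\m{x}_i^{k+1}-\m{z}_i^k\|=\|\m{u}_i^{l_i^k}-\m{a}_i^{l_i^k}\|$, and this quantity is \emph{not} bounded by $c(d_k+d_{k-1})$ in general. The stopping rule in Step~1b only gives $\|\m{z}_i^k-\m{x}_i^k\|\le\sqrt{\Gamma_i^k}\,\psi(\epsilon^{k-1})$, and the same $\sqrt{\Gamma_i^k}$ factor contaminates $\|\m{x}_i^{k+1}-\m{z}_i^k\|$. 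Your item~(iii) contains a related slip: from $r_i^k=(1/\Gamma_i^k)\sum_j\|\m{u}_i^j-\m{u}_i^{j-1}\|^2$ you get $\|\m{u}_i^l-\m{u}_i^{l-1}\|^2\le\Gamma_i^k r_i^k$, not $\le cR^k$; the lower bound $\Gamma_i^k\ge\Theta$ goes the wrong way. (The specific combination $\delta^l\|\m{u}_i^l-\m{u}_i^{l-1}\|$ \emph{is} fine, since $\delta^l\sqrt{\Gamma_i^k}=\sqrt{\delta^l/\alpha^l}$ is uniformly bounded under either parameter rule, but the undamped terms $\|\m{u}_i^l-\m{y}_i^k\|$ and $\|\m{u}_i^l-\m{z}_i^k\|$ have no such saving factor.)

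The paper's missing idea is a two-case split on the size of $\Gamma_i^k$. When $\Gamma_i^k\le 4/(\rho\nu_i)$, the terminal index $l_i^k$ is uniformly bounded (since $\gamma^l\ge l^2\Theta$), and then your approach works: $\m{z}_i^k$ is a convex combination of finitely many $\m{u}_{ik}^j$, Jensen applied to $e_i(\m{z}^k,\g{\lambda}^k)$ reduces to bounding each $\|\m{u}_{ik}^l-\mathrm{prox}_{h_i}(\cdot)\|$, and all the $\|\m{u}_{ik}^l-\m{z}_i^k\|$-type terms are $\le c\sqrt{r_i^k}$ because the number of summands is bounded. When $\Gamma_i^k>4/(\rho\nu_i)$, the paper abandons $\m{u}_i^l$ altogether and pivots to the \emph{exact} minimizer $\bar{\m{x}}_i^k$ of $\bar{L}_i^k$: its first-order condition has no $\delta^l$ or $\m{u}_i^{l-1}$ terms, and the proximal-regularization term is $\rho\m{Q}_i(\bar{\m{x}}_i^k-\m{y}_i^k)$ with $\|\bar{\m{x}}_i^k-\m{y}_i^k\|\le\|\bar{\m{x}}_i^k-\m{z}_i^k\|+\|\m{z}_i^k-\m{y}_i^k\|$. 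The crucial estimate, coming from Lemma~\ref{lem-AG-Conv} combined with the stopping condition, is that \emph{large} $\Gamma_i^k$ forces $\|\m{z}_i^k-\bar{\m{x}}_i^k\|\le c\,\epsilon^{k-1}\le c\,d_{k-1}$. So the case split is not cosmetic: each regime supplies the bound the other cannot.
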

\begin{proof}
For any $\m{p}_i$ and $\m{q}_i \in \mathbb{R}^{n_i}$, $i = 1, 2,$
it follows from the triangle inequality and the nonexpansive property
of the prox operator that
\begin{eqnarray}
&& \|\m{p}_1 - \mbox{prox}_{h_i} (\m{q}_1)\| \nonumber \\
&=&
\|[\m{p}_2 - \mbox{prox}_{h_i} (\m{q}_2)] +
[\m{p}_1 - \m{p}_2]
+ [\mbox{prox}_{h_i}(\m{q}_2) - \mbox{prox}_{h_i}(\m{q}_1)]\| \nonumber \\
&\le& \|\m{p}_2 - \mbox{prox}_{h_i} (\m{q}_2)\|
+ \|\m{p}_1 - \m{p}_2\| + \|\m{q}_1 - \m{q}_2\| . \label{h89}
\end{eqnarray}
We identify $\|\m{p}_1 - \mbox{prox}_{h_i} (\m{q}_1)\|$ with
$e_i(\m{y}^{k+1}, \g{\lambda}^{k+1})$ and
$\|\m{p}_2 - \mbox{prox}_{h_i} (\m{q}_2)\|$ with
$e_i(\m{z}^{k}, \g{\lambda}^{k})$, and use (\ref{h89}) to obtain
the following bound for
$e_i(\m{y}^{k+1}, \g{\lambda}^{k+1})$ in terms of
$e_i(\m{z}^{k}, \g{\lambda}^{k})$:
\[
e_i(\m{y}^{k+1}, \g{\lambda}^{k+1}) \le
e_i(\m{z}^{k}, \g{\lambda}^{k})
+ (2 + \zeta_i)\|\m{y}_i^{k+1} - \m{z}_i^k\|
+ \|\m{A}_i\tr(\g{\lambda}^{k+1} - \g{\lambda}^k)\|,
\]
where $\zeta_i$ is the Lipschitz constant for $\nabla f_i$.
The update formula for $\g{\lambda}^{k+1}$ implies that
$\g{\lambda}^{k+1} - \g{\lambda}^k =$
$\alpha \rho (\m{A} \m{z}^k - \m{b}) =$
$\alpha \rho \m{r}_k$, where
$\m{r}_k = \m{A} \m{z}^k - \m{b}$.
With this substitution,
the bound for $e_i(\m{y}^{k+1}, \g{\lambda}_i^{k+1})$ becomes
\begin{equation}\label{1qaz}
e_i(\m{y}^{k+1}, \g{\lambda}^{k+1}) \le
e_i(\m{z}^{k}, \g{\lambda}^{k})
+ (2 + \zeta_i)\|\m{y}_i^{k+1} - \m{z}_i^k\|
+ \alpha \rho \|\m{A}_i\tr\m{r}^k\|.
\end{equation}
Let $\nu_i >0$ denote the smallest eigenvalue of $\m{Q}_i$.
The analysis is partitioned into two cases:

{\bf Case 1.} $\Gamma_i^k > 4/(\rho \nu_i)$.
Again, by property (\ref{h89}), we have
\begin{equation}\label{2wsx}
e_i(\m{z}^k, \g{\lambda}^k) \le
e_i(\bar{\m{x}}^k, \g{\lambda}^k)
+ (2 + \zeta_i)\|\m{z}_i^k - \bar{\m{x}}_i^k\|,
\end{equation}
where $\bar{\m{x}}^k$ is given in (\ref{xbar}).
The first-order optimality conditions for $\bar{\m{x}}_i^k$ can be written
\[
\bar{\m{x}}_i^k =
\mbox{prox}_{h_i}\left(\bar{\m{x}}_i^k - \nabla f_i(\bar{\m{x}}_i^k)
- \rho \m{A}_i \tr (\m{A}_i \m{y}_i^k - \m{b}_i^k + \g{\lambda}^k/\rho) 
- \rho \m{Q}_i (\bar{\m{x}}_i^k - \m{y}_i^k) \right).
\]
Using this formula for the first $\bar{\m{x}}_i^k$ on the right side
of the identity
\[
e_i(\bar{\m{x}}^k, \g{\lambda}) =
\|\bar{\m{x}}_i^k - \mbox{prox}_{h_i}(\bar{\m{x}}_i^k
- \nabla f_i(\bar{\m{x}}_i^k) - \m{A}_i \tr \g{\lambda} )\|,
\]
along with the nonexpansive property of prox operator, we have
\[
e_i(\bar{\m{x}}^k, \g{\lambda}^k) \le
\rho \left( \| \m{A}_i \tr (\m{A}_i \m{y}_i^k - \m{b}_i^k) \| +
\|\m{Q}_i (\bar{\m{x}}_i^k - \m{y}_i^k)\| \right) .
\]
The definition of $\m{b}_i^k$ yields
\begin{eqnarray*}
\m{A}_i \m{y}_i^k - \m{b}_i^k &=& \sum_{j < i} \m{A}_j \m{z}_j^k 
+ \sum_{j \ge i} \m{A}_j \m{y}_j^k - \m{b} \\
&=& \m{A} \m{z}^k - \m{b} + \sum_{j \ge i} \m{A}_j( \m{y}_j^k - \m{z}_j^k) \\
&=&  \m{r}_k  + \sum_{j \ge i} \m{A}_j (\m{y}_j^k - \m{z}_j^k) .
\end{eqnarray*}
It follows that
\begin{equation}\label{h92}
\| \m{A}_i \tr (\m{A}_i \m{y}_i^k - \m{b}_i^k) \| \le
c(\|\m{r}_k\| + \|\m{y}^k - \m{z}^k\|),
\end{equation}
and
\begin{equation} \label{eiz}
e_i(\bar{\m{x}}^k, \g{\lambda}^k) \le
c(\|\m{r}_k\| + \|\m{y}^k - \m{z}^k\| + \|\bar{\m{x}}_i^k - \m{z}_i^k\|) .
\end{equation}
Combining this with (\ref{2wsx}) gives
\[
e_i(\m{z}^k, \g{\lambda}^k)  \le
c(\|\m{r}_k\| + \|\m{y}^k - \m{z}^k\| + \|\bar{\m{x}}_i^k - \m{z}_i^k\|) .
\]

Now, by Lemma \ref{lem-AG-Conv}, we have
\begin{equation}\label{h400}
\sqrt{\rho \nu_i} \|\m{z}_i^k- \bar{\m{x}}_i^k\| \le 
\frac{\|\m{x}_i^k- \bar{\m{x}}_i^k\|}{\sqrt{\Gamma_i^k}} \\
\le  \frac{ \|\m{x}_i^k-\m{z}_i^k\|+\|\m{z}_i^k -  \bar{\m{x}}_i^k\|}{\sqrt{\Gamma_i^k}}.
\end{equation}
The stopping condition in Step~1b gives
\begin{equation} \label{stop-cond}
\frac{ \|\m{x}_i^k-\m{z}_i^k\|}{\sqrt{\Gamma_i^k}} \le \psi(\epsilon^{k-1})
\le c \epsilon^{k-1}.
\end{equation}
Hence, by (\ref{h400}) we have
\[
\left( \frac{-1 + \sqrt{\Gamma_i^{k} \rho \nu_i}} {\sqrt{\Gamma_i^{k}}} \right)
\|\m{z}_i^k- \bar{\m{x}}_i^k\|
\le  \frac{ \|\m{x}_i^k-\m{z}_i^k\|}{\sqrt{\Gamma_i^k}} 
\le  c \epsilon^{k-1}.
\]
Therefore, the Case 1 condition $\Gamma_i^{k} > 4/ (\rho \nu_i)$ implies that
\[
\|\m{z}_i^k- \bar{\m{x}}_i^k\| \le  c\epsilon^{k-1},
\]
and by (\ref{eiz}), we have
\begin{equation}\label{zi-sq-1}
e_i(\m{z}^k, \g{\lambda}^k) \le
c(\epsilon^{k-1} + \|\m{y}^k - \m{z}^k\| + \|\m{r}_k\|) .
\end{equation}

{\bf Case 2.} $\Gamma_i^k \le 4/(\rho \nu_i)$.
It is shown in \cite[pp.~227--228]{HagerZhang19} that when the
parameters $\delta^l$ and $\alpha^l$ are chosen
according to either $(\ref{AG_constant})$ or $(\ref{AG_linesearch})$,
there exists a constant $\Theta > 0$, independent of $k$ and $l$, such that
$\gamma^l \ge l^2 \Theta$.
Since the $\gamma^l$ are increasing functions of $l$ and $\Gamma_i^k$
is the final value of $\gamma^l$ in Step~1, it follows from the uniform
bound on $\Gamma_i^k$ in Case 2, and the quadratic growth in $\gamma^l$,
that the final $l$ value in Step 1, which we denote $l_i^k$, is uniformly
bounded as a function of $i$ and $k$.
Also, it follows from the quadratic growth of $\gamma^l$ and equations
(5.18) and (5.20) in \cite{HagerZhang19} that $\delta^l$ is uniformly
(in $k$, $l$, and $i$) bounded.

By the definition of $\gamma^l$ in Algorithm~\ref{3},
we have $(1-\alpha^l)\gamma^l = \gamma^{l-1}$, or equivalently,
$\alpha^l \gamma^l = \gamma^l - \gamma^{l-1}$ (with the convention that
$\gamma^0 = 0$).
Summing this identity over $l$ yields
\begin{equation}\label{gammal}
\gamma^l = \sum_{j=1}^l \alpha^j \gamma^j .
\end{equation}
Next, we multiply the definition
$\m{a}_{ik}^j = (1-\alpha^j) \m{a}_{ik}^{j-1} + \alpha^j \m{u}_{ik}^j$
by $\gamma^j$ and sum over $j$ between 1 and $l$.
Again, exploiting the identity
$(1-\alpha^j) \gamma^j = \gamma^{j-1}$ yields
\begin{equation}\label{convex1}
\m{a}_{ik}^l =
\frac{1}{\gamma^l} \sum_{j=1}^{l} (\gamma^j \alpha^j) \m{u}_{ik}^j. 
\end{equation}
It follows from (\ref{gammal}), that $\m{a}_{ik}^l$ is a convex combination
of $\m{u}_{ik}^j$, $1 \le j \le l$.
If $p_{ik}^j \in [0,1]$ denotes the coefficients in the convex combination,
we have
\begin{equation}\label{cov-a}
\m{a}_{ik}^l = \sum_{j = 1}^l p_{ik}^j \m{u}_{ik}^j,
\end{equation}
Since $\m{z}_i^k = \m{a}_{ik}^L$ for $L = l_i^k$, Jensen's inequality gives
\begin{eqnarray}
e_i(\m{z}^k, \g{\lambda}^k) &\le&
 \sum_{l = 1}^{l_i^k} p_{ik}^l 
\| \m{u}_{ik}^l - \mbox{prox}_{h_i}(\m{z}_i^k
- \nabla f_i(\m{z}_i^k) - \m{A}_i \tr \g{\lambda}^k )\| \nonumber \\
&\le& \sum_{l = 1}^{l_i^k}
\| \m{u}_{ik}^l - \mbox{prox}_{h_i}(\m{z}_i^k
- \nabla f_i(\m{z}_i^k) - \m{A}_i \tr \g{\lambda}^k )\| .
\label{4rfv}
\end{eqnarray}

Now, by the formula for $\m{u}_{ik}^l$ in Alg.~\ref{3}, we have
$\m{u}_{ik}^l = \prox_{h_i} (\m{q}_2)$, where
\[
\m{q}_2 =
\m{u}_{ik}^l - \nabla f_i (\bar{\m{a}}_{ik}^l) - \delta_{ik}^l (\m{u}_{ik}^l
- \m{u}_{ik}^{l-1} ) - \rho \m{A}_i\tr (\m{A}_i \m{y}_i^k - \m{b}_i^k
+ \g{\lambda}^k/\rho)
- \rho \m{Q}_i(\m{u}_{ik}^l - \m{y}_i^k ).
\]
We utilize (\ref{h89}) with
$\m{q}_1 = \m{z}_i^k - \nabla f_i(\m{z}_i^k) - \m{A}_i \tr \g{\lambda}^k$,
with $\m{q}_2$ as given above, and with
$\m{p}_1 = \m{p}_2 = \m{u}_{ik}^l$.
Hence, $\m{p}_2 - \mbox{prox}_{h_i} (\m{q}_{2}) = \m{0}$ and by (\ref{h89}),
it follows that
\begin{eqnarray}
\label{h91} &\| \m{u}_{ik}^l - \mbox{prox}_{h_i}(\m{z}_i^k
- \nabla f_i(\m{z}_i^k) - \m{A}_i \tr \g{\lambda}^k )\| \le
\\
&c \left( \| \m{u}_{ik}^l  -  \m{z}_i^k\| + \|\bar{\m{a}}_{ik}^l
-  \m{z}_i^k \| + \|\m{u}_{ik}^l - \m{u}_{ik}^{l-1} \|
+ \|\m{A}_i\tr (\m{A}_i \m{y}_i^k - \m{b}_i^k)\|
+ \|\m{u}_{ik}^l - \m{y}_i^k\|\right) \le& \nonumber \\
& c \left( \| \m{u}_{ik}^l - \m{z}_i^k\| + \|\bar{\m{a}}_{ik}^l
-  \m{z}_i^k \| + \|\m{u}_{ik}^l - \m{u}_{ik}^{l-1} \|
+ \|\m{A}_i\tr (\m{A}_i \m{y}_i^k - \m{b}_i^k)\|
+ \|\m{y}_i^k - \m{z}_i^k\| \right) &\nonumber
\end{eqnarray}
Each of the terms on the right side of (\ref{h91}) is now analyzed.

Based on (\ref{h92}), the trailing two terms in (\ref{h91}) have the bound
\[
\|\m{A}_i\tr (\m{A}_i \m{y}_i^k - \m{b}_i^k)\| +
\|\m{y}_i^k - \m{z}_i^k\| \le c(\|\m{r}_k\| + \|\m{y}^k - \m{z}^k\|).
\]
The remaining terms in (\ref{h91}) are bounded by $c\sqrt{r_i^k}$ as
will now be shown.
The bound
$\|\m{u}_{ik}^l - \m{u}_{ik}^{l-1} \| \le c\sqrt{r_i^k}$ is a trivial
consequence of the definition of $r_i^k$
and the uniform bound on $\Gamma_i^k$ in Case~2.
By the definition $\bar{\m{a}}_{ik}^l =$
$(1-\alpha^l) (\m{a}_{ik}^{l-1} - \m{u}_{ik}^{l-1}) + \m{u}_{ik}^{l-1}$,
it follows that
\[
\|\bar{\m{a}}_{ik}^l - \m{z}_i^k\| \le
\|\m{a}_{ik}^{l-1} - \m{u}_{ik}^{l-1}\| + \|\m{u}_{ik}^{l-1} - \m{z}_i^k\|.
\]
This inequality and the fact that
$\m{z}_i^k = \m{a}_{ik}^l$ for $l = l_i^k$ implies that all the
remaining terms in (\ref{h91}) have the form
$\|\m{a}_{ik}^l - \m{u}_{ik}^t\|$ for some $l \in [1, l_i^k]$ and some
$t \in [1, l]$.
Combine (\ref{cov-a}), Jensen's inequality,
the fact that $l \le l_i^k$ where $l_i^k$ is uniformly
bounded in Case~2, and the Schwarz inequality to obtain
\[
\| \m{a}_{ik}^l - \m{u}_{ik}^t\| \le
\sum_{j = 1}^l  \left\| \m{u}_{ik}^j - \m{u}_{ik}^t \right\| \le
l \sum_{j = 1}^l \left\| \m{u}_{ik}^j - \m{u}_{ik}^{j-1} \right\|
\le c \sqrt{r_i^k},
\]
These bounds for the terms in (\ref{h91}) combine to yield
\[
\| \m{u}_{ik}^l - \mbox{prox}_{h_i}(\m{z}_i^k
- \nabla f_i(\m{z}_i^k) - \m{A}_i \tr \g{\lambda}^k )\| \le
c\left( \|\m{r}_k\| + \|\m{y}^k - \m{z}^k\| + \sqrt{r_i^k} \right) .
\]
Moreover, by (\ref{4rfv}) and the Case~2 uniform bound on $l_i^k$, we have
\[
e_i(\m{z}^k, \g{\lambda}^k) \le
c \left( \|\m{r}_k\| + \|\m{y}^k - \m{z}^k\| + \sqrt{r_i^k} \right) .
\]
Combine this with the Case~1 lower bound (\ref{zi-sq-1}) gives
\begin{equation}\label{eizk}
e_i(\m{z}^k, \g{\lambda}^k) \le
c \left( \epsilon^{k-1} + \|\m{r}_k\| + \|\m{y}^k - \m{z}^k\|
+ \sqrt{r_i^k} \right) .
\end{equation}
Inserting this in (\ref{1qaz}) yields
\[
e_i(\m{y}^{k+1}, \g{\lambda}^{k+1}) \le
c \left( \epsilon^{k-1} + \|\m{r}_k\| + \|\m{y}^k - \m{z}^k\|
+ \sqrt{r_i^k} + \|\m{y}^{k+1} - \m{y}^k\| \right) .
\]
Based on the back substitution formula
$\m{y}^{k+1} - \m{y}^k = \alpha \m{M}^{- \sf T} \m{Q} (\m{z}^k- \m{y}^k)$,
this reduces to
\[
e_i(\m{y}^{k+1}, \g{\lambda}^{k+1}) \le
c \left( \epsilon^{k-1} + \|\m{r}_k\| + \|\m{y}^k - \m{z}^k\|
+ \sqrt{r_i^k} \right) .
\]
Since $\epsilon^{k-1} \le c d_{k-1}$ and
$\|\m{r}_k\| + \|\m{y}^k - \m{z}^k\| + \sqrt{r_i^k} \le d_k$,
the proof is complete.
\end{proof}

The expression $E_k$ defined in (\ref{def-Ek}) measures the energy
between the current iterate $(\m{x}_k, \m{y}_k, \g{\lambda}_k)$
and a given $(\m{x}^*, \m{x}^*, \g{\lambda}^*)$.
Let $E_k^*$ denote the minimum energy between the iterate
and all possible $(\m{x}^*, \g{\lambda}^*) \in \C{W}^*$.
We will show that when an error bound condition holds,
there exists a constant $\kappa < 1$ such that $E_{k+2}^* \le \kappa E_k^*$.

The error bound condition relates the KKT error to the Euclidean distance
to $\C{W}^*$.
The KKT error $K$ is given by
\begin{equation}\label{errk}
K(\m{x}, \g{\lambda})
= \|\m{A} \m{x} - \m{b}\| + \sum_{i=1}^m e_i(\m{x}, \g{\lambda}).
\end{equation}
When $K(\m{x}, \g{\lambda}) = 0$, the first-order optimality conditions hold.
The Euclidean distance from $(\m{x}, \g{\lambda})$ to $\C{W}^*$ will be
measured by
\begin{equation}\label{E}
\C{E}(\m{x}, \g{\lambda}) =
\min
\left\{\rho \|\m{x} -  \m{x}^* \|^2_{\m{P}}
+ \frac{1}{\rho} \|\g{\lambda} - \g{\lambda}^* \|^2 :
(\m{x}^*, \g{\lambda}^*) \in \C{W}^* \right\}^{1/2} .
\end{equation}
Note that $\m{P} = \m{MQ}^{-1}\m{M}\tr$ is positive definite since
$\m{M}$ is invertible.
Also, by \cite[Prop.~6.1.2]{Bertsekas2003}, every solution of (\ref{Prob})
has exactly the same set of Lagrange multipliers.
If $\m{X}^*$ and $\g{\Lambda}^*$ denote the set of solutions and multipliers
for (\ref{Prob}), then $\C{W}^* = \m{X}^* \times \g{\Lambda}^*$
is a closed, convex set, and there exists a unique
$(\tilde{\m{x}}, \tilde{\g{\lambda}}) \in \C{W}^*$ that achieves the
minimum in (\ref{E}).
The local error bound assumption is as follows:
\smallskip

\begin{assumption}\label{ass-error-bound}
There exist constants $\beta > 0$ and $\eta >0$ such that
$\C{E}(\m{x}, \g{\lambda}) \le \eta K(\m{x}, \g{\lambda})$
whenever $\C{E}(\m{x}, \g{\lambda}) \le \beta$.
\end{assumption}
\smallskip

The local error bound condition is equivalent to saying that in
a neighborhood of $\C{W}^*$, the Euclidean distance to $\C{W}^*$ is bound by
the KKT error, which is often used to analyze the linear convergence behavior
of an optimization algorithm. More recently, a partial error bound condition
based on the ADMM iterates instead of conditions
on the optimization problem is proposed in \cite{LiuYuanZengZhang2018}.
Under such conditions, linear convergence is also established for a 2-block
ADMM.

A multivalued mapping $F$ is piecewise polyhedral if its
graph $\mbox{Gph } F :=\{(\m{x}, \m{y}) : \m{y} \in F(\m{x})\}$
is a union of finitely many polyhedral sets. 
The local error bound condition (Assumption~\ref{ass-error-bound}) holds when 
$\nabla f_i$ is affine and $\partial h_i$ is piecewise polyhedral for
$i=1,\ldots,m$ \cite{HanSunZhang17, Robinson81, YangHan16}.
Note that when $(\m{x}, \g{\lambda})$ is restricted to a bounded set,
the requirement that $\C{E}(\m{x}, \g{\lambda}) \le \beta$ can be dropped.
That is, when $\C{E}(\m{x},\g{\lambda}) > \beta$,
$K(\m{x},\g{\lambda})$ is strictly positive,
and by taking the constant $\eta$ large enough, the bound
$\C{E}(\m{x}, \g{\lambda}) \le \eta K(\m{x}, \g{\lambda})$ holds over the
entire set.
In our analysis, the error bound condition is applied to the iterates
$(\m{y}^{k}, \g{\lambda}^k)$ which lie in a bounded set by
Lemma~\ref{L-key-lemma3}, so the
requirement that $\C{E}(\m{x}, \g{\lambda}) \le \beta$ is unnecessary.

\begin{theorem} \label{first-theorem}
If the parameters $\delta^l$ and $\alpha^l$
in Algorithm~$\ref{3}$ are chosen
according to either $(\ref{AG_constant})$ or $(\ref{AG_linesearch})$,
$\psi (t) \le c_{\psi} t$,
and Assumption~$\ref{ass-error-bound}$ holds,
then there exists $\kappa < 1$ such that
$E_{k+2}^* \le \kappa E_k^*$ at every iteration of Algorithm~$\ref{ADMMcommon}$.
\end{theorem}
\begin{proof}
Let $(\tilde{\m{y}}^{k+1}, \tilde{\g{\lambda}}^{k+1}) \in \C{W}^*$
be the unique minimizer in (\ref{E}) corresponding to
$(\m{x}, \g{\lambda}) =$ $({\m{y}}^{k+1}, {\g{\lambda}}^{k+1})$.
By the stopping condition in Step~1b of Algorithm~\ref{3}, and the definition
of $\Gamma_i^k$ in Step~1c, the sequence
$\Gamma_i^{k}$ is nondecreasing in $k$ by Remark~\ref{stop_condition}.
Since $\Gamma_i^{k}$ is nondecreasing in $k$,
it follows from the triangle inequality and the back substitution formula
$\m{y}^{k+1} - \m{y}^k = \alpha \m{M}^{- \sf T} \m{Q} (\m{z}^k- \m{y}^k)$
that for any $i \in [1,m]$, we have
\begin{eqnarray}
\frac{\|\m{x}_i^{k+1}-\tilde{\m{y}}_i^{k+1}\|}{\sqrt{\Gamma_i^{k+1}}} &\le&
\frac{ \| \m{x}_i^{k+1} - \m{z}_i^k\| + \| \m{z}_i^k - \m{y}_i^k\| 
+ \|\m{y}_i^k - \m{y}_i^{k+1}\|
+ \|\m{y}_i^{k+1}-\tilde{\m{y}}_i^{k+1}\|}{\sqrt{\Gamma_i^{k+1}}}
\nonumber \\
&\le& \frac{\| \m{x}_i^{k+1} - \m{z}_i^k\|}{\sqrt{\Gamma_i^k}}
+  \frac{ \| \m{z}_i^k - \m{y}_i^k\| 
+ \|\m{y}_i^k - \m{y}_i^{k+1}\|
+ \|\m{y}_i^{k+1}-\tilde{\m{y}}_i^{k+1}\|}{\sqrt{\Gamma_i^1}}
\nonumber \\
&\le& \frac{\| \m{x}_i^{k+1} - \m{z}_i^k\|}{\sqrt{\Gamma_i^k}}
+ c \left( \|\m{z}^k - \m{y}^k\| + \|\m{y}_i^{k+1}-\tilde{\m{y}}_i^{k+1}\|
\right),
\label{h100}
\end{eqnarray}
where $c >0$ denotes a generic constant, independent of $k$.

As noted earlier, when the parameters $\delta^l$ and $\alpha^l$
in Algorithm~$\ref{3}$ are chosen
according to either $(\ref{AG_constant})$ or $(\ref{AG_linesearch})$,
we have $\xi^l = \delta^l \alpha^l \gamma^l = 1$.
By equation (\ref{barlik}) with $L = l_i^k$, $\m{u} = \m{a}_i^L = \m{z}_i^k$,
$\m{u}_i^L = \m{x}^{k+1}$, and $\m{u}_i^0 = \m{x}_k$, we obtain the relation
\[
\frac{\|\m{z}_i^k - \m{x}_i^{k+1}\|}{\sqrt{\Gamma_i^k}} \le
\frac{\|\m{z}_i^k - \m{x}_i^k\|}{\sqrt{\Gamma_i^k}} \le \psi(\epsilon^{k-1}),
\]
where the last inequality is due to the stopping condition in Step~1b.
Combining this with (\ref{h100}) yields
\begin{equation}\label{h101}
\frac{\|\m{x}_i^{k+1}-\tilde{\m{y}}_i^{k+1}\|}{\sqrt{\Gamma_i^{k+1}}} \le
\psi(\epsilon^{k-1}) +
c \left( \|\m{z}^k - \m{y}^k\| + \|\m{y}_i^{k+1}-\tilde{\m{y}}_i^{k+1}\|
\right) .
\end{equation}

Exploiting the error bound condition,
we have
\begin{eqnarray}
\|\m{y}^{k+1}-\tilde{\m{y}}^{k+1}\|^2 
&\le& \sqrt{\|\m{P}^{-1}\|} \|\m{y}^{k+1} -  \tilde{\m{y}}^{k+1} \|_{\m{P}}
\label{h102} \\
&\le& c\C{E}(\m{y}^{k+1}, \g{\lambda}^{k+1}) \le
cK (\m{y}^{k+1}, \g{\lambda}^{k+1}). \nonumber
\end{eqnarray}
The constraint violation term in $K$ is estimated as follows:
\[
\|\m{Ay}^{k+1} - \m{b}\| \le \|\m{A}\|(\|\m{y}^{k+1} - \m{y}^{k}\| +
\|\m{y}^k - \m{z}^k\|) + \|\m{Az}^k - \m{b}\| \le c d_k,
\]
where the last inequality is
due to the back substitution formula and the definition (\ref{dk})
of $d_k$.
Hence, Lemma~\ref{linear-lemma} yields
\begin{equation}\label{h103}
K (\m{y}^{k+1}, \g{\lambda}^{k+1}) \le c(d_k + d_{k-1}).
\end{equation}

Combine (\ref{h101})--(\ref{h103}) to obtain
\begin{equation}\label{h104}
\frac{\|\m{x}_i^{k+1}-\tilde{\m{y}}_i^{k+1}\|}{\sqrt{\Gamma_i^{k+1}}} \le
\psi(\epsilon^{k-1}) + c(d_k + d_{k-1}) \le c(d_k + d_{k-1})
\end{equation}
since $\psi(t) \le c_{\psi} t$ and $\epsilon^{k-1} \le cd_{k-1}$.
Since the energy $E_{k+1}^*$ corresponds to the minimum of $E_{k+1}$ over all
$(\m{x}^*, \g{\lambda}^*) \in \C{W}^*$ and since
$(\tilde{\m{y}}^{k+1}, \tilde{\g{\lambda}}^{k+1}) \in \C{W}^*$, it follows that
\[
E_{k+1}^* \le
\rho \|\m{y}^{k+1} -  \tilde{\m{y}}^{k+1} \|^2_{\m{P}}
+ \frac{1}{\rho} \|\g{\lambda}^{k+1} - \tilde{\g{\lambda}}^{k+1} \|^2 + 
\alpha \sum_{i=1}^m \frac{\|\m{x}_i^{k+1} -
\tilde{\m{y}}_i^{k+1} \|^2}{\Gamma_i^{k+1}} .
\]
The first two terms on the right are $\C{E}^2(\m{y}^{k+1}, \g{\lambda}^{k+1})$,
while the last term in bounded by (\ref{h104}).
We have
\[
E_{k+1}^* \le
\C{E}^2(\m{y}^{k+1}, \g{\lambda}^{k+1})
+ c \left(d_k + d_{k-1} \right)^2 .
\]
Combine this with the error bound condition and (\ref{h103}) gives
\begin{equation}\label{Ek+1*}
E_{k+1}^* \le c \left( d_k + d_{k-1} \right)^2 .
\end{equation}

Suppose that
$(\hat{\m{x}}^k, \hat{\g{\lambda}}^k) \in \C{W}^*$ is the unique minimizing
$(\m{x}^*, \g{\lambda}^*) \in \C{W}^*$ associated with $E_k^*$.
By Lemma~$\ref{L-key-lemma3}$ and the fact that
$(\hat{\m{x}}^k, \hat{\g{\lambda}}^k) \in \C{W}^*$,
we have
\begin{eqnarray*}
E_k^* &\ge& \rho \|\m{y}^{k+1} -  \hat{\m{x}}^k \|^2_{\m{P}}
+ \frac{1}{\rho} \|\g{\lambda}^{k+1} - \hat{\g{\lambda}}^k \|^2 + 
\alpha \sum_{i=1}^m \frac{\|\m{x}_i^{k+1} - \hat{\m{x}}_i^k \|^2}{\Gamma_i^k} \\
&&  + \rho\alpha(1-\alpha)
(\|\m{y}^k - \m{z}^{k}\|_{\m{Q}}^2 + \|\m{Az}^{k} - \m{b}\|^2)
+ \sigma \alpha \sum_{i=1}^m  R^k .
\end{eqnarray*}
The first three terms on the right side are bounded from below by $E_{k+1}^*$,
while the last three terms are bounded from below by $c d_k^2$
by the definition of $d_k$ in (\ref{dk}).
Hence,
\begin{equation}\label{Ek*}
E_k^* \ge E_{k+1}^* + c d_k^2.
\end{equation}
We replace $k$ by $k-1$ and then use again
(\ref{Ek*}) followed by (\ref{Ek+1*}) to obtain
\[
E_{k-1}^* \ge E_{k}^* + c d_{k-1}^2 \ge E_{k+1}^* + c(d_k^2 + d_{k-1}^2) \ge
(1+c) E_{k+1}^*,
\]
which completes the proof.
\end{proof}

Another linear convergence result is established when
the objective $\Phi$ is strongly convex, in which case
the solution $\m{x}^*$ of (\ref{Prob}) is unique.
Our assumption is the following:
\begin{assumption}\label{ass-strongconvex}
The objective $\Phi$ is strongly convex with modulus
$\mu > 0$ and there exist constants $\beta > 0$ and $\eta >0$ such that
\begin{equation} \label{errorbound2}
\|\g{\lambda} - \tilde{\g{\lambda}}\| \le \eta
\sum_{i=1}^m \|e_i(\m{x}^*, \g{\lambda})\|
\end{equation}
whenever $\|\g{\lambda} - \tilde{\g{\lambda}}\| \le \beta$.
\end{assumption}
\smallskip

The local error bound condition (\ref{errorbound2}) holds when $\partial h_i$
is piecewise polyhedral for 
$i=1,\ldots,m$ \cite{HanSunZhang17, Robinson81, YangHan16}.
Similar to the comment before Theorem~\ref{first-theorem}, the requirement that
$\|\g{\lambda} - \tilde{\g{\lambda}}\| \le \beta$ can be dropped
since it is applied to the iterates
$\g{\lambda}^k$ which lie in a bounded set by Lemma~\ref{L-key-lemma3}.

\begin{theorem}
If the parameters $\delta^l$ and $\alpha^l$
in Algorithm~$\ref{3}$ are chosen
according to either $(\ref{AG_constant})$ or $(\ref{AG_linesearch})$,
$\psi (t) \le c_{\psi} t$,
and Assumption~$\ref{ass-strongconvex}$ holds,
then there exists $\kappa < 1$ such that
$E_{k+2}^* \le \kappa E_k^*$ at every iteration of Algorithm~$\ref{ADMMcommon}$.
\end{theorem}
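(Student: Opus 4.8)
The plan is to follow the proof of Theorem~\ref{first-theorem} almost verbatim, adjusting for the two features of Assumption~\ref{ass-strongconvex}: strong convexity of $\Phi$ makes the primal solution $\m{x}^*$ unique, so $\C{W}^* = \{\m{x}^*\} \times \g{\Lambda}^*$ and the primal component of every point of $\C{W}^*$ is fixed; and the error bound $(\ref{errorbound2})$ now controls only the multiplier component of the distance to $\C{W}^*$, so the $\m{y}$-distance to $\C{W}^*$ must be recovered by other means.

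First I would establish the lower estimate. Let $(\hat{\m{x}}^k, \hat{\g{\lambda}}^k) \in \C{W}^*$ be the minimizer defining $E_k^*$; by uniqueness $\hat{\m{x}}^k = \m{x}^*$ and $\hat{\g{\lambda}}^k$ is the projection of $\g{\lambda}^k$ onto $\g{\Lambda}^*$. Applying Lemma~\ref{L-key-lemma3} with this pair, and using that $E_{k+1}$ evaluated at it dominates $E_{k+1}^*$ while $E_k$ equals $E_k^*$, I obtain
\[
E_k^* - E_{k+1}^* \ge
\alpha\big(2\Delta^k + \sigma R^k
+ \rho(1-\alpha)(\|\m{y}^k - \m{z}^{k}\|_{\m{Q}}^2 + \|\m{Az}^{k} - \m{b}\|^2)\big).
\]
Since $\m{x}^*$ minimizes $\C{L}(\cdot, \g{\lambda}^*)$ and $\Phi$ has modulus $\mu$, we have $\Delta^k \ge \tfrac{\mu}{2}\|\m{z}^k - \m{x}^*\|^2$; combined with the definition $(\ref{dk})$ of $d_k$ this yields the analog of $(\ref{Ek*})$,
\[
E_k^* \ge E_{k+1}^* + c\big(d_k^2 + \|\m{z}^k - \m{x}^*\|^2\big).
\]

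Next I would bound $E_{k+1}^*$ from above, estimating its three groups of terms. The back-substitution formula gives $\|\m{y}^{k+1} - \m{y}^k\| \le c d_k$, hence $\|\m{y}^{k+1} - \m{x}^*\| \le c d_k + \|\m{z}^k - \m{x}^*\|$, which controls the $\rho\|\m{y}^{k+1} - \m{x}^*\|_{\m{P}}^2$ term. The argument leading to $(\ref{h104})$, with $\tilde{\m{y}}_i^{k+1}$ replaced throughout by the unique $\m{x}_i^*$ and using Lemma~\ref{lem-AG-Conv} together with the Step~1b stopping rule, gives $\|\m{x}_i^{k+1} - \m{x}_i^*\|/\sqrt{\Gamma_i^{k+1}} \le \psi(\epsilon^{k-1}) + c(\|\m{z}^k - \m{y}^k\| + \|\m{y}^{k+1} - \m{x}^*\|)$. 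For the multiplier term I translate $(\ref{errorbound2})$: by the triangle and nonexpansiveness inequality $(\ref{h89})$, $e_i(\m{x}^*, \g{\lambda}^{k+1}) \le e_i(\m{y}^{k+1}, \g{\lambda}^{k+1}) + (2 + \zeta_i)\|\m{y}_i^{k+1} - \m{x}_i^*\|$, where $\zeta_i$ is the Lipschitz constant of $\nabla f_i$, so Lemma~\ref{linear-lemma} and $(\ref{errorbound2})$ yield $\|\g{\lambda}^{k+1} - \hat{\g{\lambda}}^{k+1}\| \le c(d_k + d_{k-1}) + c\|\m{y}^{k+1} - \m{x}^*\|$. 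Assembling the three groups, using $\psi(t) \le ct$, $\epsilon^{k-1} \le c d_{k-1}$, and the bound on $\|\m{y}^{k+1} - \m{x}^*\|$ above, produces the analog of $(\ref{Ek+1*})$,
\[
E_{k+1}^* \le c\big(d_k^2 + d_{k-1}^2 + \|\m{z}^k - \m{x}^*\|^2\big).
\]
Finally, chaining exactly as at the end of Theorem~\ref{first-theorem}: shifting the lower estimate down one index and applying it twice gives $E_{k-1}^* \ge E_{k+1}^* + c(d_k^2 + d_{k-1}^2 + \|\m{z}^k - \m{x}^*\|^2)$, and the upper estimate shows the additive term dominates $c' E_{k+1}^*$ for some $c' > 0$, whence $E_{k-1}^* \ge (1+c')E_{k+1}^*$, that is, $E_{k+2}^* \le \kappa E_k^*$ with $\kappa = 1/(1+c') < 1$.

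The main obstacle I anticipate is the multiplier step. The error bound $(\ref{errorbound2})$ is stated for $e_i(\m{x}^*, \g{\lambda})$ rather than for $e_i(\m{y}^{k+1}, \g{\lambda}^{k+1})$, which is the quantity Lemma~\ref{linear-lemma} controls, so the estimate must be routed through $\|\m{y}^{k+1} - \m{x}^*\|$; this inevitably reintroduces a term of order $\|\m{z}^k - \m{x}^*\|^2$ into the upper bound for $E_{k+1}^*$, and the argument only closes because that same quantity appears in the lower estimate via $2\Delta^k$ thanks to strong convexity. Tracking this cancellation, which has no counterpart in Theorem~\ref{first-theorem}, is the delicate point; everything else is a routine transcription of the earlier proof.
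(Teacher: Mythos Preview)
Your proposal is correct and follows essentially the same architecture as the paper: the lower estimate $E_k^* \ge E_{k+1}^* + c(d_k^2 + \|\m{z}^k - \m{x}^*\|^2)$ via Lemma~\ref{L-key-lemma3} and strong convexity, the upper estimate $E_{k+1}^* \le c(d_k^2 + d_{k-1}^2 + \|\m{z}^k - \m{x}^*\|^2)$, and the two-step chaining are all as in the paper. The only cosmetic difference is in the multiplier bound: you route $e_i(\m{x}^*,\g{\lambda}^{k+1})$ through $e_i(\m{y}^{k+1},\g{\lambda}^{k+1})$ and invoke Lemma~\ref{linear-lemma}, whereas the paper routes it directly through $e_i(\m{z}^k,\g{\lambda}^k)$ and quotes the intermediate estimate~(\ref{eizk}) from that lemma's proof; both paths yield the same bound $\|\g{\lambda}^{k+1} - \tilde{\g{\lambda}}^{k+1}\| \le c(d_k + d_{k-1} + \|\m{z}^k - \m{x}^*\|)$.
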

\smallskip

\begin{proof}
By the local error bound condition and by (\ref{h89}) with
$\m{p}_1 - \mbox{prox}_{h_i} (\m{q}_1)$ identified with
$e_i(\m{x}^{*}, \g{\lambda}^{k+1})$ and
$\m{p}_2 - \mbox{prox}_{h_i} (\m{q}_2)$ identified with
$e_i(\m{z}^{k}, \g{\lambda}^{k})$, we have
\begin{eqnarray}
\|\g{\lambda}^{k+1} - \tilde{\g{\lambda}}^{k+1}\| &\le&
\eta \sum_{i=1}^m e_i(\m{x}^*, \g{\lambda}^{k+1}) \label{lll-bound} \\
&\le& c \left( \|\m{z}^k - \m{x}^*\|
+ \|\g{\lambda}^{k+1} - \g{\lambda}^k\|
+ \sum_{i=1}^m e_i(\m{z}^k, \g{\lambda}^k) \right), \nonumber
\end{eqnarray}
where $c >0$ is a constant. In the later proof, we again use $c >0$ as a generic constant.
By (\ref{eizk}), it follows that
\[
\sum_{i=1}^m e_i(\m{z}^k, \g{\lambda}^k) \le
c \left( \epsilon^{k-1} + \|\m{r}_k\| + \|\m{y}^k - \m{z}^k\|
+ \sqrt{R^k} \right) .
\]
Inserting this in (\ref{lll-bound}) and recalling that
$\g{\lambda}^{k+1} -\g{\lambda}^k =$ $\alpha \rho ( \m{Az}^{k} - \m{b}) =$
$\alpha\rho \m{r}_k$, we have
\[
\|\g{\lambda}^{k+1} - \tilde{\g{\lambda}}^{k+1}\| \le
c \left( \epsilon^{k-1} + \|\m{z}^k - \m{x}^*\| +
\|\m{r}_k\| + \|\m{y}^k - \m{z}^k\| + \sqrt{R^k} \right) .
\]
Since $\epsilon^{k-1} \le c d_{k-1}$ and
$\|\m{r}_k\| + \|\m{y}^k - \m{z}^k\| + \sqrt{R^k} \le d_k$,
it follows that
\begin{equation}\label{h200}
\|\g{\lambda}^{k+1} - \tilde{\g{\lambda}}^{k+1}\| \le
c( d_k + d_{k-1} + \|\m{z}^k - \m{x}^*\|).
\end{equation}

By (\ref{h101}) with $\tilde{\m{y}}^{k+1} = \m{x}^*$, we have
\begin{equation}\label{h201}
\frac{\|\m{x}_i^{k+1}-\m{x}_i^{*}\|}{\sqrt{\Gamma_i^{k+1}}} \le
c\left( \epsilon^{k-1} +
\|\m{z}^k - \m{y}^k\| + \|\m{y}^{k+1}-{\m{x}}^{*}\| \right) .
\end{equation}
The triangle inequality and the back substitution formula yield
\begin{eqnarray}
\|\m{y}^{k+1} - \m{x}^*\| &\le&
\|\m{y}^{k+1} - \m{y}^k\| + \|\m{y}^{k} - \m{z}^k\| +
\|\m{z}^{k} - \m{x}^*\| \label{h202} \\
&\le& c\|\m{y}^k - \m{z}^k\| + \|\m{z}^{k} - \m{x}^*\| . \nonumber
\end{eqnarray}
The bounds $\epsilon^{k-1} \le cd_{k-1}$ and $\|\m{y}^k - \m{z}^k\| \le d_k$
in  (\ref{h202}) and (\ref{h201}) give
\begin{equation}\label{h500}
\; \quad \quad
\|\m{y}^{k+1} - \m{x}^*\| \le cd_k + \|\m{z}^k - \m{x}^*\|
\mbox{ and }
\frac{\|\m{x}_i^{k+1}-\m{x}_i^{*}\|}{\sqrt{\Gamma_i^{k+1}}} \le
c \left( d_{k-1} + d_k + \|\m{z}^k - \m{x}^*\| \right).
\end{equation}
Combine (\ref{h200}) and (\ref{h500}) to obtain
\begin{eqnarray}
E_{k+1}^* &=&
\rho \|\m{y}^{k+1} -  \m{x}^* \|^2_{\m{P}}
+ \frac{1}{\rho} \|\g{\lambda}^{k+1} - \tilde{\g{\lambda}}^{k+1} \|^2 + 
\alpha \sum_{i=1}^m \frac{\|\m{x}_i^{k+1} - \m{x}_i^* \|^2}{\Gamma_i^{k+1}}
\nonumber \\
&\le& c( d_k + d_{k-1} + \|\m{z}^k - \m{x}^*\|)^2.
\label{1-ok}
\end{eqnarray}
On the other hand, by Lemma~$\ref{L-key-lemma3}$ and the fact that
$(\m{x}^*, \tilde{\g{\lambda}}^k) \in \C{W}^*$, we have
\begin{eqnarray}
E_k^* &\ge& \rho \|\m{y}^{k+1} -  \m{x}^* \|^2_{\m{P}}
+ \frac{1}{\rho} \|\g{\lambda}^{k+1} - \tilde{\g{\lambda}}^k \|^2 + 
\alpha \sum_{i=1}^m \frac{\|\m{x}_i^{k+1} - \m{x}_i^* \|^2}{\Gamma_i^k}
\label{h300}\\
&&  + \rho\alpha(1-\alpha)
(\|\m{y}^k - \m{z}^{k}\|_{\m{Q}}^2 + \|\m{Az}^{k} - \m{b}\|^2)
+ \sigma \alpha R^k + 2 \alpha \Delta^k \nonumber \\
&\ge& E_{k+1}^* + cd_k^2 +  \mu \|\m{z}^k - \m{x}^*\|^2, \nonumber
\end{eqnarray}
where the last inequality is due to the definition (\ref{dk}) of $d_k$ and the
strong convexity of $\Phi$:
\[
\Delta^k := \Phi(\m{z}^k) - \Phi(\m{x}^*)
+ (\tilde{\g{\lambda}}^k, \m{A} \m{z}^k - \m{b})
\ge \frac{\mu}{2} \|\m{z}^k - \m{x}^*\|^2.
\]
Finally, we replace $k$ by $k-1$ in (\ref{h300}),
and then use again (\ref{h300}) followed by (\ref{1-ok}) to obtain
\[
E_{k-1}^* \ge E_{k}^* + c d_{k-1}^2 \ge E_{k+1}^* + c(d_k^2 + d_{k-1}^2)
+ \mu \|\m{z}^k - \m{x}^*\|^2 \ge (1+c) E_{k+1}^*,
\]
which completes the proof.
\end{proof}

\section{Numerical Experiments}
\label{numerical}
In this section, we compare the performance of I-ADMM to that of
two different algorithms: (a)~linearized ADMM with one linearization
step for each subproblem and (b)~exact ADMM where the subproblems
are solved either by the conjugate gradient method or by an explicit formula.
The conjugate gradient method was well suited for the quadratic subproblems
in our test set.
We tried using a small number of conjugate gradient iterations to solve
a subproblem, such as 5 iterations starting from the solution computed in
the previous iteration, but found that the scheme did not converge.
Instead we continued the CG iteration until the norm of the gradient
was at most $10^{-6}$.
The one-step ADMM algorithm that we used in (a) for the experiments was the
generalized BOSVS algorithm from \cite{HagerZhang19}.
This algorithm is globally convergent, and although the penalty term was not
linearized, it was possible to quickly solve the subproblems that arise in the
imaging test problems using a fast Fourier transform, as explained in 
\cite{chy13}.

The problems in our experiments were the same image reconstruction problems
used in \cite{HagerZhang19}.
One image employs a blurred version of the well-known Cameraman
image of size $256 \times 256$,
while the second set of test problems,
which arise in partially parallel imaging (PPI), are found in \cite{chy13}.
The observed PPI data, corresponding to 3 different images, are
denoted data~1, data~2, and data~3.
These image reconstruction problem can be formulated as
\begin{equation}\label{3block-obj}
\min_{\m{u}} \; \frac{1}{2} \|\m{Fu} - \m{f}\|^2
+ \alpha \|\m{u}\|_{TV} + \beta \|\g{\Psi} \tr \m{u}\|_1,
\end{equation}
where $\m{f}$ is the given image data, $\m{F}$ is a matrix describing
the imaging device, $\|\cdot\|_{TV}$ is the total variation norm,
$\| \cdot \|_1$ is the $\ell_1$ norm,
$\g{\Psi}$ is a wavelet transform, and
$\alpha>0$ and $\beta >0$ are weights.
The first term in the objective is the data fidelity term,
while the next two terms are for regularization;
they are designed to enhance edges and increase image sparsity.
In our experiments, $\g{\Psi}$ is a normalized Haar wavelet 
with four levels and $\g{\Psi} \g{\Psi} \tr = I$.
The problem (\ref{3block-obj}) is equivalent to 
\begin{equation}\label{3block-equiv}
\min_{(\m{u}, \m{v}, \m{w})} \; \frac{1}{2} \|\m{Fu} - \m{f}\|^2
+ \alpha \|\m{w}\|_{1,2} + \beta \|\m{v}\|_1 \; \mbox{subject to }
\m{Bu} = \m{w}, \; \g{\Psi} \tr \m{u} = \m{v},
\end{equation}
where $\m{Bu} = \nabla \m{u}$ and $(\nabla \m{u})_i$
is the vector of finite differences in the image along the
coordinate directions at the i-th pixel in the image,
$\|\m{w}\|_{1,2} = \sum_{i=1}^N \| (\nabla \m{u})_i\|_2$,
and $N$ is the total number of pixels in the image.

The problem (\ref{3block-equiv}) has the structure appearing in
(\ref{Prob})--(\ref{ProbM}) with $h_1 := 0$,
$f_1 (\m{u})=$ $1/2  \|\m{Fu} - \m{f}\|^2$,
$ h_2(\m{w}) = \|\m{w}\|_{1,2}$,
$f_2 := 0$,
$h_3(\m{v}) = \|\m{v}\|_1$,
$f_3 := 0$,
\[
\begin{array}{c}
\m{A}_1 = \left( 
\begin{array}{l}
 \m{B} \\
 \g{\Psi} \tr
\end{array}
 \right),
\quad
\m{A}_2 = \left( 
\begin{array}{r}
-\m{I} \\
 \m{0}
\end{array}
\right), \quad
\m{A}_3 =
\left( 
\begin{array}{r}
 \m{0} \\
-\m{I}
\end{array}
\right),
\quad \mbox{and} \quad
\m{b} =
\left( 
\begin{array}{r}
\m{0} \\
\m{0}
\end{array}
\right).
\end{array}
\]
The algorithm parameters
$\alpha^l$ and $\delta^l$ were chosen as in (\ref{AG_linesearch}).
Since $f_2 = f_3 = 0$,
the second and third subproblems are solved in closed form,
due to the simple structure of $h_2$ and $h_3$.
Only the first subproblem is solved inexactly.
At iteration $k$, the solution of this subproblem approximates the
solution of
\[
\min_{\m{u}} \;
 \frac{1}{2} \|\m{Fu} - \m{f}\|^2 +
\frac{\rho}{2} \| \m{Bu} - \m{w}^k + \rho^{-1} \g{\lambda}^k\|^2
+  \frac{\rho}{2} \|  \g{\Psi} \tr \m{u} - \m{v}^k
+ \rho^{-1} \g{\mu}^k\|^2,
\]
where $\g{\lambda}^k$ and $\g{\mu}^k$ are the Lagrange multipliers
at iteration $k$ for the constraints
$\m{Bu} =$ $\m{w}$ and $\g{\Psi}\tr \m{u} =$ $\m{v}$ respectively.
Details of the experimental setup can be found in \cite{HagerZhang19}.
The $i$-th block diagonal element of $\m{Q}$ was taken to be a multiple
$\gamma_i$ of the identity $\m{I}$.
According to the assumptions of IADM, $\gamma_1$ should be chosen
large enough that $\gamma_1 \m{I} - \m{A}_1\tr \m{A}_1$ is positive
semidefinite, where
\[
\m{A}_1\tr \m{A}_1 = \m{B}\tr \m{B} + \g{\Psi} \g{\Psi}\tr.
\]
However, a closer inspection of the global convergence proof reveals
that for convergence, it is sufficient to have
\begin{equation}\label{***}
\gamma_1 \|\m{z}^k - \m{y}^k\|^2 \ge \|\m{A}_1(\m{z}^k - \m{y}^k)\|^2
\end{equation}
in each iteration.
Instead of computing the largest eigenvalue of $\m{A}_1\tr \m{A}_1$,
we simply start with $\gamma_1 = 4$ and multiply it by a constant factor
(3 in the experiments) whenever the inequality (\ref{***}) is violated.
Within a finite number of iterations, $\gamma_1$ is large enough that
(\ref{***}) always holds.

Figure~\ref{error_plots} plots the logarithm of the relative objective error
versus the CPU time for the four test problems and the three methods.
Note that the first few iterations of the exact ADMM for Data~3 have error
greater than one, so they missing from the plot.
Observe that I-ADMM performed better than the exact ADMM and the
exact ADMM was generally better than the single linearization step,
except possibly in the initial iterations where the high accuracy of
the exact ADMM was not helpful.
I-ADMM gave better performance both initially and asymptotically.

\begin{figure}
\centering
{\rm (a)} {\includegraphics[width=.45\textwidth]{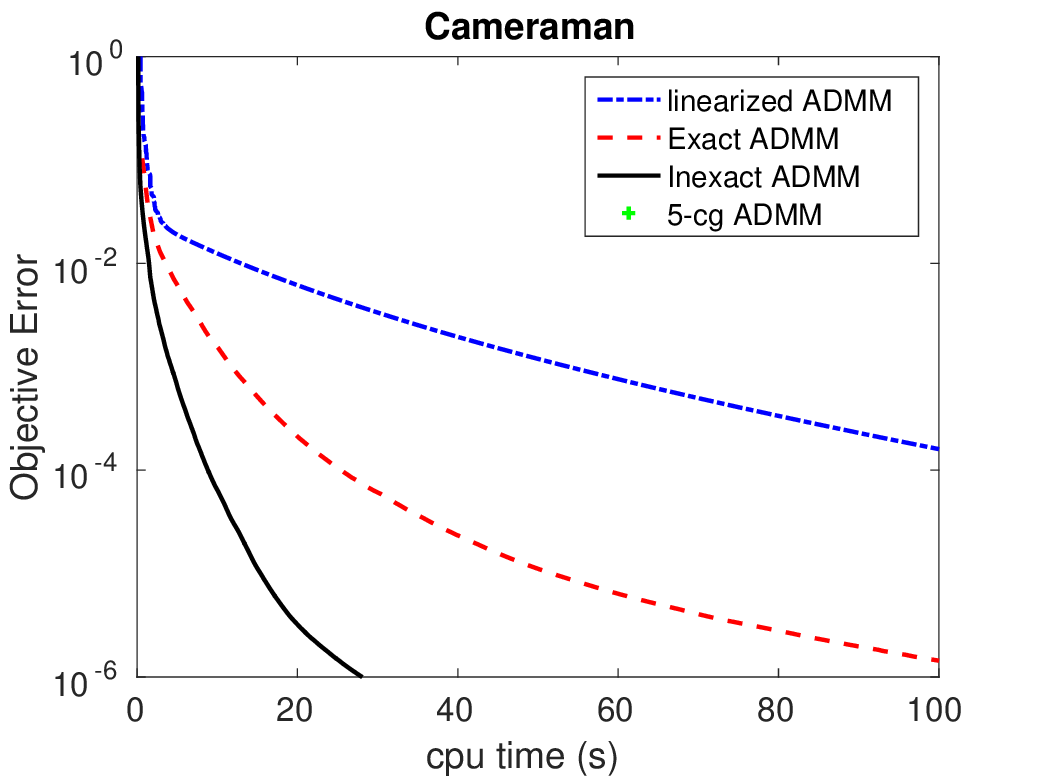}}
{\rm (b)} {\includegraphics[width=.45\textwidth]{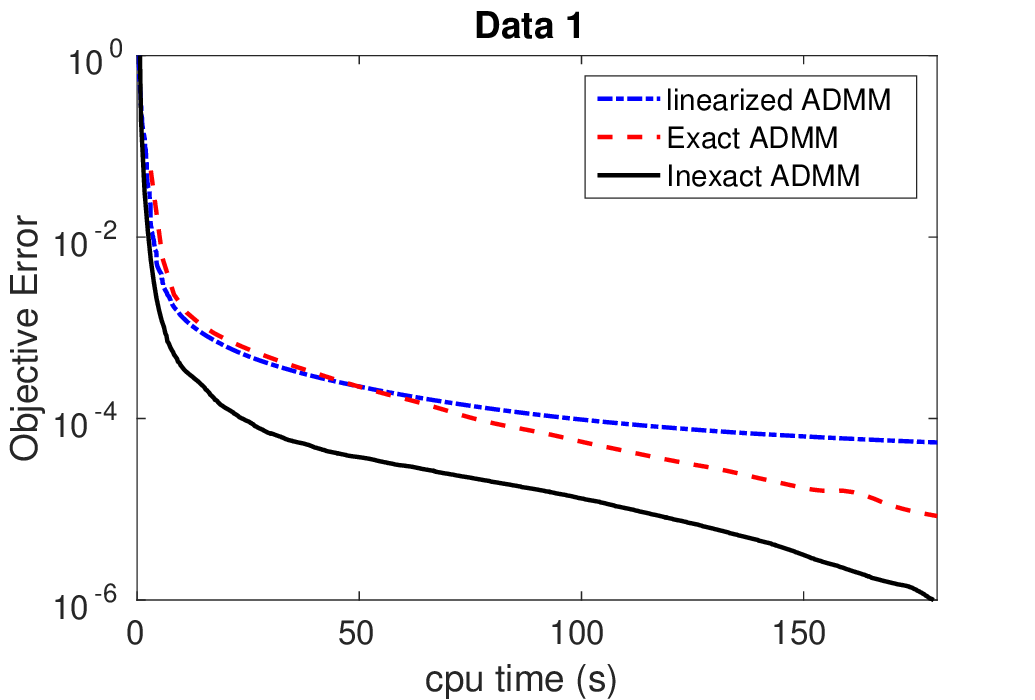}}
\\[.2in]
{\rm (c)} {\includegraphics[width=.45\textwidth]{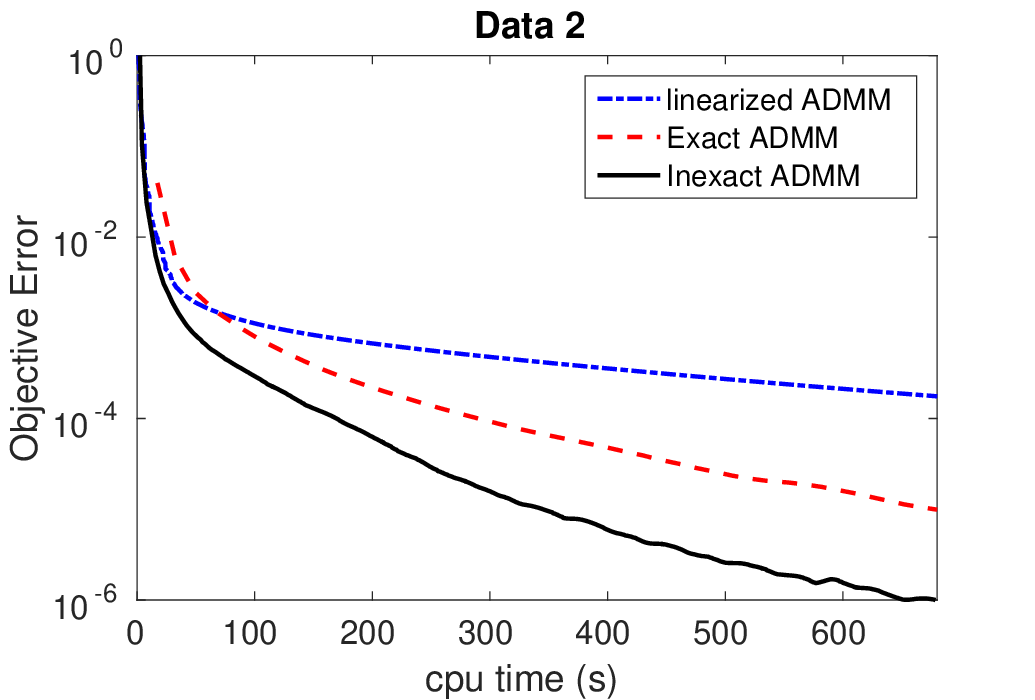}}
{\rm (d)} {\includegraphics[width=.45\textwidth]{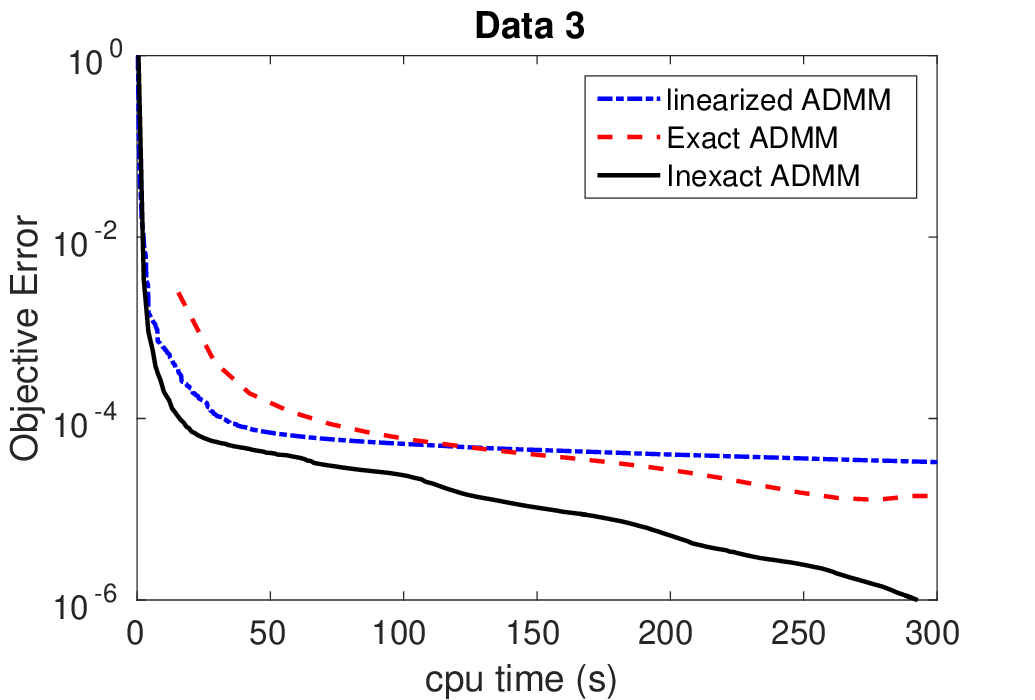}}
\caption{Base-10 logarithm of the relative objective error versus CPU time
for the test problems.
}
\label{error_plots}
\end{figure}

%
\section{Conclusion}
\label{conclusion}
We propose an inexact alternating direction method of multipliers, I-ADMM, for
solving separable convex linearly constrained optimization problems,
where the objective is the sum of smooth and relatively simple nonsmooth terms.
The nonsmooth terms could be infinite, so the algorithms and analysis
include problems with additional convex constraints.
This I-ADMM emanates for our earlier work \cite{chy13, hyz16, HagerZhang19}
on a Bregman Operator Splitting algorithm with a variable stepsize (BOSVS).
The subproblems are solved using an accelerated gradient algorithm
that employs a linearization of both the smooth objective and the penalty term.
We establish an $\C{O}(1/k)$ ergodic convergence rate
for I-ADMM, where $k$ is the iteration number. 
Under a strong convexity assumption, the convergence rate improves to
$\C{O}(1/k^2)$ for both ergodic and nonergodic iterates.
When an error bound condition holds, 2-step linear convergence is
established for nonergodic iterates.
The convergence rates for I-ADMM are
consistent with convergence rates obtained for exact ADMM schemes such as
those in \cite{HanSunZhang17, HeYuan12, HongLuo2017, RMS13,
ShefiTeboulle14, YangHan16}.
As observed in the numerical experiments, an advantage of the inexact scheme
is that the computing time to achieve a given error tolerance is reduced,
when compared to the the exact iteration,
since the accuracy of the subproblem solutions are adaptively increased
as the iterates converge so as to achieve the same convergence rates as
the exact algorithms.
%
\section{Appendix: Proofs for the Global Convergence Analysis}
\label{appendix}
For reference, given a smooth function
$\Psi: \mathbb{R}^n \rightarrow \mathbb{R}$ and a convex
real-valued function $h$ with convexity modulus $\mu$,
the first-order optimality condition for a minimizer $\m{u}$
of the sum $\Psi (\cdot) + h (\cdot)$ is given by
\begin{equation}\label{first-order}
h(\m{u}) + (\mu/2)\|\m{w}-\m{u}\|^2 \le
h (\m{w}) + \nabla \Psi (\m{u})(\m{w} - \m{u})
\end{equation}
for every $\m{w} \in \mathbb{R}^{n}$.

{\bf Proof of Lemma~\ref{lem-AG-Conv}.}
By the definition
$\m{a}_i^l = (1-\alpha^l) \m{a}_i^{l-1} + \alpha^l \m{u}_i^l$, we have
\[
\langle \nabla  f_i({\bar{\m{a}}}_i^l ),
\m{a}_i^l  - {\bar{\m{a}}}_i^l \rangle =
(1 - \alpha^l)
\langle \nabla  f_i({\bar{\m{a}}}_i^l) ,
\m{a}_i^{l-1}  - {\bar{\m{a}}}_i^l \rangle
+ \alpha^l \langle \nabla  f_i({\bar{\m{a}}}_i^l ),
\m{u}_i^l - {\bar{\m{a}}}_i^l \rangle .
\]
Add to this the identity
$f_i({\bar{\m{a}}}_i^l ) = (1-\alpha^l) f_i({\bar{\m{a}}}_i^l ) +
\alpha^l f_i({\bar{\m{a}}}_i^l )$ to obtain
\begin{eqnarray*}
&f_i({\bar{\m{a}}}_i^l ) +
\langle \nabla  f_i({\bar{\m{a}}}_i^l ),
\m{a}_i^l  - {\bar{\m{a}}}_i^l \rangle =& \nonumber \\[.05in]
&(1 - \alpha^l) \left[ f_i({\bar{\m{a}}}_i^l ) +
\langle \nabla  f_i({\bar{\m{a}}}_i^l) ,
\m{a}_i^{l-1}  - {\bar{\m{a}}}_i^l \rangle \right]
+ \alpha^l \left[ f_i({\bar{\m{a}}}_i^l )
+ \langle \nabla  f_i({\bar{\m{a}}}_i^l ),
\m{u}_i^l - {\bar{\m{a}}}_i^l \rangle \right].&
\end{eqnarray*}
By the convexity of $f_i$, it follows that
$f_i({\bar{\m{a}}}_i^l ) +
\langle \nabla  f_i({\bar{\m{a}}}_i^l) ,
\m{a}_i^{l-1}  - {\bar{\m{a}}}_i^l \rangle \le f_i(\m{a}_i^{l-1})$.
Hence, 
\[
f_i({\bar{\m{a}}}_i^l ) +
\langle \nabla  f_i({\bar{\m{a}}}_i^l ) ,
\m{a}_i^l  - {\bar{\m{a}}}_i^l \rangle  \le
(1 - \alpha^l) f_i(\m{a}_i^{l-1}) + 
\alpha^l \left[ f_i({\bar{\m{a}}}_i^l )
+ \langle \nabla  f_i({\bar{\m{a}}}_i^l ),
\m{u}_i^l - {\bar{\m{a}}}_i^l \rangle \right] .
\]
Adding and subtracting any $\m{u} \in \mathbb{R}^{n_i}$ in the last
term, and then exploiting the convexity of $f_i$ gives
\begin{eqnarray*}
f_i({\bar{\m{a}}}_i^l )
+ \langle \nabla  f_i({\bar{\m{a}}}_i^l ),
\m{u}_i^l - {\bar{\m{a}}}_i^l \rangle &=&
\left[ f_i({\bar{\m{a}}}_i^l )
+ \langle \nabla  f_i({\bar{\m{a}}}_i^l ),
\m{u} - {\bar{\m{a}}}_i^l \rangle \right] +
\langle \nabla  f_i({\bar{\m{a}}}_i^l ),
\m{u}_i^l - \m{u} \rangle \\
&\le& f_i(\m{u}) + \langle \nabla  f_i({\bar{\m{a}}}_i^l ),
\m{u}_i^l - \m{u} \rangle .
\end{eqnarray*}
Therefore,
\begin{equation} \label{1234}
\quad \quad f_i({\bar{\m{a}}}_i^l ) +
\langle \nabla  f_i({\bar{\m{a}}}_i^l ),
\m{a}_i^l  - {\bar{\m{a}}}_i^l \rangle \le
(1 - \alpha^l) f_i (\m{a}_i^{l-1} ) +
\alpha^l [ f_i(\m{u}) + \langle \nabla
f_i({\bar{\m{a}}}_i^l ), \m{u}_i^l  - \m{u} \rangle ].
\end{equation}

Now by the line search condition in Step~1a of Algorithm~\ref{3} and
then by (\ref{1234}), we have
\begin{eqnarray*}
L_i^k (\m{a}_i^l) &=& f_i (\m{a}_i^l) +
\frac{\rho}{2} \|\m{A}_i \m{a}_i^l - \m{b}_i^k+
\g{\lambda}^k/\rho\|^2 + h_i  (\m{a}_i^l) \nonumber \\
&\le& f_i({\bar{\m{a}}}_i^l) +
\langle \nabla f_i({\bar{\m{a}}}_i^l), \m{a}_i^l -{\bar{\m{a}}}_i^l \rangle +
\frac{(1-\sigma)\delta^l}{2 \alpha^l} \|\m{a}_i^l - {\bar{\m{a}}}_i^l\|^2
\nonumber
\\[.05in]
&&\quad \quad \quad +
\frac{\rho}{2} \|\m{A}_i \m{a}_i^l - \m{b}_i^k + \g{\lambda}^k/\rho\|^2 +
h_i  (\m{a}_i^l) \nonumber \\[.05in]
&\le&
(1 - \alpha^l) f_i (\m{a}_i^{l-1} ) +
\alpha^l f_i(\m{u}) + \alpha^l \langle \nabla
f_i({\bar{\m{a}}}_i^l ), \m{u}_i^l  - \m{u} \rangle +
\frac{(1-\sigma)\delta^l}{2 \alpha^l} \|\m{a}_i^l - {\bar{\m{a}}}_i^l\|^2
\nonumber
\\[.05in]
&& \quad \quad \quad +
\frac{\rho}{2} \|\m{A}_i \m{a}_i^l - \m{b}_i^k + \g{\lambda}^k/\rho\|^2 +
h_i  (\m{a}_i^l). \label{1235}
\end{eqnarray*}
Next, we utilize the definitions of $\m{a}_i^l$ and $\bar{\m{a}}_i^l$, and
the convexity of both $h_i$ and the norm term to obtain
\begin{eqnarray}
L_i^k (\m{a}_i^l) &\le&
(1 - \alpha^l) f_i (\m{a}_i^{l-1} ) +
\alpha^l [ f_i(\m{u}) +  \langle \nabla  f_i({\bar{\m{a}}}_i^l ), 
\m{u}_i^l  - \m{u} \rangle] +
\frac{(1-\sigma)\delta^l}{2 \alpha^l} \|\m{a}_i^l - {\bar{\m{a}}}_i^l\|^2
\nonumber \\
&&  \quad + (1-\alpha^l) \left(\frac{\rho}{2}
\|\m{A}_i \m{a}_i^{l-1} - \m{b}_i^k  + \g{\lambda}^k/\rho \|^2 
+ h_i  (\m{a}_i^{l-1}) \right) \nonumber \\
&& \quad + \alpha^l \left(\frac{ \rho}{2} \|\m{A}_i \m{u}_i^l - \m{b}_i^k
+ \g{\lambda}^k/\rho \|^2 + h_i  (\m{u}_i^l) \right) \nonumber\\
&=&  (1-\alpha^l) \left( f_i (\m{a}_i^{l-1} ) +
\frac{\rho}{2} \|\m{A}_i \m{a}_i^{l-1} - \m{b}_i^k +
\g{\lambda}^k/\rho \|^2 +  h_i  (\m{a}_i^{l-1})  \right) \nonumber\\ 
&&\quad + \alpha^l [ f_i(\m{u}) + \langle \nabla
 f_i({\bar{\m{a}}}_i^l ),\m{u}_i^l  - \m{u} \rangle]
+ \frac{(1-\sigma)\delta^l \alpha^l}{2}
\|\m{u}_i^l - \m{u}_i^{l-1}\|^2   \nonumber\\ 
&&\quad + \alpha^l \left(\frac{ \rho}{2}
\|\m{A}_i \m{u}_i^l - \m{b}_i^k +
\g{\lambda}^k/\rho \|^2 + h_i  (\m{u}_i^l) \right) \nonumber \\
&=&  (1-\alpha^l) L_i^k (\m{a}_i^{l-1} ) 
+ \alpha^l [ f_i(\m{u}) +
\langle \nabla  f_i({\bar{\m{a}}}_i^l ),\m{u}_i^l  - \m{u} \rangle]
\nonumber \\
&& \quad
+ \frac{(1-\sigma)\delta^l \alpha^l}{2 } \|\m{u}_i^l - \m{u}_i^{l-1}\|^2
+  \alpha^l \left(\frac{ \rho}{2} \|\m{A}_i \m{u}_i^l - \m{b}_i^k
+ \g{\lambda}^k/\rho\|^2 + h_i  (\m{u}_i^l) \right).  \label{qwer}
\end{eqnarray}
By (\ref{first-order}),
the first-order optimality condition for $\m{u}_i^l$ in Step~1a is
\begin{eqnarray}
&&h_i (\m{u}_i^l) + (\mu_i/2) \|\m{u} - \m{u}_i^l\|^2 \label{h2} \\
&\le& \langle \nabla P(\m{u}_i^l), (\m{u} - \m{u}_i^l) \rangle
+ \rho \langle \m{u}_i^l - \m{y}_i^k ,
\bar{\m{Q}}_i(\m{u} - \m{u}_i^l) \rangle + h_i(\m{u}),
\nonumber
\end{eqnarray}
where
\[
\nabla P(\m{u}_i^l) = \nabla f_i (\bar{\m{a}}_i^l)
+\delta^l (\m{u}_i^l - \m{u}_i^{l-1})
+\rho (\m{A}_i\tr (\m{A}_i \m{u}_i^l - \m{b}_i^k + \g{\lambda}^k/\rho).
\]
Multiply (\ref{h2}) by $\alpha^l$ and add to (\ref{qwer}) to obtain
(after some algebra):
\begin{eqnarray*}
&L_i^k(\m{a}_i^l)
+ (\alpha^l \mu_i/2)\|\m{u}-\m{u}_i^l\|^2 \le& \\
&(1-\alpha^l) L_i^k (\m{a}_i^{l-1} ) + \alpha^l
L_i^k(\m{u})  + \frac{ \delta^l \alpha^l }{2}
( \|\m{u} - \m{u}_i^{l-1}\|^2 - \|\m{u} - \m{u}_i^l \|^2)& \\
&-\frac{\sigma\delta^l \alpha^l}{2 } \|\m{u}_i^l - \m{u}_i^{l-1}\|^2  
 - \frac{\alpha^l \rho}{2}\|\m{A}_i  (\m{u} -\m{u}_i^l)\|^2
 + \rho \alpha^l(\m{u}^l - \m{y}_i^k) \tr  \bar{\m{Q}}_i (\m{u} - \m{u}_i^l).&
\end{eqnarray*}
Hence, for any $\m{u} \in \mathbb{R}^{n_i}$ we have
\begin{eqnarray}
&L_i^k (\m{a}_i^l) - L_i^k(\m{u})
+ (\alpha^l \mu_i/2)\|\m{u}-\m{u}_i^l\|^2 \le&
\label{asdf} \\
&(1-\alpha^l) (L_i^k (\m{a}_i^{l-1}) - L_i^k(\m{u}))
+ \frac{ \delta^l \alpha^l }{2} ( \|\m{u} - \m{u}_i^{l-1}\|^2
- \|\m{u} - \m{u}_i^l \|^2)  \nonumber \\
& -\frac{\sigma\delta^l \alpha^l}{2 } \|\m{u}_i^l - \m{u}_i^{l-1}\|^2 
- \frac{\alpha^l \rho}{2}\|\m{A}_i  (\m{u} -\m{u}_i^l)\|^2
+ \rho \alpha^l(\m{u}^l - \m{y}_i^k) \tr  \bar{\m{Q}}_i (\m{u} - \m{u}^l). &
\nonumber
\end{eqnarray}

From the definition of $\gamma^l$ in Algorithm~\ref{3}, it follows that
$(1-\alpha^l) \gamma^l = \gamma^{l-1}$ with the convention that
$\gamma^0 = 0$ (since $\alpha^1 = 1)$.
Hence, for any sequence $d^l$, $l \ge 0$, we have
\begin{equation}\label{id1}
\sum_{l=1}^j \left( \gamma^l d^l - (1-\alpha^l)\gamma^l d^{l-1}
\right) =
\sum_{l=1}^j \left( \gamma^l d^l - \gamma^{l-1} d^{l-1} \right) =
\gamma^j d^j.
\end{equation}
Suppose that $d^l \ge 0$ for each $l$.
By assumption, $\xi^l = \gamma^l \delta^l \alpha^l$ is nonincreasing;
since $\alpha^1 = 1$ and $\gamma^1 = 1/\delta^1$, it follows that
$\xi^1 = 1$, and we have
\begin{eqnarray}
\sum_{l=1}^j \xi^l \left( d^l - d^{l-1} \right) &=&
d^1 - d^0 + \sum_{l=2}^j \xi^l \left( d^l - d^{l-1} \right)
\label{id2} \\
&\ge& d^1 - d^0 + \sum_{l=2}^j \left( \xi^l d^l - \xi^{l-1} d^{l-1} \right) =
\xi^j d^j - d^0 . \nonumber
\end{eqnarray}
We now multiply (\ref{asdf}) by $\gamma^l$ and sum over
$l$ between 1 and $L$.
Exploiting the identity (\ref{id1}) with
$d^l = L_i^k (\m{a}_i^l) - L_i^k(\m{u})$ and
(\ref{id2}) with $d^l = \|\m{u}_i^l - \m{u}\|^2$, we obtain
\begin{eqnarray}
&L_i^k(\m{u}) -  L_i^k (\m{a}_i^{L}) \ge
\frac{1}{2 \gamma^L}
(\xi^{L} \|\m{u} - \m{u}_i^{L} \|^2 -   \|\m{u} - \m{u}_i^{0}\|^2 ) &
\label{zxcv} \\
&
+ \frac{\sigma}{2 \gamma^L} \sum_{l=1}^{L} 
\xi^l  \|\m{u}_i^l - \m{u}_i^{l-1}\|^2
+ \frac{\rho}{2 \gamma^L}
\sum_{l=1}^{L} (\gamma^l \alpha^l) \|\m{A}_i  (\m{u} -\m{u}_i^l)\|^2 &
\nonumber\\
&
+ \frac{\rho}{ \gamma^L}
\sum_{l=1}^{L} (\gamma^l \alpha^l) (\m{u}_i^l - \m{y}_i^k) \tr  \bar{\m{Q}}_i 
(\m{u}_i^l - \m{u})
+ \frac{\mu_i}{2\gamma^L} \sum_{l=1}^L (\gamma^l \alpha^l )
\|\m{u}-\m{u}_i^l\|^2 . &
\nonumber
\end{eqnarray} 

Next, we multiply the definition
$\m{a}_i^j = (1-\alpha^j) \m{a}_i^{j-1} + \alpha^j \m{u}_i^j$
by $\gamma^j$ and sum over $j$ between 1 and $l$.
Again, exploiting the identity
$(1-\alpha^j) \gamma^j = \gamma^{j-1}$ yields
\begin{equation}\label{convex2}
\m{a}_i^l =
\frac{1}{\gamma^l} \sum_{j=1}^{l} (\gamma^j \alpha^j) \m{u}_i^j. 
\end{equation}
Since $\alpha^j \gamma^j = \gamma^j - \gamma^{j-1}$, it follows that
\[
\gamma^l = \sum_{j=1}^l \alpha^j \gamma^j .
\]
Consequently, $\m{a}_i^l$ is a convex combination of
$\m{u}_i^1$ through $\m{u}_i^l$.
Since $\|\m{A}_i  (\m{u} - \m{w})\|^2$,
$(\m{w} - \m{y}_i^k) \tr  \bar{\m{Q}}_i (\m{w} - \m{u}),$
and $\|\m{u}-\m{w}\|^2$ are convex functions of $\m{w}$,
Jensen's inequality can be applied to each of the last three terms in
(\ref{zxcv}).
For example, we have
\[
\frac{1}{\gamma^L}\sum_{l=1}^L (\gamma^l \alpha^l ) \|\m{u}-\m{u}_i^l\|^2 \ge
\|\m{u} - \m{a}_i^L\|^2 .
\]
The net effect of Jensen's inequality is to delete the summation and
replace $\m{u}_i^l$ by $\m{a}_i^L$ in the last three terms of (\ref{zxcv})
to obtain
\begin{eqnarray}
&L_i^k(\m{u}) -  L_i^k (\m{a}_i^{L})
\ge \frac{1}{2 \gamma^L}
(\xi^{L} \|\m{u} - \m{u}_i^{L} \|^2 -   \|\m{u} - \m{u}_i^{0}\|^2 ) &
\label{generalu} \\
&
+ \frac{\rho}{2} \|\m{A}_i  (\m{u} -\m{a}_i^L)\|^2
+ \rho (\m{a}_i^L - \m{y}_i^k) \tr  \bar{\m{Q}}_i (\m{a}_i^L - \m{u})
+ \frac{\mu_i}{2} \|\m{u} - \m{a}_i^L\|^2 
+ \frac{\sigma}{2 \gamma^L} \sum_{l=1}^{L}
\xi^l  \|\m{u}_i^l - \m{u}_i^{l-1}\|^2 .
&
\nonumber
\end{eqnarray}
Hence, after discarding the $\m{u}_i^L$ term, we have
\begin{eqnarray}
\bar{L}_i^k(\m{u}) -  \bar{L}_i^k (\m{a}_i^{L})
&\ge& \frac{-1}{2 \gamma^L} \|\m{u} - \m{u}_i^{0}\|^2
+ \frac{\sigma}{2 \gamma^L} \sum_{l=1}^{L} 
\xi^l  \|\m{u}_i^l - \m{u}_i^{l-1}\|^2
\label{generalu-1} \\
&& + \frac{\rho}{2} \|\m{A}_i  (\m{u} -\m{a}_i^L)\|^2
 + \rho (\m{a}_i^L - \m{y}_i^k) \tr  \bar{\m{Q}}_i (\m{a}_i^L - \m{u})
\nonumber \\
&& + \frac{\rho}{2} \| \m{u} - \m{y}_i^k \|^2_{\bar{\m{Q}}_i}
 - \frac{\rho}{2} \| \m{a}_i^L - \m{y}_i^k \|^2_{\bar{\m{Q}}_i}
+ \frac{\mu_i}{2} \|\m{u} - \m{a}_i^L\|^2 \nonumber \\
&=& \frac{-1}{2 \gamma^L} \|\m{u} - \m{u}_i^{0}\|^2
+ \frac{\sigma}{2 \gamma^L} \sum_{l=1}^{L} 
\xi^l  \|\m{u}_i^l - \m{u}_i^{l-1}\|^2  \nonumber\\
&& + \frac{\rho}{2} \|\m{A}_i  (\m{u} -\m{a}_i^L)\|^2
 + \frac{\rho}{2} \| \m{u} - \m{a}_i^L \|_{\bar{\m{Q}}_i}^2
+ \frac{\mu_i}{2} \|\m{u} - \m{a}_i^L\|^2 \nonumber \\
&=& \frac{-1}{2 \gamma^L} \|\m{u} - \m{u}_i^{0}\|^2
+ \frac{\sigma}{2 \gamma^L} \sum_{l=1}^{L} 
\xi^l  \|\m{u}_i^l - \m{u}_i^{l-1}\|^2 \nonumber \\
&&+ \frac{\rho}{2} \| \m{u} - \m{a}_i^L \|_{\m{Q}_i}^2
+ \frac{\mu_i}{2} \|\m{u} - \m{a}_i^L\|^2.
\nonumber
\end{eqnarray}
Since $\bar{L}_i^k(\m{u}) -  \bar{L}_i^k (\m{a}_i^{L}) \le 0$ when
$\m{u} = \bar{\m{x}}_i^k$ and since $\m{u}_i^0 = \m{x}_i^k$,
the proof is complete.
\bigskip

{\bf Proof of Lemma~\ref{L-key-lemma3}.}
Let us insert in (\ref{generalu})
$L = l_i^k$, the terminating value for $l$ in Algorithm~\ref{3}.
In addition,
substituting $\m{u} = \m{x}_i^*$, $\xi^l = 1$, $\m{a}_i^L = \m{z}_k^k$, and
$\m{u}_i^L = \m{x}_i^{k+1}$, we obtain
\begin{equation}\label{jumppoint}
L_i^k(\m{x}_i^*) - F_i^k(\m{z}_i^k) \ge
\frac{1}{2\Gamma_i^k} \left(
\|\m{x}_{e,i}^{k+1}\|^2 - \|\m{x}_{e,i}^k\|^2 \right)
+ \frac{\sigma}{2 \Gamma_i^k} \sum_{l=1}^{l_i^k} 
\|\m{u}_i^l - \m{u}_i^{l-1}\|^2 ,
\end{equation}
where $\m{x}_e^k = \m{x}_i^k - \m{x}_i^*$ and
\[
F_i^k(\m{z}_i^k) =
L_i^k(\m{z}_i^k) + \frac{\rho}{2} \|\m{A}_i( \m{z}_i^k - \m{x}_i^*) \|^2
+ \rho (\m{z}_i^k - \m{y}_i^k) \tr \bar{\m{Q}}_i (\m{z}_i^k - \m{x}_i^*)
+ \frac{\mu_i}{2}\|\m{z}_i^k - \m{x}_i^*\|^2.
\]
Now, by the definition of $L_i^k$, a Taylor expansion yields
\begin{eqnarray}
& L_i^k (\m{x}_i^*)  - L_i^k (\m{z}_i^{k}) = &
\label{zzz9} \\
& f_i(\m{x}_i^*) + h_i(\m{x}_i^*) - f_i(\m{z}_i^{k}) - h_i(\m{z}_i^{k})
- \rho\langle \m{A}_i\m{x}_i^{*}
- \m{b}_i^k + \g{\lambda}^k/\rho, \m{A}_i\m{z}_{e,i}^{k}\rangle 
- \frac{\rho}{2} \|\m{A}_i\m{z}_{e,i}^{k}\|^2,
\nonumber
\end{eqnarray}
where $\m{z}_{e}^k = \m{z}^k - \m{x}^*$.
Observe that
\begin{eqnarray*}
\m{A}_i \m{x}_i^* - \m{b}_i^k &=&
-\m{A}_i \m{z}_{e,i}^k + \m{A}_i\m{z}_i^k - \m{b}
+ \sum_{j < i} \m{A}_j \m{z}_{j}^{k} + \sum_{j > i} \m{A}_j \m{y}_{j}^k  \\
&=& 
-\m{A}_i \m{z}_{e,i}^k
+ \sum_{j \le i} \m{A}_j \m{z}_{e,j}^{k} + \sum_{j > i} \m{A}_j \m{y}_{e,j}^k .
\end{eqnarray*}
where $\m{y}_{e}^k = \m{y}^k - \m{x}^*$.
With this substitution in (\ref{zzz9}), we deduce that
\begin{eqnarray}
& L_i^k (\m{x}_i^*)  - L_i^k (\m{z}_i^{k})
- \frac{\rho}{2} \|\m{A}_i\m{z}_{e,i}^{k}\|^2 = &
\label{zzz2} \\
& - \Delta_i^k - \rho \left\langle \sum_{j \le i} \m{A}_j \m{z}_{e,j}^{k} +
\sum_{j > i} \m{A}_j \m{y}_{e,j}^k 
+ \g{\lambda}_e^k/\rho, \; \m{A}_i \m{z}_{e,i}^{k} \right\rangle,&
\nonumber
\end{eqnarray}
where
$\g{\lambda}_e^k = \g{\lambda}^k - \g{\lambda}^*$, and
\begin{equation} \label{Delta-ik}
\Delta_i^k =
f_i(\m{z}_i^{k}) + h_i(\m{z}_i^{k}) -  f_i(\m{x}_i^*) - h_i(\m{x}_i^*)
+ (\g{\lambda}^*, \m{A}_i\m{z}_{e,i}^{k}).
\end{equation}
Hence, we have
\begin{eqnarray*}
& L_i^k (\m{x}_i^*)- F_i^k (\m{z}_i^k) = L_i^k (\m{x}_i^*) - L_i^k (\m{z}_i^k)
- \frac{\rho}{2} \| \m{A}_i \m{z}_{e,i}^{k}\|^2 - 
\rho (\m{z}_i^k - \m{y}_i^k) \tr \bar{\m{Q}}_i  \m{z}_{e,i}^k
-\frac{\mu_i}{2} \|\m{z}_{e,i}^k\|^2& \\
&= - \Delta_i^k
- \rho \left\langle \sum_{j \le i} \m{A}_j \m{z}_{e,j}^{k} +
\sum_{j > i} \m{A}_j \m{y}_{e,j}^k 
+ \g{\lambda}_e^k/\rho, \; \m{A}_i \m{z}_{e,i}^{k} \right\rangle 
- \rho (\m{z}_i^k - \m{y}_i^k) \tr \bar{\m{Q}}_i  \m{z}_{e,i}^k
- \frac{\mu_i}{2} \|\m{z}_{e,i}^k\|^2&
\end{eqnarray*}
where $\Delta_i^k$ is defined in (\ref{Delta-ik}).
Combining this with the lower bound (\ref{jumppoint}) gives
\begin{eqnarray}
&- \rho \left\langle \sum_{j \le i} \m{A}_j \m{z}_{e,j}^{k} +
\sum_{j > i} \m{A}_j \m{y}_{e,j}^k
+ \g{\lambda}_e^k/\rho, \; \m{A}_i \m{z}_{e,i}^{k} \right\rangle - \Delta_i^k \ge &
\label{combine} \\
&\frac{1}{2\Gamma_i^k} (\|\m{x}_{e,i}^{k+1}\|^2 - \|\m{x}_{e,i}^k\|^2)
+ \frac{\sigma}{2\Gamma_i^k}
\sum_{l = 1}^{l_i^k} \|\m{u}_{ik}^{l} - \m{u}_{ik}^{l-1}\|^2& \nonumber \\
&+ \rho (\m{z}_i^k - \m{y}_i^k) \tr \bar{\m{Q}}_i  \m{z}_{e,i}^k
+ \frac{\mu_i}{2} \|\m{z}_{e,i}^k\|^2.&
\nonumber
\end{eqnarray}

Focusing on the left side of (\ref{combine}), observe that
\begin{eqnarray}
\sum_{j \le i} \m{A}_j \m{z}_{e,j}^{k} +
\sum_{j >i} \m{A}_j \m{y}_{e,j}^k &=&
\sum_{j =1 }^m \m{A}_j (\m{z}_{j}^{k} - \m{x}_j^*) + 
\sum_{j >i} \m{A}_j (\m{y}_{j}^k - \m{z}_j^{k}) \nonumber \\
&=& \m{A}\m{z}^{k}-\m{b} + 
\sum_{j >i} \m{A}_j (\m{y}_{j}^k - \m{z}_j^{k}) \label{xyz}
\end{eqnarray}
since $\m{Ax}^* = \m{b}$.
Let $\tau_i^k$ denote the first part of the right side of (\ref{combine});
that is
\[
\tau_i^k = \frac{1}{2\Gamma_i^k} (\|\m{x}_{e,i}^{k+1}\|^2 - \|\m{x}_{e,i}^k\|^2)
+ \frac{\sigma}{2\Gamma_i^k}
\sum_{l = 1}^{l_i^k} \|\m{u}_{ik}^{l} - \m{u}_{ik}^{l-1}\|^2
\]
With this notation and with the simplification (\ref{xyz}),
(\ref{combine}) becomes
\begin{eqnarray}\label{xzz}
 && -\rho \left\langle \m{A}_i \m{z}_{e,i}^{k},
\m{A}\m{z}^{k}-\m{b} + \g{\lambda}_e^k/\rho +
\sum_{j >i} \m{A}_j (\m{y}_{j}^k - \m{z}_j^{k}) \right\rangle \\
&\ge& \tau_i^k + \Delta_i^k +
\rho (\m{z}_i^k - \m{y}_i^k) \tr \bar{\m{Q}}_i \m{z}_{e,i}^k \nonumber \\
&=& \tau_i^k + \Delta_i^k +
\rho (\m{z}_i^k - \m{y}_i^k)\tr (\m{Q}_i - \rho \m{A}_i \tr \m{A}_i)
\m{z}_{e,i}^k
+ \frac{\mu_i}{2} \|\m{z}_{e,i}^k\|^2,
 \nonumber 
\end{eqnarray}
which gives
\begin{eqnarray}\label{xzz-1}
 && -\rho \left\langle \m{A}_i \m{z}_{e,i}^{k},
\m{A}\m{z}^{k}-\m{b} + \g{\lambda}_e^k/\rho +
\sum_{j \ge i} \m{A}_j (\m{y}_{j}^k - \m{z}_j^{k}) \right\rangle \\
&\ge& \tau_i^k + \Delta_i^k +
\rho (\m{z}_i^k - \m{y}_i^k) \tr \m{Q}_i \m{z}_{e,i}^k
+ \frac{\mu_i}{2} \|\m{z}_{e,i}^k\|^2.
\nonumber 
\end{eqnarray}

We will sum the inequality (\ref{xzz-1}) over $i$ between 1 and $m$.
Let $\m{r}^k = \m{Az}^{k} - \m{b}$, the residual for the linear system.
As in (\ref{xyz}), it follows that
\begin{equation}\label{1st}
\sum_{i = 1}^m \left\langle \m{A}_i \m{z}_{e,i}^{k}, \m{r}^k
+ \g{\lambda}_e^k/\rho \right\rangle =
\left\langle \m{r}^k, \m{r}^k + \g{\lambda}_e^k/\rho \right\rangle.
\end{equation}
Also, observe that
\[
\sum_{j \ge i} \m{A}_j (\m{y}_{j}^k - \m{z}_j^{k}) =
\sum_{j=1}^m \m{A}_j (\m{y}_{j}^k - \m{z}_j^{k}) -
\sum_{j =1}^{i-1} \m{A}_j (\m{y}_{j}^k - \m{z}_j^{k}),
\]
with the convention that the sum from $j =1$ to $j = 0$ is 0.
%
Hence, we have
\begin{eqnarray}  \label{2nd}
& &\sum_{i=1}^m \left( \left\langle \m{A}_i \m{z}_{e,i}^{k},
\sum_{j \ge i} \m{A}_j (\m{y}_{j}^k - \m{z}_j^{k}) \right\rangle
- (\m{y}_i^k - \m{z}_i^k) \tr \m{Q}_i \m{z}_{e,i}^k \right)  \\
&= & \left\langle \m{r}^k, 
\sum_{j=1}^m \m{A}_j \m{w}_{j} \right\rangle
- \sum_{i=1}^m  \left\langle  \m{z}_{e,i}^{k},
 \sum_{j=1}^{i-1} \m{A}_i \tr \m{A}_j \m{w}_{j} +
\m{Q}_i\m{w}_i  \right\rangle  \nonumber \\
& = &\left\langle \m{r}^k, 
\sum_{j=1}^m \m{A}_j \m{w}_{j} \right\rangle
- (\m{z}_e^{k})\tr \m{M} \m{w}, \nonumber
\end{eqnarray}
where $\m{M}$ is defined in (\ref{m-def}) and 
$\m{w} = \m{y}^k -\m{z}^{k}$.
We sum (\ref{xzz-1}) over $i$ between $1$ and $m$ and utilize
 (\ref{1st}) and (\ref{2nd}) to obtain
\begin{eqnarray}
&(\m{y}_e^k)\tr\m{Mw}
- \frac{1}{\rho} \left( \langle \m{r}^k , \g{\lambda}_e^k \rangle
+ \sum_{i=1}^m (\tau_i^k + \Delta_i^k)\right)&
\label{close} \\
&\ge \m{w}\tr\m{Mw} +
\left\langle
\m{r}^{k} ,
\m{r}^{k} + \sum_{j=1}^m \m{A}_j \m{w}_j
\right\rangle
+ \sum_{i=1}^m \frac{\mu_i}{2} \|\m{z}_{e,i}^k\|^2.  & \nonumber
\end{eqnarray}
Observe that $\m{M} + \m{M}\tr - \m{Q} \succeq \m{A} \tr \m{A}$, since
$\m{Q}_i \succeq \m{A}_i \tr \m{A}_i$ for all $1 \le i \le m$.  
Consequently, we have
\begin{eqnarray*}
\m{w}\tr\m{Mw} &=& \frac{1}{2} \m{w}\tr(\m{M} + \m{M}\tr)\m{w} =
\frac{1}{2} \m{w}\tr(\m{M} + \m{M}\tr - \m{Q})\m{w} +
\frac{1}{2}\m{w}\tr\m{Qw} \\
&\ge & \frac{1}{2}\left\| \sum_{i=1}^m \m{A}_i \m{w}_i\right\|^2
+ \frac{1}{2} \m{w}\tr\m{Qw},
\end{eqnarray*}
which implies that
\begin{eqnarray*}
&\m{w}\tr\m{Mw} +
\left\langle
\m{r}^{k} ,
\m{r}^{k} + \sum_{j=1}^m \m{A}_j \m{w}_j
\right\rangle \ge &
\frac{1}{2} \left(
\m{w}\tr\m{Qw} + \|\m{r}^k\|^2 +
\left\| \m{r}^k + \sum_{i=1}^m \m{A}_i \m{w}_i\right\|^2 \right).
\end{eqnarray*}
Hence, it follows from (\ref{close}) that
\begin{eqnarray}
&(\m{y}_e^k)\tr\m{Mw}
- \frac{1}{\rho} \left( \langle \m{r}^k , \g{\lambda}_e^k \rangle
+ \sum_{i=1}^m (\tau_i^k + \Delta_i^k) \right)
\label{closer}& \\
&\ge
\frac{1}{2} \left( \|\m{w}\|_{\m{Q}}^2 + \|\m{r}^k\|^2 \right)
+ \sum_{i=1}^m \frac{\mu_i}{2} \|\m{z}_{e,i}^k\|^2.& \nonumber
\end{eqnarray}

Let $\m{P} = \m{MQ}^{-1} \m{M} \tr$ and recall that $\m{w} =$
$\m{y}^{k} - \m{z}^{k}$.
By the definition of $\m{y}^{k+1}$ and $\g{\lambda}^{k+1}$ in Step~3
of Algorithm~\ref{ADMMcommon}, we have
\begin{eqnarray*}
& \| \m{y}_e^k \|_{\m{P}}^2 -
\| \m{y}_e^{k+1}\|_{\m{P}}^2 +
\frac{1}{\rho^2} \| \g{\lambda}_e^k\|^2
- \frac{1}{\rho^2} \| \g{\lambda}_e^{k+1}\|^2 = & \\
& \| \m{y}_e^k \|_{\m{P}}^2 -
\| \m{y}_e^k - \alpha  \m{M}^{-\sf T} \m{Q}
\m{w} \|_{\m{P}}^2  +
\frac{1}{\rho^2} (\| \g{\lambda}_e^k\|^2
-   \| \g{\lambda}_e^k + \alpha \rho\m{r}^{k}\|^2) = & \\
& 2 \alpha (\m{y}_e^k)\tr\m{Mw}
- \alpha^2  \|\m{w}\|_{\m{Q}}^2 -
\frac{2 \alpha}{\rho} \langle \m{r}^k , \g{\lambda}_e^k \rangle
- \alpha^2 \|\m{r}^k\|^2. &
\end{eqnarray*}
On the right side of this equality, we utilize (\ref{closer})
multiplied by $2\alpha$ to conclude that
\begin{eqnarray}
& \| \m{y}_e^k \|_{\m{P}}^2 -
\| \m{y}_e^{k+1}\|_{\m{P}}^2 +
\frac{1}{\rho^2} (\| \g{\lambda}_e^k\|^2
- \| \g{\lambda}_e^{k+1}\|^2) - \frac{2\alpha}{\rho}
\sum_{i=1}^m (\tau_i^k + \Delta_i^k) \ge&
\label{drfc} \\
& \alpha(1-\alpha) (\|\m{y}^k - \m{z}^k\|_{\m{Q}}^2 + \|\m{r}^k\|^2)
+ \alpha \sum_{i=1}^m \mu_i \|\m{z}_{e,i}^k\|^2.& \nonumber
\end{eqnarray}
So, by the definition of $\tau_i^k$,
the identity $\Delta^k = \sum_{i=1}^m \Delta_i^k$,
the inequality (\ref{drfc}), and the relation
$\Gamma_i^{k+1} \ge \Gamma_i^{k}$ in Steps~1b and 1c,
it follows that (\ref{Ek-decay}) holds.
\bigskip

{\bf Proof of Theorem \ref{L-glob-thm3}.}
Since $\xi^l = \delta^l \alpha^l \gamma^l = 1$
when the parameters $\delta^l$ and $\alpha^l$ are chosen
according to either (\ref{AG_constant}) or (\ref{AG_linesearch}),
Lemma~\ref{L-key-lemma3} can be utilized.
For any $p > 0$, we sum the decay property of Lemma~\ref{L-key-lemma3} 
to obtain
\begin{equation}\label{Ej2}
\quad \quad \,
E_j \ge E_{j+p} + c \sum_{k=j}^{j+p-1} \left(
\|\m{y}^k - \m{z}^{k}\|_{\m{Q}}^2
+ \|\m{Az}^{k} - \m{b}\|^2 + R^k \right), 
\end{equation}
where $c = \alpha \min\{\sigma, \rho (1-\alpha)\} > 0$.
Let $p$ tend to $+\infty$.
Since $\m{Q}$ is positive definite and $R^k \ge 0$, 
it follows from (\ref{Ej2}) that
\begin{equation}\label{L-lim12}
\lim_{k \to \infty} \|\m{y}^{k}- \m{z}^{k}\|= 0 =
\lim_{k \to \infty} \|\m{Az}^{k}- \m{b}\|.
\end{equation}
Moreover, by the definition of $E_k$ and Lemma~\ref{L-key-lemma3}, we know
$\m{y}^k$ and $\g{\lambda}^k$ are bounded sequences, and by the
first equation in (\ref{L-lim12}), $\m{z}^k$ is also a bounded sequence.
Hence, there exist an infinite sequence $\C{K} \subset \{1, 2, \ldots \}$
and limits $\m{x}^*$ and $\g{\lambda}^*$ such that
\begin{equation}\label{limz}
\lim_{k\in\C{K}} \m{z}^k = \m{x}^* \quad \mbox{and} \quad
\lim_{k\in\C{K}} \g{\lambda}^k = \g{\lambda}^*.
\end{equation}
By the first equation in (\ref{L-lim12}), we have
\begin{equation}\label{limy}
\lim_{k \in \C{K}} \m{y}^k = \m{x}^*.
\end{equation}
By the second equation in (\ref{L-lim12}), $\m{Ax}^* = \m{b}$.
Consequently, by (\ref{limz}) and (\ref{limy}),
\begin{equation}\label{L-lim-b2}
\lim_{k \in \C{K}} \left( \m{A}_i \m{z}_i^{k} -\m{b}_i^k \right) =
\lim_{k \in \C{K}} \left(
\sum_{j \le i }  \m{A}_j \m{z}_j^{k} +
\sum_{j > i } \m{A}_j \m{y}_j^k - \m{b} \right) = \m{Ax}^* - \m{b} = \m{0}
\end{equation}
for all $i \in [1,m]$.

The decay property (\ref{Ej2}) also implies that for each $i$,
\begin{equation}\label{riklim}
\lim_{k \to \infty} r_i^k =
\lim_{k \to \infty} \frac{1}{\Gamma_i^k} \sum_{l=1}^{l_i^k}
\|\m{u}_{ik}^l- \m{u}_{ik}^{l-1}\|^2 = 0,
\end{equation}
where $l_i^k$ is the terminating value of $l$ in Step~1b, which
exists since the parameters $\delta^l$ and $\alpha^l$
in Algorithm~$\ref{3}$ are chosen
according to either $(\ref{AG_constant})$ or $(\ref{AG_linesearch})$.
Combine this with (\ref{L-lim12}) to conclude that the parameter
$\epsilon^k$ in Step~2 of Algorithm~\ref{ADMMcommon} satisfies
\begin{equation}\label{eklim}
\lim_{k \to \infty} \epsilon^k = \lim_{k \to \infty} \psi(\epsilon^k) = 0.
\end{equation}
%
The remainder of the proof is partitioned into two cases depending on whether
the monotone nondecreasing sequence $\Gamma_i^k$
either approaches a finite limit, or tends to infinity.

{\bf Case 1.} For some $i$, $\Gamma_i^k$ approaches a finite limit.
In \cite[pp.~227--228]{HagerZhang19} it is shown that
$\gamma^l \ge l^2 \Theta$ for some constant $\Theta > 0$, independent of $k$.
Since $\Gamma_i^k = \gamma^l$ for some $l$, it follows that
$l_i^k$, the terminating value $l$ in Step~1 of Algorithm~\ref{3},
is uniformly bounded when $\Gamma_i^k$ approaches a finite limit.
By (\ref{riklim}), $\| \m{u}_{ik}^l - \m{u}_{ik}^{l-1} \|$ approaches zero,
where the convergence is uniform in $k$ and $l \in [1, l_i^k]$.
Since $\m{u}_{ik}^0 = \m{x}_i^k$, the triangle inequality and the
uniform upper bound for $l_i^k$ imply that
$\| \m{x}_i^k - \m{u}_{ik}^l \|$ approaches zero,
where the convergence is uniform in $k$ and $l \in [1, l_i^k]$.
Since $\m{a}_{ik}^l$ is a convex combination of
$\m{u}_{ik}^l$ (see (\ref{convex2})) for $0 \le l \le l_i^k$ with $l_i^k$
uniformly bounded and $\| \m{x}_i^k - \m{u}_{ik}^l \|$ approaching zero,
it follows that $\|\m{a}_{ik}^l - \m{x}_i^k\|$ approaches zero.
Since $\m{z}_i^k = \m{a}_{ik}^{l_i^k}$ and since $\m{z}_i^k$ approaches
$\m{x}_i^*$ as $k\in\C{K}$ tends to infinity, we deduce that
\begin{equation}\label{uiklim}
\m{x}_i^* = \lim_{k \in \C{K}} \m{y}_i^k
= \lim_{k \in \C{K}} \m{z}_i^k
= \lim_{k \in \C{K}} \m{x}_i^k
= \lim_{k \in \C{K}} \m{u}_{ik}^l
= \lim_{k \in \C{K}} \m{a}_{ik}^l
= \lim_{k \in \C{K}} \bar{\m{a}}_{ik}^l ,
\end{equation}
%
where the last equality is due to the fact that
$\bar{\m{a}}_{ik}^l$ is a convex combination of
$\m{a}_{ik}^{l-1}$ and $\m{u}_{ik}^{l-1}$.

In (\ref{h2}) we give the first-order optimality condition for
$\m{u}_{ik}^l$.
Taking the limit as $k \in \C{K}$ tends to infinity and utilizing
(\ref{L-lim-b2}) and (\ref{uiklim}), we obtain
\begin{equation}\label{h3}
h_i (\m{x}_i^*) \le h_i(\m{u}) + \langle \nabla f_i(\m{x}_i^*)
+ \m{A}_i\tr \g{\lambda}^*, \m{u}-\m{x}_i^* \rangle
\end{equation}
for every $\m{u} \in \mathbb{R}^{n_i}$.
Since $\m{Ax}^* = \m{b}$ and the first-order optimality conditions are
both necessary and sufficient for optimality in this convex setting,
it would follow that $(\m{x}^*, \g{\lambda}^*) \in \C{W}^*$
if (\ref{h3}) holds for every $i \in [1, m]$.
To show that (\ref{h3}) holds for all $i$, we need to consider
the situation where $\Gamma_i^k$ tends to infinity.

{\bf Case 2.} Suppose that $\Gamma_i^k$ approaches infinity.
Let $\bar{\m{x}}_i^k$ be the minimizer of $\bar{L}_i^k$ defined in
(\ref{barlik}).
Observe that minimizing
$\bar{L}_i^k (\m{u})$ over $\m{u} \in \mathbb{R}^{n_i}$
is equivalent to minimizing a sum of the form
$g(\m{u}) + h_i(\m{u}) + \langle \m{u}, \m{c}^k \rangle$ where
\[
\m{c}^k =
\rho \m{A}_i\tr(\m{A}_i \m{y}_i^k - \m{b}_i^k +
\g{\lambda}^k/\rho) - \rho \m{Q}_i \m{y}_i^k,
\]
and $g(\m{u}) = f_i(\m{u}) + 0.5 \rho \|\m{u} \|_{\m{Q}_i}^2$.
Note that $g$ is smooth and satisfies a strong convexity condition
\begin{equation}\label{strong}
(\m{u} - \m{v})\tr (\nabla g(\m{u}) - \nabla g(\m{v})) \ge
\rho \nu_i \|\m{u} - \m{v}\|^2,
\end{equation}
where $\nu_i > 0$ is the smallest eigenvalue of $\m{Q}_i$.
By the strong convexity of $\bar{L}_i^k$, it has a unique minimizer, and
from the first-order optimality conditions and the strong convexity
condition (\ref{strong}), we obtain the bound
\begin{equation}\label{lipbound}
\|\bar{\m{x}}_i^j - \bar{\m{x}}_i^k\| \le \|\m{c}^j - \m{c}^k\|/(\rho \nu_i).
\end{equation}
Since $\m{z}^k$, $\m{y}^k$, and $\g{\lambda}^k$ are bounded sequences,
it follows that $\bar{\m{x}}_i^k$ is a bounded sequence.
For $k \in \C{K}$, the sequences
$\m{z}^k$, $\m{y}^k$, and $\g{\lambda}^k$ converge to
$\m{x}^*$, $\m{x}^*$, and $\g{\lambda}^*$ respectively and $\m{Ax}^*= \m{b}$,
which implies that
\begin{eqnarray}\label{*}
\m{c}^* &= & \lim_{k\in\C{K}} \m{c}^k =
\m{A}_i\tr \left[ \rho \m{A}_i  \m{x}_i^* + \g{\lambda}^* - \rho \left( \m{b}
- \sum_{j\ne i} \m{A}_j \m{x}_j^* \right) \right]  - \rho \m{Q}_i  \m{x}_i^* \\
& = & \m{A}_i\tr \g{\lambda}^*  - \rho \m{Q}_i  \m{x}_i^*. \nonumber
\end{eqnarray}
%
Consequently, by (\ref{lipbound}),
$\bar{\m{x}}_i^k$ for $k \in \C{K}$ forms a Cauchy
sequence which approaches a limit.

By (\ref{eklim}) and the stopping condition in Algorithm~(\ref{3}),
$\|\m{x}_i^{k} - \m{z}_i^k\|/\sqrt{\Gamma_i^k}$
tends to zero as $k$ tends to infinity.
By (\ref{def-Ek}) and (\ref{Ej2}), $\g{\lambda}^k$ and $\m{y}^k$ are bounded,
which implies that $\m{z}^k$ is bounded by (\ref{L-lim12}).
By (\ref{lipbound}), $\bar{\m{x}}_i^k$ is also bounded.
Since $\Gamma_i^k$ tends to infinity in Case~2 and
$\|\m{x}_i^{k} - \m{z}_i^k\|/\sqrt{\Gamma_i^k}$ tends to zero,
it follows from the triangle inequality and the boundedness of
$\bar{\m{x}}_i^k$ and $\m{z}^k$ that
$\|\m{x}_i^{k} - \bar{\m{x}}_i^k\|/\sqrt{\Gamma_i^k}$ tends to zero as $k$
tends to infinity.
Hence, by Lemma~\ref{lem-AG-Conv}, $\m{z}_i^k = \m{a}_i^{l_i^k}$ approaches
$\bar{\m{x}}_i^k$ as $k$ tends to infinity;
since $\m{z}_i^k$ approaches $\m{x}_i^*$ as $k \in\C{K}$ tends to infinity,
it follows that $\bar{\m{x}}_i^k$ approaches $\m{x}_i^*$ as $k \in \C{K}$
tends to infinity.
Let $\bar{\m{x}}_i^*$ be defined by
\[
\bar{\m{x}}_i^* =
\arg \min_{\m{u}} \{ g(\m{u}) + h_i(\m{u}) +
\langle \m{u}, \m{c}^* \rangle \}.
\]
By (\ref{lipbound}) and the fact that
$\bar{\m{x}}_i^k$ approaches $\m{x}_i^*$ as
$k \in \C{K}$ tends to infinity, we conclude that
$\bar{\m{x}}_i^* = \m{x}_i^*$.
In summary, we have
\begin{eqnarray}
\lim_{k \in \C{K}} \bar{\m{x}}_i^k &=& \m{x}_i^* = \bar{\m{x}}_i^* =
\arg \min_{\m{u}} \{ g(\m{u}) + h_i(\m{u}) +
\langle \m{u}, \m{c}^* \rangle \}. \nonumber \\
&=& \arg \min_{\m{u}} \{ f_i(\m{u})
+ 0.5 \rho \| \m{u}\|_{\m{Q}_i}^2 + h_i (\m{u})
+ \langle \m{A}_i \tr \g{\lambda}^*
- \rho \m{Q}_i \m{x}_i^* , \m{u} \rangle \} .
\label{infinity}
\end{eqnarray}
The first-order optimality conditions for (\ref{infinity}) are
exactly the same as (\ref{h3}).
This shows that (\ref{h3}) holds in Case~1 and Case~2,
and $\m{x}^*$ is an optimal solution of (\ref{Prob})--(\ref{ProbM})
with associated multiplier $\g{\lambda}^*$.

Finally, we need to show that the entire sequence converges.
If $\Gamma_i^k$ is uniformly bounded as in Case~1, then by (\ref{uiklim}),
$\m{x}_i^k$ approaches $\m{x}_i^*$ and
$\|\m{x}_i^k - \m{x}_i^*\|^2/\Gamma_i^k$ approaches zero
as $k$ tends to infinity with $k \in \C{K}$.
On the other hand, when
$\Gamma_i^k$ tends to infinity as in Case~2, we showed that
$\|\m{x}_i^k - \bar{\m{x}}_i^k\|^2/\Gamma_i^k$ approaches zero and
$\bar{\m{x}}_i^k$ approaches $\m{x}_i^*$ when $k \in \C{K}$ tends to infinity.
Hence, $\|\m{x}_i^k - \m{x}_i^*\|^2/\Gamma_i^k$ approaches zero when
$k \in \C{K}$ tends to infinity.
Thus in Case~1 and Case~2,
$\|\m{x}_i^k - \m{x}_i^*\|^2/\Gamma_i^k$ approaches zero as $k \in \C{K}$ tends
to infinity.
By the definition of $E_k$ in (\ref{Ek-decay}), $E_k$ tends to zero as
$k \in \C{K}$ tends to infinity.
Letting $j$ tend to infinity in (\ref{Ej2}) with $j \in \C{K}$,
it follows that $E_j$ approaches zero, while the right side
of (\ref{Ej2}) shows that the entire sequence $(\m{y}^k, \g{\lambda}^k)$
approaches $(\m{x}^*, \g{\lambda}^*)$.
By (\ref{L-lim12}), the $\m{z}^k$ sequence also approaches $\m{x}^*$.
This completes the proof.
\newpage

\end{document}